\theoremstyle{plain}
\newtheorem{lema}{Lemma}[section]
\newtheorem{prop}[lema]{Proposition}
\newtheorem{teo}[lema]{Theorem}
\newtheorem*{intro1}{Theorem \ref{main1}}
\newtheorem{coro}[lema]{Corollary}
\theoremstyle{remark}
\newtheorem{obs}[lema]{Remark}
\theoremstyle{definition}
\newtheorem{defi}[lema]{Definition}
\newtheorem{ej}[lema]{Example}
\newcommand{\kker}{\textrm{ker}}
\newcommand{\Deck}{\textrm{Deck}}
\newcommand{\Fix}{\textrm{Fix}}
\newcommand{\ee}{\textbf{E}}
\newcommand{\hh}{\mathcal{H}}
\newcommand{\kp}{\mathcal{K}}
\newcommand{\x}{\mathcal{X}}
\renewcommand{\langle}{[}
\renewcommand{\rangle}{]}
\def\Z{\mathbb{Z}}
\begin{document}

\title[$\boldsymbol{G}$-colorings of posets]{$\boldsymbol{G}$-colorings of posets, coverings and presentations of the fundamental group}

\author[J.A. Barmak]{Jonathan Ariel Barmak}
\author[E.G. Minian]{Elias Gabriel Minian}

\address{Departamento  de Matem\'atica--IMAS\\
 FCEyN, Universidad de Buenos Aires\\ Buenos
Aires, Argentina}

\email{jbarmak@dm.uba.ar}
\email{gminian@dm.uba.ar}

\begin{abstract}
We introduce the notion of a coloring of a poset, which
consists of a labeling of the edges in its Hasse
diagram by elements in a given group $G$. We use $G$-colorings
to describe the covering maps of posets and present a new method based on colorings to obtain concrete and simple presentations of the fundamental group of polyhedra.
\end{abstract}

\subjclass[2010]{57M05, 57M10, 06A11, 55U10, 18B35}

\keywords{Covering maps, fundamental group, simplicial complexes, posets, finite topological spaces.}

\maketitle

\section{Introduction} \label{sectionintro}
%This paper is part of a larger project \cite{BM2}. It deals with basic concepts of algebraic topology, such as covering maps and fundamental group, and it is mainly based on the following simple idea. Instead of working with simplicial coverings of polyhedra, one can regard regular CW-complexes as posets (the posets of their cells), or equivalently, as  $A$-spaces, and work with coverings of posets. The advantage of using poset coverings is that they can be handled combinatorially in a different and more tractable way than their simplicial analogues. This allows us to develop new methods and tools, which are based on the interaction between the topology and the combinatorics of posets, to study and compute classical invariants of CW-complexes, such as the low-dimensional homotopy groups.

 The classical edge-path group $\mathcal{E}(K,v_0)$ of a simplicial complex $K$
describes combinatorially the fundamental group of $K$
in terms of paths in its $1$-skeleton. This concept can be translated
into the context of posets resulting in a description of $\pi _1
(X,x_0)$ by edge-paths in the Hasse diagram of the poset $X$ \cite{BM1}. In this article we use such a description just as a
starting point. We introduce the notion of a coloring of a poset, which
is a labeling $\ee (X) \to G$ of the edges in the Hasse
diagram of $X$ by elements in a given group $G$, and use $G$-colorings
to study covering maps. As a consequence we obtain a concrete presentation of the fundamental group of
$X$, more suitable than the description given by
edge-paths. Our classification of coverings in terms of $G$-colorings 
provides a new insight into the theory of coverings of polyhedra and it is the key point in this theory. 
% We apply our results on $G$-colorings to study and compute the homology of regular coverings and the lower homotopy groups of complexes.

 There is a well-known and close relationship between the homotopy
theory of polyhedra and partially ordered sets. To
each simplicial complex $K$ one can associate the face poset $\x (K)$
and for each poset $X$ one can construct the order complex $\kp (X)$.
The combinatorics of posets can be used to study topological properties of complexes by means of these two functors. Examples of this
interaction are Quillen's work on the poset of $p$-subgroups of a
finite group \cite{Qui} and Chari's approach to Forman's discrete
Morse theory \cite{Cha,For} (see also \cite{Koz, Min}).  In the same direction,
the interplay between the combinatorics of posets and the topology of
polyhedra has been used in \cite{BM} to investigate simple homotopy
types of complexes and in \cite{Bar3} to give an alternative proof and
applications of Quillen's Theorem A for posets. Any poset can be seen
as a topological space, more precisely as an Alexandroff space (or
$A$-space for short), without necessity of using the functors $\x$ and
$\kp$: the open sets of $X$ are its order ideals. McCord proved that
the topology of such spaces is closely related to the topology of their associated complexes $\kp (X)$. 
Concretely, there is weak homotopy equivalence $\kp(X)\to
X$ and in particular these two spaces have the same homology and homotopy
groups \cite{Mcc}.  A poset can also be regarded as a category with at
most one morphism between any two objects (the order complex $\kp(X)$
is just the classifying space of the category $X$). This provides an
alternative way to understand the connection between topological and combinatorial
properties.

In section \ref{colorings} we characterize regular coverings of posets in terms of colorings.
The class of \textit{admissible} and \textit{connected}
$G$-colorings plays an important role in
this theory. These colorings classify the
normal subgroups of the fundamental group of the poset whose quotients  are
isomorphic to $G$. Concretely, we prove the following result.

\begin{intro1}
Let $X$ be a connected locally finite poset, $x_0\in X$ and $G$ a group. There
exists a correspondence between the set of equivalence classes of
admissible connected $G$-colorings of $X$ and the set of normal
subgroups $N\triangleleft \pi_1 (X,x_0)$ such that $\pi_1(X,x_0)/ N$
is isomorphic to $G$.
\end{intro1}

In particular there is a direct connection between $G$-colorings of
$X$ and equivalence classes of regular coverings of $X$ with deck
transformation group isomorphic to $G$.  In Theorem \ref{main2} we
give  the explicit construction of the corresponding covering.

In Section \ref{presentacion} we use colorings to find alternative presentations of the fundamental group. We exhibit various examples and applications of this new characterization. For instance, we deduce a generalization of van Kampen's theorem (Theorem \ref{nuestrovk}). We also characterize, in terms of colorings, the posets with abelian fundamental group.

In Section \ref{maps} we use colorings to study maps between the fundamental groups and in the last section of the paper we consider a
combinatorial problem related with boards on surfaces.

\section{Preliminaries}

In this section we recall the basic notions on $A$-spaces, their relationship with posets and simplicial complexes, and the description of their fundamental group in terms of edge-paths. For more details we refer the reader to \cite{Bar2,BM1,Mcc,Sto}.

A preorder is a set with a reflexive and transitive relation. Such a set is a poset if the relation is also antisymmetric. An $A$-space is a topological space in which arbitrary intersections of open sets are open. Finite topological spaces and, more generally, locally finite spaces, are examples of $A$-spaces. A locally finite space is a topological space in which every point has a finite neighborhood. There is a natural correspondence between $A$-spaces and preorders. Given an $A$-space $X$, for each point $x$ in $X$ let $U_x$ be the intersection of all the open sets containing $x$. This is the smallest open set which contains $x$. The preorder associated to the $A$-space $X$ has the same underlying set and the relation is given by $x\le y$ if $x\in U_y$. Conversely, given a preorder $\le$ on a set $X$, the topology corresponding to this relation is the one generated by the subsets $U_x=\{y\in X \ | \ y\le x\}$, for every $x\in X$. A function between $A$-spaces is continuous if and only if it is order-
preserving. Note that if $X$ is an $A$-space, $\{U_x\}_{x\in X}$ is a basis for the topology. Any $A$-space is locally contractible since the sets $U_x$ are contractible. It is easy to see that there is a homotopy which is the identity for $t<1$ and it is the constant $x$ for $t=1$. In particular, any $A$-space has a universal cover. Given an $A$-space $X$, the closed sets of $X$ form another topology on the underlying set of $X$, called the opposite topology. The preorder associated to this topology is the opposite order of $X$. This space is denoted by $X^{op}$. Note that a map $f:X\to Y$ between $A$-spaces is continuous if and only if the induced map $f^{op}:X^{op}\to Y^{op}$, which coincides with $f$ in the underlying sets, is continuous. If $X$ is an $A$-space, the closure of a point $x$ in $X$ is denoted by $F_x$. Note that $F_x^{X}=\{y\in X,\ x\leq y \}=(U_x^{X^{op}})^{op}$. The notations $F_x^{X}$ and $U_x^{X}$ will be used when we need to emphasize the space $X$ where these subsets are 
considered. The \textit{star} of a point $x$ in an $A$-space $X$ is $C_x=U_x\cup F_x$. We denote respectively $\hat{U}_x$, $\hat{F}_x$ and $\hat{C}_x$ the reduced sets $U_x\smallsetminus \{x\}$, $F_x\smallsetminus \{x\}$ and $C_x\smallsetminus \{x\}$.

Recall that a topological space $X$ is said to be $T_0$ if for any two points $x,y\in X$ there is an open set which contains one and only one of them. This is the unique separation axiom that we will work with. Note that if an $A$-space is $T_1$ (i.e. if each point is closed), then it is discrete. It is not hard to prove that an $A$-space is $T_0$ if and only if the corresponding preorder is a poset. 

A finite $T_0$-space is a finite poset. A locally finite $T_0$-space is a \textit{locally finite poset}, i.e. a poset such that for every element $x$ there are only finitely many elements smaller than $x$. The Hasse diagram of a locally finite $T_0$-space $X$ is the digraph whose vertices are the points of $X$ and whose edges are the pairs $(x,y)$ such that $x\prec y$. Here $x\prec y$ means that $x$ is covered by $y$, i.e. $x<y$ and there is no $z\in X$ such that $x<z<y$. In the graphical representation of the Hasse diagram, instead of drawing the edge $(x,y)$ with an arrow, we simply put $y$ over $x$ (see for example Figure \ref{proy}). Note that a map $f:X\to Y$ between locally finite $T_0$-spaces is continuous if and only if $x\prec x'$ implies $f(x)\le f(x')$. 

In contrast to the case of finite simplicial complexes, it is easy to decide whether two finite spaces are homotopy equivalent or not. The combinatorial description of the homotopy types of finite spaces is due to Stong \cite{Sto}. Given a finite $T_0$-space $X$, a point $x\in X$ is called a \textit{beat point} if it covers a unique element or if it is covered by a unique element. It follows immediately from Stong's ideas that a finite $T_0$-space is contractible if and only if it is possible to remove beat points one by one from $X$ to obtain the space of one point $*$. Moreover, removing a beat point $x$ from a finite $T_0$-space $X$ produces a subspace $X\smallsetminus \{x\}$ homotopy equivalent to $X$. Concretely, one has the following

\begin{prop} {\rm (Stong \cite[Theorem 2]{Sto})} \label{stong}
If $x$ is a beat point of a finite $T_0$-space $X$, $X\smallsetminus \{x\}$ is a strong deformation retract of $X$.
\end{prop}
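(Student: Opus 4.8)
The plan is to exhibit an explicit retraction $r\colon X\to A$, where $A:=X\smallsetminus\{x\}$ and $i\colon A\hookrightarrow X$ denotes the inclusion, such that $i\circ r$ and $\mathrm{id}_X$ are comparable in the pointwise order of maps $X\to X$. The strong deformation retraction will then follow from the comparison principle for $A$-spaces: if $f\le g$ are continuous maps of $A$-spaces, then $f\simeq g$ via an explicit homotopy (of the same sort already used above to show that each $U_x$ is contractible), and this homotopy may be taken to be stationary on any subspace on which $f$ and $g$ coincide.

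A beat point falls into one of two dual cases. Suppose first that $x$ is covered by a unique element $y$. Since $X$ is finite, a straightforward argument with covering chains shows that $y=\min\hat{F}_x$. Let $r$ be the identity on $A$ and put $r(x)=y$. To check that $r$ is continuous it suffices, by the criterion recalled above, to see that $z\prec z'$ implies $r(z)\le r(z')$; the cases that are not immediate are $z\prec x$, where $z<x<y$ gives $r(z)=z\le y=r(x)$, and $x\prec z'$, where $z'\in\hat{F}_x$ forces $y\le z'$, hence $r(x)=y\le z'=r(z')$. Thus $r$ is a continuous retraction, $r\circ i=\mathrm{id}_A$, and $i\circ r\ge\mathrm{id}_X$, since $(i\circ r)(x)=y\ge x$ while $i\circ r$ fixes every other point. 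As $i\circ r$ and $\mathrm{id}_X$ agree on $A$, the comparison principle yields a homotopy from $\mathrm{id}_X$ to $i\circ r$ that is stationary on $A$, that is, a strong deformation retraction of $X$ onto $A$. The other case, in which $x$ covers a unique element, is handled in the same way with $\hat{U}_x$ and its maximum replacing $\hat{F}_x$ and its minimum (now $i\circ r\le\mathrm{id}_X$); alternatively it is obtained from the first case by passing to $X^{op}$.

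I do not expect any real obstacle. The only step that requires some care is the continuity of the comparison homotopy $H$ --- that is, checking that $H^{-1}(U_p)$ is open in $X\times I$ for every basic open $U_p$ --- where the asymmetry of the time parameter (the map jumps at one endpoint only) and the inequality between the two maps must be used together. Everything else is routine bookkeeping with the covering relations of the finite poset $X$.
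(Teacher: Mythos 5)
Your proof is correct; the paper itself gives no proof here (it simply cites Stong), and your argument is the standard one, built on exactly the comparison-homotopy device the paper already invokes in the Preliminaries (the homotopy equal to one map for $t<1$ and to a larger comparable map at $t=1$, stationary where the two maps agree). The identification $y=\min\hat{F}_x$ and the continuity checks are all as they should be, so nothing further is needed.
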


In particular, if $X$ is contractible and we remove beat points $x_1, x_2, \ldots, x_n$, one by one, the subspace $Y$ obtained in this way is also contractible, so we can continue removing beat points to obtain the singleton. Contractible finite $T_0$-spaces correspond to \textit{dismantlable} posets.

The order complex $\kp (X)$ of a $T_0$-$A$-space $X$ is the simplicial complex whose simplices are the non-empty finite chains of $X$. The polyhedron $\kp (X)$ and the $A$-space $X$ do not have in general the same homotopy type, however they do have isomorphic homotopy and homology groups. Moreover, McCord proved \cite{Mcc} that there exists a weak homotopy equivalence $\mu _X: \kp (X) \to X$ (i.e. a continuous map which induces isomorphisms in all the homotopy groups). A continuous map $f:X\to Y$ between $T_0$-$A$-spaces has an associated simplicial map $\kp (f):\kp (X)\to \kp (Y)$ such that $\kp (f)\mu _X=\mu _Y f$. In the other direction, if $K$ is a simplicial complex, or more generally a regular CW-complex,  the face poset $\x (K)$ is the $T_0$-$A$-space which corresponds to the poset of cells of $K$ ordered by inclusion. In this case there exists a weak homotopy equivalence $K\to \x (K)$.
  
It is well-known that the fundamental group of a simplicial complex can be described by means of the edge-path group (see \cite[Section 3.6]{Spa} for more details). The fundamental group of a locally finite $T_0$-space can be described in a similar way. This was developed in \cite{BM1} for finite $T_0$-spaces, but it extends straightforward to locally finite $T_0$-spaces. Let $X$ be a locally finite $T_0$-space. The set of edges of the Hasse diagram of $X$ will be denoted by $\ee (X)$, an \textit{edge-path} from $x$ to $y$ in $X$ is a sequence $(x_0, x_1)(x_1, x_2) \ldots  (x_{n-1}, x_n)$ of ordered pairs such that $(x_i, x_{i+1})\in \ee (X)$ or $(x_{i+1}, x_i)\in \ee (X)$ for every $0\le i<n$ and such that $x_0=x$, $x_n=y$. Note that since $X$ is locally finite, the following statements are equivalent: (1) $X$ is a connected topological space, (2) $X$ is path-connected and (3) for any two points $x,y \in X$ there exists an edge-path from $x$ to $y$. Of course, an edge-path $\xi$ from $x$ to $y$ and an edge-path $\xi '$ from $y$ to $z$ can be concatenated to form an edge-path $\xi \xi'$ from $x$ to $z$. The inverse of an edge-path $\xi=(x_0, x_1) (x_1, x_2) \ldots  (x_{n-1}, x_n)$ is defined as $\xi ^{-1}=(x_{n}, x_{n-1}) (x_{n-1},x_{n-2}) \ldots  (x_{1}, x_0)$. An edge-path $(x_0, x_1) (x_1, x_2) \ldots  (x_{n-1}, x_n)$ is \textit{monotonic} if $(x_i, x_{i+1})\in \ee (X)$ for all $i$ or if $(x_{i+1},x_i)\in \ee (X)$ for all $i$. When the concatenations $\xi=\xi_1 \xi_2 \xi_3 \xi_4$ and $\xi '=\xi_1 \xi_4$ are well-defined, $\xi$ and $\xi '$ are said to be \textit{elementary equivalent} if $\xi _2$ and $\xi_3$ are monotonic. This relation generates an equivalence relation of edge-paths from $x$ to $y$. The class of an edge-path $\xi$ from $x$ to $y$ is denoted by $\langle \xi \rangle$. Given $x_0\in X$ we denote by $\hh (X,x_0)$ the group whose elements are the classes of closed edge-paths at $x_0$, i.e. the edge-paths from $x_0$ to $x_0$, and the product is defined by $\langle \xi \rangle \langle \xi ' \rangle 
= \langle \xi \xi ' \rangle$. Note that this is well defined and the identity $\langle \rangle$ is the class of the empty edge-path. The inverse $\langle \xi \rangle ^{-1}$ of $\langle \xi \rangle$ is $\langle \xi ^{-1} \rangle$.

Note that if $\xi$ and $\xi '$ are two monotonic edge-paths from $x$ to $y$, $\xi _1$ is an edge-path from $x_0$ to $x$ and $\xi_2$ is an edge-path from $y$ to $x_0$ then $\langle \xi _1 \xi \xi_2 \rangle =\langle \xi_1 \xi ' \xi_2 \rangle$.

The group $\hh (X,x_0)$ and the edge-path group $\mathcal{E}(\kp (X), x_0)$ of the simplicial complex $\kp (X)$ are isomorphic. The isomorphism $\phi _X : \hh (X,x_0) \to \mathcal{E}(\kp (X), x_0)$ is defined in \cite{BM1} (see also \cite[pp.24]{Bar2}). An explicit isomorphism $\epsilon _X :\mathcal{E}(\kp (X), x_0)\to \pi_1 (\kp (X), x_0)$ is described in \cite[pp.136]{Spa}. In particular $\hh (X,x_0)$ is isomorphic to $\pi _1 (X,x_0)$ via the isomorphism $\eta _X=(\mu _X)_* \epsilon _X \phi _X : \hh (X,x_0) \to \pi _1 (X, x_0)$. Concretely $\pi _1(X,x_0)$ is isomorphic to the set of closed edge-paths at $x_0$ where two closed edge-paths are equivalent if we can obtain one from the other by replacing a monotonic sub-edge-path by another monotonic edge-path with the same origin and end and where the inverse of an edge-path is given by the edge-path in the opposite direction.

The application $\hh$ is functorial. If $f: X\to Y$ is a continuous map between locally finite $T_0$-spaces and $\xi=(x_0, x_1) (x_1, x_2) \ldots  (x_{n-1}, x_n)$ is a closed edge-path at $x_0$ in $X$, there is a closed edge-path $\xi '$ at $f(x_0)$ in $Y$ which is obtained by concatenation of monotonic edge-paths from $f(x_i)$ to $f(x_{i+1})$ for every $i$. We define $\hh (f)([\xi])=f_*([\xi])=[\xi ']$. It is easy to check that $f_*=\hh (f):\hh (X,x_0) \to \hh (Y, f(x_0))$ is a well defined homomorphism. Moreover, the application $\phi$ above is a natural isomorphism between $\hh$ and $\mathcal{E} \kp$. In particular we have the following

\begin{obs} \label{naturalidad}
Let $f:X\to Y$ be a continuous map between locally finite $T_0$-spaces. Then there is a commutative diagram where the horizontal arrows are isomorphisms
\begin{displaymath}
\xymatrix@C=30pt{ \hh (X,x_0) \ar@{->}^{\phi _X}[r] \ar@{->}^{f_*}[d] & \mathcal{E}(\kp (X),x_0) \ar@{->}^{\epsilon _X}[r] \ar@{->}^{\kp (f)_*}[d] & \pi_1 (\kp (X),x_0) \ar@{->}^{ (\mu _X)_*}[r] \ar@{->}^{\kp (f)_*}[d] & \pi _1 (X, x_0) \ar@{->}^{f_*}[d] \\
								\hh (Y,f(x_0)) \ar@{->}^{\phi _Y}[r] & \mathcal{E}(\kp (Y),f(x_0)) \ar@{->}^{\epsilon _Y}[r] & \pi_1 (\kp (Y),f(x_0)) \ar@{->}^{ (\mu _Y)_*}[r]  & \pi _1 (Y, f(x_0)). \\ }
\end{displaymath}
\end{obs}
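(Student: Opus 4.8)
The plan is to check the three squares one at a time and then paste them together; commutativity of the resulting outer rectangle is exactly the assertion that $\eta_{(-)}=(\mu_{(-)})_*\circ\epsilon_{(-)}\circ\phi_{(-)}$ is a natural transformation $\hh(-)\to\pi_1(-)$. None of the three vertical arrows is new: on the left $f_*=\hh(f)$, in the middle $\kp(f)_*$ is induced by the simplicial map $\kp(f)$ (first on edge-path groups, then on $\pi_1$ of the polyhedron), and on the right $f_*=\pi_1(f)$.

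For the rightmost square I would simply apply the based fundamental group functor to the identity $\kp(f)\,\mu_X=\mu_Y\,f$ of continuous maps recalled in the Preliminaries; functoriality of $\pi_1$ gives $(\mu_Y)_*\kp(f)_*=f_*(\mu_X)_*$ at once. For the middle square I would invoke the naturality of Spanier's isomorphism $\epsilon$ from the edge-path group of a simplicial complex to its fundamental group, which is classical and contained in \cite[Section 3.6]{Spa}. If a self-contained check is wanted: $\kp(f)$ carries an edge-path of $\kp(X)$ to an edge-path of $\kp(Y)$ (each $1$-simplex going to a $1$-simplex, or collapsing to a vertex), and the loop in the polyhedron $\kp(Y)$ obtained by concatenating the affine paths along the image edge-path is the image under $\kp(f)$ of the loop built from the original edge-path, so the two composites agree after passing to homotopy classes.

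For the leftmost square I would recall why $\phi$ is natural (this is already asserted just before the statement). Let $\xi=e_1\cdots e_n$ be a closed edge-path at $x_0$ in the Hasse diagram of $X$, with $e_i$ joining $x_{i-1}$ and $x_i$ and $x_n=x_0$. By construction $\phi_X[\xi]$ is represented by the edge-path of $\kp(X)$ on the vertices $x_0,x_1,\dots,x_n$, consecutive vertices being comparable in $X$ and hence spanning a $1$-simplex of $\kp(X)$ (or coinciding, in which case that step is constant). On the other hand $f_*[\xi]=[\zeta_1\cdots\zeta_n]$, where $\zeta_i$ is a chosen \emph{monotonic} edge-path from $f(x_{i-1})$ to $f(x_i)$; since a monotonic edge-path runs along a chain of $Y$, its vertices span a simplex of $\kp(Y)$, so in $\mathcal{E}(\kp(Y),f(x_0))$ the edge-path associated to $\zeta_i$ is equivalent to the single edge $(f(x_{i-1}),f(x_i))$ (constant when $f(x_{i-1})=f(x_i)$), using the standard fact — the analogue for $\hh$ is recorded above — that an edge-path lying in a single simplex is equivalent to the edge joining its endpoints. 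Hence $\phi_Y(f_*[\xi])=\prod_i[(f(x_{i-1}),f(x_i))]$, which is precisely $\kp(f)_*$ applied to the edge-path on $x_0,\dots,x_n$, i.e.\ $\kp(f)_*(\phi_X[\xi])$.

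I do not expect a genuine obstacle here: the statement is a repackaging of functoriality of $\pi_1$, of the identity $\kp(f)\mu_X=\mu_Y f$, and of the naturality of the two comparison isomorphisms $\phi$ and $\epsilon$. The only point that deserves a word of care is the appearance of degenerate (constant) steps in the naturality argument for $\phi$ when $f$ identifies comparable points of $X$; this is harmless but should be mentioned explicitly.
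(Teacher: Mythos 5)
Your decomposition into three squares is exactly the justification the paper has in mind: the remark is stated without proof, resting on the naturality of $\phi$ (asserted in the preceding sentence), the naturality of Spanier's $\epsilon$, and functoriality of $\pi_1$ applied to the identity $\kp(f)\mu_X=\mu_Y f$ from the Preliminaries. Your explicit check of the left square (collapsing each monotonic edge-path $\zeta_i$ to the single edge $(f(x_{i-1}),f(x_i))$ inside the simplex spanned by its chain, with care for degenerate steps) correctly fills in the only nontrivial detail, so the proposal is correct and essentially coincides with the paper's intended argument.
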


If $B$ is locally finite and $T_0$ and $p:E\to B$ is a covering, then $E$ is also locally finite and $T_0$. In particular if $b_0\in B$ and $e_0\in p^{-1}(b_0)$, the fundamental groups of $E$ and $B$ can be described with the groups $\hh (E,e_0)$ and $\hh (B,b_0)$. In this case the map $p_*: \hh (E, e_0) \to \hh (B, b_0)$ is easy to describe. If $\xi=(e_1,e_2)(e_2,e_3) \ldots (e_{r-1},e_r)$ is an edge-path in $E$, then $p_*(\xi)= (p(e_1),p(e_2))(p(e_2),p(e_3)) \ldots (p(e_{r-1}),p(e_r))$ is also an edge-path in $Y$. The covering $p$ maps edges to edges since $p|_{U_e}:U_e\to U_{p(e)}$ (also $p|_{F_e}:F_e\to F_{p(e)}$) is a homeomorphism for every $e\in E$.
% (see Remark \ref{uxparej} in the Appendix). 
The homomorphism $p_*: \hh (E,e_0)\to \hh (B,b_0)$ is given by $p_*(\langle \xi \rangle)=\langle p_* (\xi) \rangle$. Given an edge-path $\xi$ in $B$ starting in $b_0$, there exists a unique edge-path $\widetilde{\xi}$ in $E$ starting in $e_0$ such that $p_*(\widetilde{\xi})=\xi$. If $\xi$ and $\xi '$ are two equivalent edge-paths 
from $b_0$ to a point $b_1$, then it is clear that both lifts $\widetilde{\xi}$ and $\widetilde{\xi '}$ end in the same point.

The group $\hh Fix(e_0)=p_*(\hh (E,e_0))\leqslant \hh (B,b_0)$ consists of the classes of closed edge-paths at $b_0$ which lift to closed edge-paths at $e_0$.

\begin{prop} \label{epfix}
Let $B$ be a locally finite $T_0$-space, $b_0\in B$, $p:E\to B$ a covering and $e_0\in p^{-1}(b_0)$. The isomorphism $\eta _B=(\mu _B)_* \epsilon _B \phi _B: \hh (B,b_0)\to \pi _1 (B,b_0)$ restricts to an isomorphism $\hh Fix(e_0)\to \Fix(e_0)=p_*(\pi_1(E,e_0))$. 
\end{prop}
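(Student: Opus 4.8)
The plan is to obtain this as a formal consequence of the naturality of $\eta$ recorded in Remark \ref{naturalidad}. Recall from the discussion preceding the statement that $E$ is again a locally finite $T_0$-space, so that $\eta_E\colon\hh(E,e_0)\to\pi_1(E,e_0)$ is defined and, like $\eta_B$, is an isomorphism. Applying Remark \ref{naturalidad} to the continuous map $f=p$ and composing the three horizontal isomorphisms in each row, I would obtain the commutative square
\[
\xymatrix@C=34pt{ \hh(E,e_0)\ar[r]^-{\eta_E}\ar[d]_{p_*} & \pi_1(E,e_0)\ar[d]^{p_*}\\ \hh(B,b_0)\ar[r]^-{\eta_B} & \pi_1(B,b_0), }
\]
whose horizontal arrows are isomorphisms; equivalently, $\eta_B\circ p_*=p_*\circ\eta_E$ as maps $\hh(E,e_0)\to\pi_1(B,b_0)$.

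Next I would perform a one-line diagram chase on the whole subgroup $\hh(E,e_0)$. Using commutativity of the square together with the fact that $\eta_E$ is onto,
\[
\eta_B\big(\hh Fix(e_0)\big)=\eta_B\big(p_*(\hh(E,e_0))\big)=p_*\big(\eta_E(\hh(E,e_0))\big)=p_*\big(\pi_1(E,e_0)\big)=\Fix(e_0),
\]
so $\eta_B$ carries $\hh Fix(e_0)$ onto $\Fix(e_0)$.

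Finally, since $\eta_B$ is an isomorphism from the ambient group $\hh(B,b_0)$ onto $\pi_1(B,b_0)$, its restriction to the subgroup $\hh Fix(e_0)$ is automatically injective; combined with the surjectivity onto $\Fix(e_0)$ just established, this gives that $\eta_B$ restricts to an isomorphism $\hh Fix(e_0)\to\Fix(e_0)$, which is the assertion.

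I do not expect any genuine obstacle: the statement is purely formal once one has the naturality of $\eta$ and the fact that $\eta$ is an isomorphism for every locally finite $T_0$-space, both available from the Preliminaries. The only point meriting an explicit (but immediate) remark is that $E$ itself is a locally finite $T_0$-space, so that Remark \ref{naturalidad} legitimately applies with $f=p$. Alternatively one could argue directly with edge-paths, identifying $\hh Fix(e_0)$ with the classes of closed edge-paths at $b_0$ admitting a closed lift at $e_0$ and matching this, through the lifting correspondence, with $p_*\pi_1(E,e_0)$ — but this essentially reproves naturality in the present case and is no shorter.
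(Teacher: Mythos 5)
Your proposal is correct and is exactly the paper's argument: the authors' entire proof reads ``It follows immediately from the commutativity of the diagram in Remark \ref{naturalidad},'' and your write-up simply spells out that diagram chase (apply the naturality square to $f=p$, push the whole subgroup $\hh(E,e_0)$ around it, and note injectivity is inherited from $\eta_B$). No discrepancy to report.
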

\begin{proof}
It follows immediately from the commutativity of the diagram in Remark \ref{naturalidad}.
%\begin{displaymath}
%\xymatrix@C=30pt{ \hh (E,e_0) \ar@{->}^{\phi _E}[r] \ar@{->}^{p_*}[d] & \mathcal{E}(\kp (E),e_0) \ar@{->}^{\epsilon _E}[r] \ar@{->}^{\kp (p)_*}[d] & \pi_1 (\kp (E),e_0) \ar@{->}^{ (\mu _E)_*}[r] \ar@{->}^{\kp (p)_*}[d] & \pi _1 (E, e_0) \ar@{->}^{p_*}[d] \\
%								\hh (B,b_0) \ar@{->}^{\phi _B}[r] & \mathcal{E}(\kp (B),b_0) \ar@{->}^{\epsilon _B}[r] & \pi_1 (\kp (B),b_0) \ar@{->}^{ (\mu _B)_*}[r]  & \pi _1 (B, b_0). \\ }
%\end{displaymath}
%The first square trivially commutes by definition of $\phi _E$ and $\phi _B$ (see \cite[pp. 24]{Bar2}). The second square commutes by \cite[Corollary 3.6.17]{Spa}. The third also commutes (see \cite{Bar2,Mcc}). Therefore $p_*\eta _E=\eta _Bp_*$.
%Let $\xi$ be an edge-path loop in $b_0$. Then $p_*^{-1}(\langle \xi \rangle)$ is non-empty if and only if $p_*^{-1}(\langle \eta _B (\xi) \rangle)$ is non-empty. Therefore $\langle \xi \rangle \in EPFix(e_0)$ if and only if $\langle \eta _B (\xi) \rangle \in Fix(e_0)$.				
\end{proof} 

%\begin{obs} \label{masquelf}
%The notion of Hasse diagram and the group $\hh (X, x_0)$ still make sense when the space $X$ is not locally finite but any interval $U_x\cap F_y$ is finite. In this case $\hh (X,x_0)$ is also isomorphic to $\pi _1 (X,x_0)$ and the previous results as well as the results of Sections \ref{colorings}, \ref{presentacion} and \ref{maps} remain valid. However we will focus on locally finite spaces and this extension will only be needed in Section \ref{maps}.
%\end{obs}

\section{$G$-colorings and regular coverings}\label{colorings}

%We observed that a covering of a locally finite space is also locally finite. Moreover, a covering of an $A$-space is an $A$-space. In order to study coverings of $A$-spaces it suffices to understand coverings of $T_0$-$A$-spaces (i.e. posets) and in these cases, the covering spaces are also posets. These results are proved in the Appendix, as well as the correspondence between coverings of posets and simplicial coverings.

In this section we introduce the notion of a coloring of a locally finite poset $X$, which is used to classify the normal subgroups of $\pi_1 (X)$. In particular, the colorings of $X$ describe all its regular coverings.

\begin{defi}
Let $X$ be a connected locally finite $T_0$-space and let $G$ be a group. A \textit{$G$-coloring of $X$} is a map $c: \ee (X) \to G$. If $c$ is a $G$-coloring of $X$ and $(x,y)\in \ee (X)$, we define $c(y,x)=c(x,y)^{-1}$. Given a $G$-coloring $c$, there is an induced \textit{weight} map $w$ (also denoted by $w_c$), which associates an element of $G$ to every edge-path of $X$. This map is defined by $$w((x_0, x_1)(x_1, x_2) \ldots (x_{n-1}, x_n))=\prod\limits_{i=0}^{n-1} c(x_i, x_{i+1})=c(x_0,x_1)c(x_1,x_2)\ldots c(x_{n-1},x_n). $$ The weight of the empty edge-path is defined as $1$, the identity of $G$.
  
A $G$-coloring of $X$ is \textit{admissible} if for any $x\le y$ in $X$ and any two monotonic edge-paths $\xi$, $\xi '$ from $x$ to $y$, the weights $w(\xi)$ and $w (\xi ')$ are equal. Let $x_0\in X$. An admissible $G$-coloring $c$ of $X$ induces a group homomorphism $W=W_c:\hh (X, x_0)\to G$ defined by $W(\langle \xi \rangle)=w(\xi)$.  

A $G$-coloring is said to be \textit{connected} if for every $g\in G$ there exists a closed edge-path at $x_0$ whose weight is $g$. When the $G$-coloring is admissible, this is equivalent to saying that $W: \hh (X,x_0)\to G$ is an epimorphism. Note that this definition is independent of the choice of the base point $x_0$. The motivation of the term ``connected" for such a coloring is the space $E(c)$ which appears in Theorem \ref{main2}.
\end{defi}

\begin{defi} \label{defequi}
Two $G$-colorings $c,c'$ of $X$ are said to be \textit{equivalent} if there exists  an automorphism $\varphi : G\to G$ and an element $g_x\in G$ for each $x$, such that $$c'(x,y)=\varphi (g_x c(x,y) g_y^{-1})$$ for every $(x,y)\in \ee (X)$. In this case we write $c\sim c'$.  
\end{defi}

It is easy to see that this is an equivalence relation in the set of $G$-colorings of $X$. If $c\sim c'$ and $\xi$ is an edge path from $x$ to $y$ in $X$ then, with the notation of the last definition,  $w_{c'}(\xi)=\varphi (g_x w_c(\xi) g_y^{-1})$. Therefore, if $c\sim c'$ and $c$ is admissible, then so is $c'$. Also, if $c$ is connected, so is $c'$.

\begin{teo} \label{main1}
Let $X$ be a connected locally finite poset, $x_0\in X$ and $G$ a group. There exists a correspondence between the set of equivalence classes of admissible connected $G$-colorings of $X$ and the set of normal subgroups $N\triangleleft \pi_1 (X,x_0)$ such that $\pi_1(X,x_0)/ N$ is isomorphic to $G$.  
\end{teo}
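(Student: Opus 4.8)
The plan is to construct explicit maps in both directions and verify they are mutually inverse and well-defined on equivalence classes. Throughout I identify $\pi_1(X,x_0)$ with $\hh(X,x_0)$ via the isomorphism $\eta_X$, so it suffices to work with normal subgroups of $\hh(X,x_0)$.

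\textbf{From colorings to subgroups.} Given an admissible connected $G$-coloring $c$, the induced weight homomorphism $W_c\colon\hh(X,x_0)\to G$ is surjective (by connectedness), so $N_c:=\ker W_c$ is a normal subgroup with $\hh(X,x_0)/N_c\cong G$. I would first check this assignment is invariant under the equivalence relation of Definition \ref{defequi}: if $c'(x,y)=\varphi(g_x c(x,y)g_y^{-1})$, then for a \emph{closed} edge-path $\xi$ at $x_0$ one gets $w_{c'}(\xi)=\varphi(g_{x_0}w_c(\xi)g_{x_0}^{-1})$ (the telescoping of the $g_x$'s, using the remark following Definition \ref{defequi}), hence $W_{c'}$ and $W_c$ have the same kernel. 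So $c\mapsto N_c$ descends to equivalence classes.

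\textbf{From subgroups to colorings.} Given $N\triangleleft\hh(X,x_0)$ with a fixed isomorphism $\alpha\colon\hh(X,x_0)/N\xrightarrow{\sim}G$, I would build a coloring as follows. For each $x\in X$ choose an edge-path $\gamma_x$ from $x_0$ to $x$, with $\gamma_{x_0}$ empty. For an edge $(x,y)\in\ee(X)$ set $c(x,y):=\alpha\bigl(\langle\gamma_x\,(x,y)\,\gamma_y^{-1}\rangle N\bigr)$. Admissibility: if $\xi,\xi'$ are monotonic edge-paths from $x$ to $y$, then by the observation in the Preliminaries ("if $\xi$ and $\xi'$ are two monotonic edge-paths from $x$ to $y$ \dots $\langle\xi_1\xi\xi_2\rangle=\langle\xi_1\xi'\xi_2\rangle$"), the closed edge-paths $\gamma_x\xi\gamma_y^{-1}$ and $\gamma_x\xi'\gamma_y^{-1}$ are equal in $\hh(X,x_0)$, so $w_c$ assigns them the same element; more generally the same argument shows $W_c(\langle\xi\rangle)=\alpha(\langle\xi\rangle N)$ for \emph{every} closed edge-path $\xi$ at $x_0$ (decompose $\xi$ edge by edge and telescope the $\gamma$'s). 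Hence $W_c=\alpha\circ(\text{quotient map})$, which is onto, so $c$ is connected, and $\ker W_c=N$. Changing the choices $\gamma_x$ replaces $c$ by an equivalent coloring (the $g_x=\alpha(\langle\gamma_x(\gamma_x')^{-1}\rangle N)$ witness this with $\varphi=\mathrm{id}$), and changing $\alpha$ to $\alpha'=\psi\circ\alpha$ for an automorphism $\psi$ of $G$ replaces $c$ by $\psi\circ c$, which is equivalent with $\varphi=\psi$, $g_x=1$. One also checks that if one starts from two different isomorphisms $\hh(X,x_0)/N\cong G$ differing by an automorphism the colorings are equivalent — so the class of $c$ depends only on $N$.

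\textbf{Mutual inverseness.} Starting from $N$, the construction gives $c$ with $\ker W_c=N$, so $N\mapsto[c]\mapsto N_c=N$. Conversely, starting from an admissible connected $c$, form $N_c=\ker W_c$ and take $\alpha$ the isomorphism $\hh(X,x_0)/N_c\to G$ induced by $W_c$; running the construction with any choice of $\gamma_x$ yields a coloring $c'$ with $W_{c'}(\langle\xi\rangle)=W_c(\langle\xi\rangle)$ on all closed paths, and one checks directly from the formula that $c'(x,y)=w_c(\gamma_x)\,c(x,y)\,w_c(\gamma_y)^{-1}$ (both sides equal $W_c$ applied to the relevant path after closing it up, using admissibility of $c$ to move the weight along $\gamma$'s), i.e.\ $c'\sim c$ with $g_x=w_c(\gamma_x)$, $\varphi=\mathrm{id}$. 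Hence $[c]\mapsto N_c\mapsto[c]$.

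\textbf{Main obstacle.} The routine part is bookkeeping with telescoping products of the chosen paths $\gamma_x$; the genuinely delicate point is verifying \emph{well-definedness of the subgroup-to-coloring map up to equivalence}, i.e.\ that all the choices (the paths $\gamma_x$, the isomorphism $\alpha$, and implicitly the base point) only affect $c$ within its equivalence class, and — in the other direction — that an arbitrary admissible connected $c$ is recovered up to equivalence rather than on the nose. This is where Definition \ref{defequi} has to be exactly the right relation, and checking that the two telescoping identities match the shape $c'(x,y)=\varphi(g_x c(x,y)g_y^{-1})$ is the crux of the argument.
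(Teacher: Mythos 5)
Your proposal is correct and follows essentially the same route as the paper: both directions are given by the weight homomorphism $W_c$ and its kernel on one side, and by the construction $c(x,y)=\rho(\langle\gamma_x\,(x,y)\,\gamma_y^{-1}\rangle)$ from chosen paths $\gamma_x$ on the other, with the same telescoping checks for admissibility, connectedness, and mutual inverseness. If anything, you are slightly more careful than the paper on the well-definedness of the coloring-to-subgroup map (noting that $W_{c'}$ differs from $W_c$ by an automorphism composed with conjugation by $g_{x_0}$, which still preserves the kernel), but the argument is the same.
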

\begin{proof}

Since the groups $\hh (X,x_0)$ and $\pi _1(X, x_0)$ are naturally isomorphic, it suffices to prove the result for $\hh(X,x_0)$.
An admissible connected $G$-coloring $c$ of $X$ induces an epimorphism $W: \hh (X,x_0) \to G$. Then $G$ is isomorphic to $\hh (X,x_0)/N$ where $N=\kker(W)$. Equivalent colorings induce the same subgroup $N$ since the associated weight maps differ in an automorphism of $G$.

Conversely, if $N \triangleleft \hh (X,x_0)$ is such that $\hh (X,x_0)/N \simeq G$, then choose an isomorphism $\psi : \hh (X,x_0)/N \to G$ and define $\rho=\psi p : \hh (X,x_0)\to G$ where $p: \hh (X,x_0) \to \hh (X,x_0)/N$ is the canonical projection. Since $X$ is connected and locally finite, for each $x\in X$ there exists an edge-path $\gamma _x$ from $x_0$ to $x$. Given $(x,y)\in \ee (X)$, define $c(x,y)=\rho (\langle \gamma _x (x,y) \gamma _y^{-1} \rangle)$. If $\xi$ and $\xi '$ are two monotonic edge-paths in $X$ from a point $x$ to a point $y\ge x$, then $w_c(\xi)=\rho (\langle \gamma _x \xi \gamma _y ^{-1} \rangle)=\rho (\langle \gamma _x \xi ' \gamma _y ^{-1} \rangle)=w_c (\xi ')$, since $\langle \gamma _x \xi \gamma _y ^{-1} \rangle = \langle \gamma _x \xi ' \gamma _y ^{-1} \rangle$. Therefore the $G$-coloring $c$ is admissible. The induced morphism $W_c: \hh (X,x_0) \to G$ is the composition of $\rho$ with the conjugation by $\langle \gamma _{x_0} \rangle$. It follows that $W_c$ is an epimorphism 
and therefore $c$ is connected. Note that different choices of the isomorphism $\rho$ and the edge-paths $\gamma _x$ induce equivalent colorings.  

It remains to show that these constructions are reciprocal. Let $c$ be an admissible connected $G$-coloring of $X$ and let $N=\kker(W)$ be the induced normal subgroup of $\hh (X,x_0)$. We can choose the isomorphism $\psi :\hh (X,x_0)/N \to G$ to be the morphism induced by $W$ in the quotient. In this way, the map $\rho : \hh (X,x_0) \to G$ coincides with $W$. Therefore the new color $c' (x,y)$ of an edge $(x,y)\in \ee (X)$ is $W(\langle \gamma _x (x,y) \gamma _y^{-1} \rangle)=w_c(\gamma _x)c(x,y)w_c(\gamma _y)^{-1}$. Thus, $c '\sim c$.

Finally, if $N\triangleleft \hh (X,x_0)$ induces a coloring $c$, then the kernel of $W_c$ is $\kker(\rho)=N$.
\end{proof}

\begin{coro} \label{colorepi}

Let $X$ be a connected locally finite poset, $x_0\in X$ and $G$ a group. Then, there exists an admissible connected $G$-coloring of $X$ if and only if there exists an epimorphism $\pi _1(X,x_0) \to G$.
\end{coro}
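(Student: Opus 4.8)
The plan is to read this off from Theorem \ref{main1}, or rather from the two constructions appearing in its proof, since the corollary merely records that the two sets in that correspondence are simultaneously empty or nonempty.

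For the forward implication, suppose $c$ is an admissible connected $G$-coloring of $X$. Admissibility gives a well-defined homomorphism $W=W_c\colon \hh(X,x_0)\to G$, and connectedness (for an admissible coloring) says precisely that $W$ is onto. Transporting along the natural isomorphism $\eta_X\colon \hh(X,x_0)\to \pi_1(X,x_0)$ from the Preliminaries, i.e. taking $W\circ \eta_X^{-1}$, produces an epimorphism $\pi_1(X,x_0)\to G$.

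For the converse, suppose $f\colon \pi_1(X,x_0)\to G$ is an epimorphism and put $N=\kker(f)$. Then $N\triangleleft \pi_1(X,x_0)$ and $\pi_1(X,x_0)/N\simeq G$, so Theorem \ref{main1} supplies an admissible connected $G$-coloring of $X$, namely the one assigned to $N$. If one prefers an argument not invoking the bijectivity part of Theorem \ref{main1}, one can simply repeat the explicit construction from its proof: set $\rho=f\circ \eta_X\colon \hh(X,x_0)\to G$, choose for each $x\in X$ an edge-path $\gamma_x$ from $x_0$ to $x$ (possible since $X$ is connected and locally finite), and define $c(x,y)=\rho(\langle \gamma_x (x,y)\gamma_y^{-1}\rangle)$ for $(x,y)\in \ee(X)$.

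I do not expect any real obstacle here, as the corollary is strictly weaker than Theorem \ref{main1}. In the self-contained version of the converse the only point needing care is the admissibility of $c$, i.e. that $w_c(\xi)=\rho(\langle \gamma_x\xi\gamma_y^{-1}\rangle)$ is independent of the monotonic edge-path $\xi$ joining $x$ to $y$; this is immediate from the fact noted in the Preliminaries that two monotonic edge-paths with the same endpoints become equal in $\hh$. Connectedness of $c$ then follows because $W_c$ differs from $\rho$ by conjugation by $\langle \gamma_{x_0}\rangle$ and $\rho=f\circ \eta_X$ is surjective.
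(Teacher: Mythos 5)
Your proposal is correct and is exactly the intended derivation: the paper states Corollary \ref{colorepi} without proof as an immediate consequence of Theorem \ref{main1}, and your two directions (the weight map $W_c\circ\eta_X^{-1}$ for the forward implication, and applying the theorem to $N=\kker(f)$ for the converse) reproduce the constructions already present in that theorem's proof.
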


Let $B$ be a path-connected, locally path-connected, semilocally simply-connected space, $p:E\to B$ a covering, $b_0\in B$ and $e_0\in p^{-1}(b_0)$. We denote by $\Deck(p)$ the group of deck transformations (=covering transformations) of $p$, and let $\Fix (e_0)=p_*(\pi _1 (E, e_0)) \leqslant \pi _1 (B, b_0)$. Recall that two coverings $p:E\to B$, $p':E'\to B$ are said to be equivalent if there exists a homeomorphism $h:E\to E'$ such that $p'h=p$. The correspondence between conjugacy classes of subgroups of $\pi _1 (B,b_0)$ and equivalence classes of coverings of $B$ maps a normal subgroup $N\triangleleft \pi_1 (B,b_0)$ to a regular covering $p:E\to B$ with $\Fix (e_0)=N$ (see \cite[Section 1.3]{Hat}). In this case, $\Deck(p)$ is isomorphic to $\pi _1 (B,b_0)/N$. Therefore we deduce the following     

\begin{coro} \label{corresprev}
Let $B$ be a connected locally finite poset and let $G$ be a group. There exists a correspondence between the set of equivalence classes of regular coverings $p:E\to B$ of $B$ with $\Deck(p)$ isomorphic to $G$ and the set of equivalence classes of admissible connected $G$-colorings of $B$. 
\end{coro}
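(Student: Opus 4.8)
The plan is to derive the statement by composing the correspondence of Theorem~\ref{main1} with the classical Galois correspondence of covering space theory, exactly along the lines indicated in the paragraph preceding the corollary. Fix a basepoint $b_0\in B$. Theorem~\ref{main1} already provides a bijection between equivalence classes of admissible connected $G$-colorings of $B$ (equivalence in the sense of Definition~\ref{defequi}) and the set $\mathcal{N}_G$ of normal subgroups $N\triangleleft\pi_1(B,b_0)$ with $\pi_1(B,b_0)/N\cong G$. So it suffices to produce a bijection between $\mathcal{N}_G$ and the set of equivalence classes of regular coverings $p\colon E\to B$ with $\Deck(p)\cong G$.

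First I would check that $B$ satisfies the hypotheses needed to apply covering space theory: a connected locally finite poset is path-connected (recorded in the Preliminaries), and every $A$-space is locally contractible, hence locally path-connected and semilocally simply-connected, and in particular admits a universal cover. Then, by the classical classification (see \cite[Section~1.3]{Hat}), equivalence classes of connected coverings of $B$ are in bijection with conjugacy classes of subgroups of $\pi_1(B,b_0)$; under this bijection the coverings with normal associated subgroup are precisely the regular ones, a normal subgroup $N$ being realized as $N=\Fix(e_0)$ for an appropriate $e_0\in p^{-1}(b_0)$, and then $\Deck(p)\cong\pi_1(B,b_0)/N$. Since the conjugacy class of a normal subgroup is a singleton, and since the condition $\Deck(p)\cong G$ is equivalent to $\pi_1(B,b_0)/N\cong G$, this restricts to a bijection between $\mathcal{N}_G$ and equivalence classes of regular coverings of $B$ with deck transformation group isomorphic to $G$.

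Finally I would remark that $\mathcal{N}_G$ does not depend on the choice of $b_0$: changing the basepoint conjugates subgroups of the fundamental group, while both normality and the isomorphism type of the quotient are conjugation-invariant. Hence the composite of the two bijections is a well-defined, basepoint-free correspondence, which is the assertion of the corollary.

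I do not anticipate a genuine obstacle here: the substantive content lies in Theorem~\ref{main1} and in the standard facts recalled from \cite{Hat}. The only points deserving a word of care are that ``covering'' must be read as ``connected covering'' so that the Galois correspondence applies verbatim (regular coverings have connected total space, so this costs nothing), and that the two notions of equivalence being composed---homeomorphism over $B$ on the covering side, the coloring equivalence of Definition~\ref{defequi} on the other---are exactly the ones appearing in the results being invoked.
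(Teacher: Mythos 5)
Your proposal is correct and follows exactly the paper's route: Corollary \ref{corresprev} is deduced by composing the bijection of Theorem \ref{main1} with the classical Galois correspondence between normal subgroups and regular coverings, after noting that a connected locally finite poset is path-connected, locally contractible, and hence satisfies the hypotheses of the classification theorem. Your additional remarks on basepoint-independence and on reading ``covering'' as ``connected covering'' are sensible but add nothing beyond what the paper's own (implicit) argument already contains.
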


We state a more precise version of Corollary \ref{corresprev} making an explicit construction of the covering associated to a given $G$-coloring. Given an admissible connected $G$-coloring $c$ of $B$, we define the poset $E=E(c)= \{(x,g) \ | \ x\in B, \ g\in G \}$ with the relations $(x,g)\prec (y, gc(x,y))$ whenever $x\prec y$ in $B$.

\begin{teo} \label{main2}
Let $B$ be a connected locally finite poset and let $G$ be a group. If $c$ is an admissible connected $G$-coloring of $B$, then the projection $p(c): E(c)\to B$ onto the first coordinate is a regular covering of $B$ with $\Deck(p)$ isomorphic to $G$. Moreover, if $c$ and $c'$ are equivalent admissible connected $G$-colorings of $B$, then $p(c)$ and $p(c')$ are equivalent coverings of $B$. This application describes a correspondence between the set of equivalence classes of admissible connected $G$-colorings of $B$ and equivalence classes of regular coverings of $B$ with deck transformation group isomorphic to $G$.    
\end{teo}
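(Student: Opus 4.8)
The plan is to verify directly that $p=p(c)$ is a covering, then to identify $\Deck(p)$ with $G$, and finally to deduce the asserted bijection from Theorem~\ref{main1} together with the classical subgroup--covering dictionary recalled before Corollary~\ref{corresprev}. The first observation is that \emph{admissibility} makes the order of $E(c)$ transparent: the partial order generated by the relations $(x,g)\prec(y,gc(x,y))$ satisfies $(x,g)\le(y,h)$ if and only if $x\le y$ in $B$ and $h=g\,w_c(\xi)$ for some --- equivalently, by admissibility, for every --- monotonic edge-path $\xi$ from $x$ to $y$; indeed a chain from $(x,g)$ up to $(y,h)$ in $E(c)$ yields such a $\xi$, while lifting a given monotonic $\xi$ along the defining relations yields such a chain. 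Reflexivity and transitivity are then immediate (weights multiply along concatenated monotonic paths), and so is antisymmetry, since $x\le y\le x$ forces $x=y$ in the poset $B$ and a monotonic edge-path from $x$ to $x$ is empty; moreover $E(c)$ is locally finite because the elements $\le(x,g)$ are in bijection with the elements $\le x$, their second coordinate being determined by admissibility. In particular $p$ is order preserving, hence continuous, and surjective.

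Next I would show $p$ is a covering. Since the minimal open sets $U_b$ form a basis of $B$, it suffices that each $U_b$ be evenly covered. Fixing, for each $z\le b$, a monotonic edge-path $\xi_z$ from $z$ to $b$, the description of the order gives $U_{(b,g)}=\{(z,\,g\,w_c(\xi_z)^{-1})\mid z\le b\}$ for every $g\in G$, so $p$ restricts to a bijection $U_{(b,g)}\to U_b$; admissibility, applied to the monotonic paths $\xi_z$ and $\zeta\xi_{z'}$ from $z$ to $b$ (where $\zeta$ is monotonic from $z$ to $z'$), shows this bijection and its inverse are order preserving, so $p|_{U_{(b,g)}}$ is a homeomorphism. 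Finally the sets $U_{(b,g)}$, $g\in G$, are pairwise disjoint and their union is $p^{-1}(U_b)$ (a point $(z,k)$ with $z\le b$ lies in $U_{(b,g)}$ for $g=k\,w_c(\xi_z)$), so $U_b$ is evenly covered and $p$ is a covering. I expect this verification, and already the well-definedness of the order of $E(c)$, to be the technical heart of the proof: both rest squarely on admissibility, which is precisely why that hypothesis is imposed.

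I would then handle connectedness and the deck group. An edge-path $\xi$ in $B$ from $x$ to $y$ lifts, starting at $(x,g)$, to an edge-path of $E(c)$ ending at $(y,\,g\,w_c(\xi))$; hence $E(c)$ is connected, since two of its points are joined by first lifting a path between their projections and then, within a single fibre, lifting a closed edge-path of the required weight --- which exists because $c$ is \emph{connected}. Each $\phi_h\colon E(c)\to E(c)$, $\phi_h(x,g)=(x,hg)$, is an order automorphism with $p\,\phi_h=p$, and $h\mapsto\phi_h$ is an injective homomorphism $G\to\Deck(p)$; conversely a deck transformation of the connected covering $p$ is determined by its value at $(b_0,1)$, and $\phi_h(b_0,1)=(b_0,h)$, so this homomorphism is onto and $\Deck(p)\cong G$. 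Since $\{\phi_h\}$ acts transitively on the fibre $p^{-1}(b_0)=\{(b_0,g)\mid g\in G\}$, the covering $p$ is regular.

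Finally, for the correspondence I would compute the fibre subgroup of $p$. By Proposition~\ref{epfix} it is enough to determine $p_*(\hh(E(c),(b_0,1)))\le\hh(B,b_0)$: a closed edge-path $\xi$ at $b_0$ lifts at $(b_0,1)$ to an edge-path ending at $(b_0,w_c(\xi))$, which is closed exactly when $w_c(\xi)=1$, i.e. when its class lies in $\kker(W_c)$. Thus $p_*(\hh(E(c),(b_0,1)))=\kker(W_c)$, which is precisely the normal subgroup $N$ associated to $c$ in the proof of Theorem~\ref{main1}. Therefore the assignment $c\mapsto p(c)$ coincides with the composite of the bijection of Theorem~\ref{main1}, $[c]\mapsto N$, with the classical bijection between normal subgroups $N\triangleleft\pi_1(B,b_0)$ with $\pi_1(B,b_0)/N\cong G$ and equivalence classes of regular coverings of $B$ with deck group $\cong G$ (see \cite{Hat} and the discussion before Corollary~\ref{corresprev}); being a composite of bijections it is itself a bijection. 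In particular equivalent colorings give the same $N$, hence equivalent coverings; this can also be seen directly, since if $c'(x,y)=\varphi(g_x c(x,y)g_y^{-1})$ then $(x,g)\mapsto(x,\varphi(g g_x^{-1}))$ is a homeomorphism $E(c)\to E(c')$ over $B$, as a short verification on the covering relations confirms.
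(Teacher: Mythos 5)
Your proposal is correct and follows essentially the same route as the paper: the even covering of each $U_b$ via admissibility, connectedness of $E(c)$ via lifting closed edge-paths of prescribed weight, the identification $\hh Fix(b_0,1)=\kker(W_c)$, and the explicit equivalence $(x,g)\mapsto(x,\varphi(gg_x^{-1}))$ are all the same computations. The only (harmless) divergences are that you verify the well-definedness of the partial order on $E(c)$ explicitly, and you obtain regularity and $\Deck(p)\cong G$ directly from the left $G$-action $\phi_h(x,g)=(x,hg)$ acting transitively on the fibre, whereas the paper deduces these from the normal-subgroup/covering dictionary after computing $\kker(W_c)$.
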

\begin{proof}
The map $p=p(c):E=E(c)\to B$ is clearly continuous. We claim that if $b\in B$, then $$p^{-1}(U_b)=\coprod\limits_{g\in G} U_{(b,g)},$$ and that the restrictions to each $U_{(b,g)}$ are homeomorphisms. The inclusion $U_{(b,g)}\subseteq p^{-1}(U_b)$ follows from the continuity of $p$. Now suppose $(b',g)\in p^{-1}(U_b)$. Then $b'\le b$ and there exists a chain $b'=b_1\prec b_2 \prec \ldots \prec b_r=b$. Since $(b_i,h)\prec (b_{i+1}, hc(b_i, b_{i+1}))$, we have that $$(b',g)\le (b,gc(b_1, b_2)c(b_2, b_3) \ldots c(b_{r-1},b_r)).$$ Thus $(b',g)\in U_{(b,h)}$ for $h=gc(b_1, b_2)c(b_2, b_3) \ldots c(b_{r-1},b_r)$.

Suppose $(b',h)\in U_{(b,g_1)}\cap U_{(b,g_2)}$. Then there exists a chain $b'=b_1\prec b_2 \prec \ldots \prec b_r=b$ such that $g_1=hc(b_1, b_2)c(b_2, b_3) \ldots c(b_{r-1},b_r)$ and there is a chain $b'=b_1'\prec b_2' \prec \ldots \prec b_s'=b$ such that $g_2=hc(b_1', b_2')c(b_2', b_3') \ldots c(b_{r-1}',b_s')$. By the admissibility of $c$, $g_1=g_2$. This proves that the union is disjoint.

The map $U_b\to U_{(b,g)}$ which maps $b'$ into $(b',gc(b_r,b_{r-1})^{-1}\ldots c(b_2,b_1)^{-1})$, where $b'=b_1\prec b_2 \prec \ldots \prec b_r=b$ is any chain between $b'$ and $b$, is a continuous inverse of $p|_{U_{(b,g)}}$. Therefore $p$ is a covering.

Note that $E$ is a connected space since the coloring $c$ is connected. If $\xi$ is a closed edge-path at $b_0$ with weight $w_c(\xi)=g$, then the lift of $\xi$ from $(b_0,1)$ ends in $(b_0,g)$. Therefore the connectedness of $c$ implies that any two points in the fiber of $b_0$ lie in the same component of $E$.

Suppose $c'\sim c$, that is, there exists $\varphi \in \textrm{Aut} (G)$ and $g_b\in G$ for every $b\in B$ such that $c'(b_1,b_2)=\varphi (g_{b_1} c(b_1,b_2) g_{b_2}^{-1})$ for each $(b_1,b_2)\in \ee (B)$. Consider the map $h:E(c)\to E(c')$ which maps $(b,g)$ to $(b,\varphi(gg_b^{-1}))$. If $(b_1,g) \prec (b_2,gc(b_1,b_2))$ then $$h(b_1,g)=(b_1, \varphi (gg_{b_1}^{-1}))\prec (b_2, \varphi (gg_{b_1}^{-1})c'(b_1,b_2))=$$ $$=(b_2, \varphi (gc(b_1,b_2)g_{b_2}^{-1}))=h(b_2, gc(b_1,b_2)).$$

Hence, $h$ is continuous and $p(c')h=p(c)$. Moreover $h':E(c')\to E(c)$ given by $h'(b,g)=(b,\varphi ^{-1}(g)g_b)$ is the inverse of $h$. Therefore $p(c)$ and $p(c')$ are equivalent coverings.

Let $b_0\in B$. Note that a closed edge-path $\xi$ at $b_0$ lifts to a closed edge-path at $(b_0, 1)\in E(c)$ if and only if $w_c(\xi)=1\in G$. Therefore, $\hh Fix(b_0,1)=\kker (W_c)$. On the other hand, the application which associates a normal subgroup of $\pi _1(B,b_0)$ to an admissible connected $G$-coloring of $B$, maps $c$ into $\eta _B (\kker(W_c))\triangleleft \pi _1 (B, b_0)$. This subgroup corresponds to a regular covering of $B$ whose fix subgroup is equal to $\eta _B(\kker(W_c))$, or equivalently by Proposition \ref{epfix}, to a covering with $\hh Fix$ equal to $\kker (W_c)$. Therefore, the composition of the correspondence of Theorem \ref{main1} with the correspondence between normal subgroups of $\pi _1(B,b_0)$ and regular coverings of $B$, is the application described above. In particular, $p(c):E(c)\to B$ is a regular covering with $\Deck(p(c))$ isomorphic to $G$ and this assignation is a one-to-one correspondence.

\end{proof}

One can prove that the functor $\x$ induces a correspondence between the equivalence classes of coverings of a simplicial complex $K$ and the equivalence classes of coverings of $\x (K)$. A similar result holds for the functor $\kp$ (see \cite{BM3}). Therefore, colorings can be used to describe all the regular coverings of a given polyhedron.

\begin{ej} \label{planoproy}
The poset $X$ of Figure \ref{proy} is the face poset of a regular CW-complex homeomorphic to the real projective plane $\mathbb{R}P^2$. Therefore, its fundamental group is the group $\mathbb{Z}_2$ of order two. We will show in Section \ref{presentacion} an alternative way to compute $\pi _1 (X,x_0)$ (ignoring the fact that this poset is related to the projective plane).

\begin{figure}[h] 
\begin{center}
\includegraphics[scale=0.6]{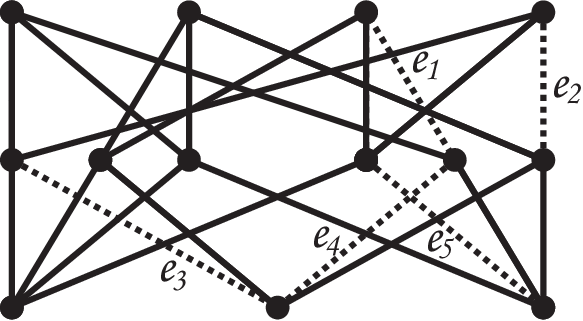}
\caption{A finite model of the projective plane.}\label{proy}
\end{center}
\end{figure}

Consider the $\mathbb{Z}_2$-coloring of $X$ in which every solid edge of Figure \ref{proy} is colored with the identity $0$ of $\mathbb{Z}_2$ and where the four dotted edges are colored with the non-trivial element of $\mathbb{Z}_2$. It is easy to check that this coloring is admissible and connected and corresponds, by Theorem \ref{main1}, to a subgroup $N\triangleleft \pi _1(X,x_0)$ such that $\pi _1 (X,x_0) /N$ is isomorphic to $\mathbb{Z}_2$. Therefore $N$ is the trivial group and the corresponding covering is the universal cover. Now, it is easy to distinguish the closed edge-paths which are trivial in $\hh (X,x_0)$ once we have the coloring corresponding to the universal cover. A closed edge-path $\xi$ at $x_0$ is trivial if and only if it lifts to a loop in the universal cover. This happens if and only if its weight $w_c(\xi)$ is trivial. Therefore, in this example a closed edge-path represents the identity of $\hh (X,x_0)$ if and only if it passes through a dotted edge an even number of times.
\end{ej}

\begin{ej}[Detecting $K(G,1)$'s]
A topological space having a universal cover is an Eilenberg-MacLane space $K(G,1)$ if and only if its universal cover is homotopically trivial, i.e. weak homotopy equivalent to the singleton. We use Theorem \ref{main2} to construct a covering from a given coloring, and the fact that, in the context of posets, sometimes it is easy to recognize homotopically trivial spaces via beat points (see Proposition \ref{stong}).

Consider the space $X$ of Figure \ref{moeb1} with the following $\mathbb{Z}$-coloring $c$. The solid edges are colored with the trivial element $0\in \mathbb{Z}$ and the dotted edges are colored with the generator $1\in \mathbb{Z}$. This is an admissible and connected $\mathbb{Z}$-coloring of $X$. 

\begin{figure}[h] 
\begin{center}
\includegraphics[scale=0.4]{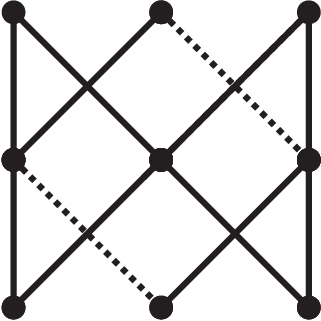}
\caption{The $\mathbb{Z}$-coloring $c$ of the poset $X$.}\label{moeb1}
\end{center}
\end{figure}

The covering $E$ associated to this coloring according to Theorem \ref{main2} is sketched in Figure \ref{moeb2}. 

\begin{figure}[h] 
\begin{center}
\includegraphics[scale=0.4]{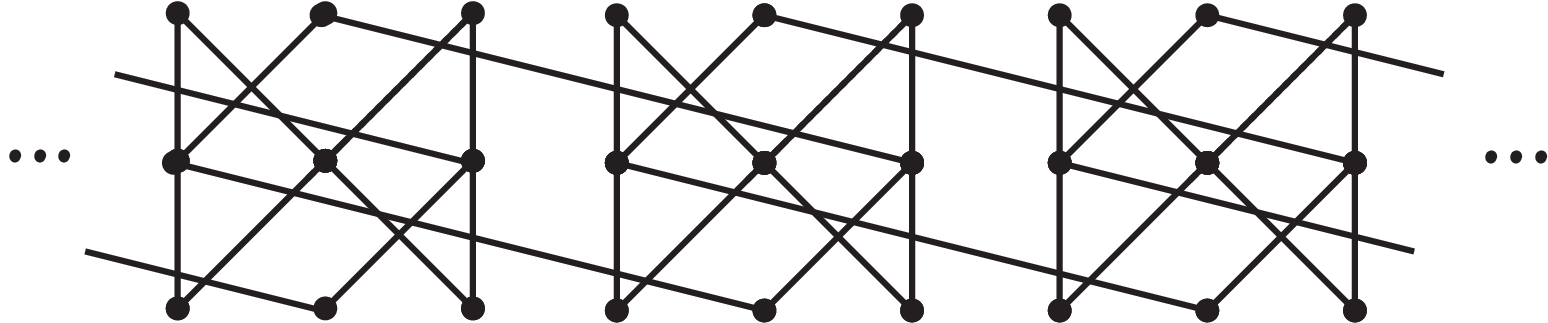}
\caption{The covering $E$ associated to $c$.}\label{moeb2}
\end{center}
\end{figure}

We claim that $E$ is a homotopically trivial space. Indeed, the Hasse diagram of $E$ is a countable union of copies $X_n$, $n\in \mathbb{Z}$, of the diagram $X_0$ in Figure \ref{moeb3}. The intersection of $X_n$ and $X_m$ has two points if $|n-m|=1$ and is empty otherwise. The space $X_0$ is contractible. Moreover, the subspace of two points, $x$ and $y$, is a deformation retract of $X_0$. This is really easy to check, removing beat points one by one.

\begin{figure}[h] 
\begin{center}
\includegraphics[scale=0.4]{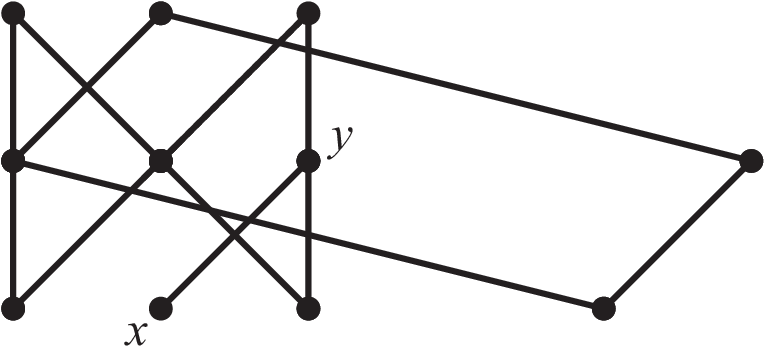}
\caption{The space $X_0$ which deformation retracts to the edge $(x,y)$.}\label{moeb3}
\end{center}
\end{figure}   

This shows in fact that $Z_{n,k+1}=X_n\cup X_{n-1}\cup \ldots \cup X_{n-k-1}$ deformation retracts to $Z_{n,k}=X_{n}\cup X_{n-1}\cup \ldots \cup X_{n-k}$, and then $Z_{n,k}$ is contractible for any $n\in \mathbb{Z}$, and $k\ge 0$. Now, any compact subspace of $E$ is contained in a subspace $Z_{n,k}$ since any minimal open set $U_z$ of $E$ intersects finitely many copies of $X_0$ (one or two). Then the image of any map from a sphere to $E$ is contained in a contractible subspace, which proves that $E$ is homotopically trivial. In particular $\pi _r(X)=0$ for every $r\ge 2$. This proves that $X$ is a $K(G,1)$ for some $G$. In fact, $X$ is a $K(\mathbb{Z},1)$. One can easily verify that $\pi_1(X)=\mathbb{Z}$ using for example Theorem \ref{main3} below.

\end{ej}

Note that an admissible $G$-coloring $c:\ee (X)\to G$ is equivalent to a functor $X\to G$ where $G$ is viewed as a category with a unique object and one arrow for each element of the group. Since every morphism in the category $G$ is an isomorphism, a functor $X\to G$ is equivalent to a functor $\Sigma^{-1} X \to G$ from the category of fractions of $X$, which is obtained from $X$ by formally inverting all the arrows (see \cite{GZ}). This is equivalent to a group homomorphism $\pi _1 (X,x_0)\to G$ (cf. \cite[pp.89-90]{Qu}). 

\bigskip

To finish this section we exhibit a method for constructing a poset with fundamental group isomorphic to any given group. This idea copies, in some sense, Milnor's classical construction of universal bundles and classifying spaces of groups \cite{Mil}.

Let $G$ be a group. Let $X$ be the following poset of height $2$. The set of minimal elements is $G \times \Z _3$. The set of points of height $1$ is $G\times G \times \Z _3$ and the set of maximal points is $G\times G \times G$. The order is given as follows $(g,h,i+1)$ covers $(g,i)$ and $(h,i+2)$, and $(g,h,k)$ covers $(g,h,1)$, $(k,g,2)$ and $(h,k,0)$ for each $g,h,k\in G$ and $i\in \Z _3$. The group $G$ acts on $X$ by left multiplication in each coordinate belonging to $G$. This action is properly discontinuous and therefore the projection $p:X\to X/G$ is a covering with deck transformation group isomorphic to $G$. The space $X$ is simply-connected. This can be proved for instance by induction in the order of $G$, using Theorem \ref{main3} of the next section. Therefore $X/G$ is a poset with fundamental group isomorphic to $G$.

For $G=\Z _2$ this construction gives a space $X/G$ of $13$ points, isomorphic to the model of the projective plane of Example \ref{planoproy}.  

\section{Presentations of the fundamental group} \label{presentacion}

Let $X$ be the poset of Figure \ref{ocho}. Let $G=\mathbb{Z}*\mathbb{Z}$ be the free group on two generators $g, h$. There is an admissible connected $G$-coloring which is trivial in the solid edges and such that the two dotted edges are colored one with $g$ and the other with $h$. By Corollary \ref{colorepi}, there exists an epimorphism $\pi _1(X,x_0)\to G$. Again by Corollary \ref{colorepi}, there exists an admissible connected $\pi _1(X,x_0)$-coloring $c$ of $X$. Since the undirected subgraph given by the solid edges is a tree, it is possible to show that there is a coloring $c'$ of $X$ which is equivalent to $c$ and which is trivial in the solid edges. Hence, $\pi _1 (X,x_0)$ is generated by two elements, the $c'$-colors of the dotted edges. This says that there exists an epimorphism $G\to \pi _1 (X, x_0)$. One can deduce then that $\pi _1 (X,x_0)$ is isomorphic to $\mathbb{Z}*\mathbb{Z}$.

\begin{figure}[h] 
\begin{center}
\includegraphics[scale=0.45]{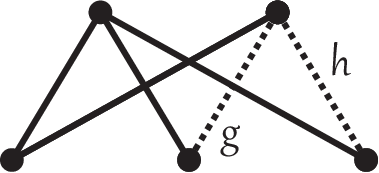}
\caption{A poset whose fundamental group is the free group on two generators.}\label{ocho}
\end{center}
\end{figure}

In general it is not true that the existence of epimorphisms $G\to H$ and $H\to G$ implies that $G$ and $H$ are isomorphic (see for instance \cite{BS}). Two posets admitting the same set of groups $G$ for which there is an admissible connected $G$-coloring, need not have isomorphic fundamental groups. Nevertheless we will see that we can use colorings to compute the fundamental groups of posets (and therefore of simplicial complexes).

By a subdiagram of a Hasse diagram $D$ we mean a subgraph of $D$. If $X$ is a locally finite poset, any subdiagram of the Hasse diagram of $X$ is the Hasse diagram of a locally finite space $A$. This space need not be a subspace of $X$. However, the inclusion $A\hookrightarrow X$ is continuous.

\begin{lema} \label{arbol}
Let $G$ be a group with identity $1$. Let $X$ be a connected locally finite $T_0$-space and let $D$ be a subdiagram of the Hasse diagram of $X$ which corresponds to a connected space $A$. If the map $i_*:\hh (A, x_0)\to \hh (X,x_0)$ induced by the inclusion is trivial for some $x_0\in A$, then for each admissible $G$-coloring $c$ of $X$ there exists a $G$-coloring $c'$ equivalent to $c$ such that $c'(x,y)=1$ for every $(x,y)\in \ee (A)$. In particular, this holds when $A$ is simply-connected.
\end{lema}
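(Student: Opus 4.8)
The plan is to exhibit the equivalence of Definition \ref{defequi} explicitly, taking the automorphism $\varphi$ to be the identity and constructing only the vertex elements $g_x\in G$. First I would observe that, since $D$ is a subdiagram of the Hasse diagram of $X$, every edge of $A$ is literally an edge of $X$, so every edge-path in $A$ is an edge-path in $X$ with the same $w_c$-weight; in particular, for a closed edge-path $\xi$ in $A$ based at $x_0$, the inclusion sends $\langle \xi \rangle$ to the class of the very same edge-path in $\hh(X,x_0)$, so that $w_c(\xi)=W_c(i_*(\langle \xi \rangle))$, where $W_c:\hh(X,x_0)\to G$ is the homomorphism induced by the admissible coloring $c$. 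By hypothesis $i_*$ is trivial, hence $w_c(\xi)=1$ for every closed edge-path $\xi$ of $A$ based at $x_0$.

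The key consequence, which I would isolate as the heart of the argument, is that $w_c$ becomes path-independent on $A$: given two edge-paths $\gamma,\gamma'$ in $A$ from $x_0$ to a point $x$, the concatenation $\gamma(\gamma')^{-1}$ is a closed edge-path in $A$ at $x_0$, so by the previous step together with the identities $w_c(\eta^{-1})=w_c(\eta)^{-1}$ and $w_c(\eta\eta')=w_c(\eta)w_c(\eta')$ (immediate from the definition of $w_c$ and the convention $c(y,x)=c(x,y)^{-1}$), we obtain $w_c(\gamma)=w_c(\gamma')$. Since $A$ is connected and locally finite, for each $x\in A$ I can choose an edge-path $\gamma_x$ in $A$ from $x_0$ to $x$; I then set $g_x=w_c(\gamma_x)$, which is well defined by the preceding sentence, and $g_x=1$ for $x\in X\smallsetminus A$.

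Finally I would define $c'(x,y)=g_x c(x,y) g_y^{-1}$ for every $(x,y)\in\ee(X)$; this is a $G$-coloring equivalent to $c$ by Definition \ref{defequi}. To check that $c'\equiv 1$ on $\ee(A)$: if $(x,y)\in\ee(A)$ then $x,y\in A$, and $\gamma_x(x,y)$ is an edge-path in $A$ from $x_0$ to $y$, so it may serve as $\gamma_y$; then $g_y=w_c(\gamma_x)c(x,y)=g_x c(x,y)$, whence $c'(x,y)=g_x c(x,y)\bigl(g_x c(x,y)\bigr)^{-1}=1$. For the last assertion, if $A$ is simply-connected then $\hh(A,x_0)\cong\pi_1(A,x_0)=1$, so $i_*$ is automatically trivial and the hypothesis is met.

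I expect nothing here to be genuinely hard; the one point to get right is the bookkeeping of conventions — choosing $g_x=w_c(\gamma_x)$ rather than its inverse, and writing the equivalence as $g_x c(x,y) g_y^{-1}$, so that extending $\gamma_x$ by the edge $(x,y)$ cancels $c(x,y)$ exactly — together with the well-definedness of the $g_x$, which is the single place where the triviality of $i_*$ is actually used.
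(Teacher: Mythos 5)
Your proof is correct and follows essentially the same route as the paper's: choose edge-paths $\gamma_x$ in $A$ from $x_0$, set $g_x=w_c(\gamma_x)$ on $A$ and $g_x=1$ off $A$, and conjugate $c$ by these elements, the triviality of $i_*$ (together with admissibility of $c$, which makes $W_c$ well defined on classes) being exactly what forces the weights of closed edge-paths of $A$ to be $1$. The only cosmetic difference is that you first isolate path-independence of $w_c$ on $A$ and then use it, whereas the paper computes $c'(a_1,a_2)=w_c(\gamma_{a_1}(a_1,a_2)\gamma_{a_2}^{-1})$ directly and observes that this closed edge-path is nullhomotopic in $X$.
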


\begin{proof}
Choose an edge-path $\gamma _a$ in $A$ from $x_0$ to $a$ for each $a\in A$. Define the $G$-coloring $c'$ of $X$  by $c'(x_1,x_2)=g_{x_1}c(x_1,x_2)g_{x_2}^{-1}$ where $g_{x}=w_{c}(\gamma _x)$ if $x\in A$ and $g_x=1$ if $x\notin A$. Then $c'$ and $c$ are equivalent $G$-colorings of $X$ and $c'$ restricted to $A$ is trivial. Given $(a_1,a_2)\in \ee (A)$, one has $c'(a_1,a_2)=w_{c}(\gamma _{a_1})c(a_1,a_2)w_{c}(\gamma _{a_2})^{-1}=w_c(\gamma _{a_1}(a_1,a_2)\gamma _{a_2}^{-1})$ which is $1$ since the closed edge-path $\gamma _{a_1}(a_1,a_2)\gamma _{a_2}^{-1}$ is equivalent to the empty path at $x_0$ by the hypothesis on $i_*:\hh (A, x_0)\to \hh (X,x_0)$. 
\end{proof}
\begin{obs} \label{obspi2}
Lemma \ref{arbol} can be generalized as follows. If $\{D_j\}_{j\in J}$ is a collection of pairwise disjoint connected subdiagrams of $X$ and the inclusions $D_j\hookrightarrow X$ induce the trivial homomorphism on fundamental groups, then for each admissible $G$-coloring $c$ of $X$ there exists an equivalent coloring which is trivial in all the diagrams $D_j$ simultaneously. To prove this we follow the proof of Lemma \ref{arbol} above choosing the edge-paths $\gamma_a$ carefully. Let $x_0$ be any point of $X$. Choose a point $x_j$ in each diagram $D_j$ and an edge-path $\gamma _j$ in $X$ from $x_0$ to $x_j$. For each point $a\in D_j$ let $\gamma _a'$ be an edge-path in $D_j$ from $x_j$ to $a$ and let $\gamma _a=\gamma _j\gamma _a'$. Define the coloring $c'$ and the $g_x$ as before considering $A=\bigcup D_j$. If $(a_1,a_2)\in \ee (D_j)$, $\gamma _{a_1}(a_1,a_2)\gamma _{a_2}^{-1}$=$\gamma _j \gamma _{a_1}'(a_1,a_2)(\gamma _{a_2}')^{-1} \gamma _j ^{-1}$ is 
equivalent 
to the empty path at $x_0$ since, by hypothesis, $\gamma _{a_1}'(a_1,a_2)(\gamma _{a_2}')^{-1}$ is equivalent to the empty path at $x_j$. 
\end{obs}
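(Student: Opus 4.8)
The plan is to follow the proof of Lemma \ref{arbol} almost verbatim; the only genuinely new ingredient is a more careful choice of the auxiliary edge-paths, made necessary by the facts that the base point $x_0$ need not lie in any of the $D_j$ and that the $D_j$ are pairwise disjoint rather than forming a single connected subdiagram. The structural fact that does the work is exactly the one used in Lemma \ref{arbol}: since $c$ is admissible, the rule $\langle\xi\rangle\mapsto w_c(\xi)$ defines a homomorphism $W_c:\hh(X,x)\to G$ for every base point $x$, so $w_c$ kills every closed edge-path that is equivalent to the empty one. I will also use that, via the natural isomorphism $\hh\cong\pi_1$ of Remark \ref{naturalidad}, the hypothesis that each inclusion $D_j\hookrightarrow X$ is trivial on fundamental groups is the same as saying that it induces the trivial homomorphism $\hh(D_j,x_j)\to\hh(X,x_j)$, and that this property does not depend on the chosen base point of $D_j$.

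First I would fix any $x_0\in X$. For each $j\in J$ I would pick a point $x_j\in D_j$ together with an edge-path $\gamma_j$ in $X$ from $x_0$ to $x_j$ (possible since $X$ is connected and locally finite), and for each $a\in D_j$ an edge-path $\gamma'_a$ lying entirely in $D_j$ from $x_j$ to $a$ (possible since $D_j$ is connected); then I would set $\gamma_a=\gamma_j\gamma'_a$, an edge-path in $X$ from $x_0$ to $a$. Next, I would define $g_x\in G$ by $g_a=w_c(\gamma_a)$ when $a$ belongs to some $D_j$ and $g_x=1$ otherwise --- this is unambiguous precisely because the $D_j$ are pairwise disjoint --- and put $c'(x,y)=g_x\,c(x,y)\,g_y^{-1}$ for $(x,y)\in\ee(X)$. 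By Definition \ref{defequi} (with $\varphi=\mathrm{id}_G$), $c'$ is a $G$-coloring equivalent to $c$, hence again admissible.

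It then remains to check that $c'$ vanishes on $\ee(D_j)$ for every $j$. If $(a_1,a_2)\in\ee(D_j)$ then, since $D_j$ is connected, both $a_1$ and $a_2$ lie in $D_j$, and
$$c'(a_1,a_2)=g_{a_1}\,c(a_1,a_2)\,g_{a_2}^{-1}=w_c\!\big(\gamma_{a_1}(a_1,a_2)\gamma_{a_2}^{-1}\big)=w_c\!\big(\gamma_j\,\gamma'_{a_1}(a_1,a_2)(\gamma'_{a_2})^{-1}\,\gamma_j^{-1}\big).$$
The inner edge-path $\gamma'_{a_1}(a_1,a_2)(\gamma'_{a_2})^{-1}$ is a closed edge-path at $x_j$ all of whose edges lie in $D_j$, so its class belongs to $\hh(D_j,x_j)$; by hypothesis its image in $\hh(X,x_j)$ is trivial, i.e. this closed edge-path is equivalent to the empty one in $X$. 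Hence so is its conjugate $\gamma_j\,\gamma'_{a_1}(a_1,a_2)(\gamma'_{a_2})^{-1}\,\gamma_j^{-1}$, and admissibility of $c$ forces its weight to be $1$. Thus $c'(a_1,a_2)=1$, which is what we want.

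The only step that requires genuine care is the handling of the pairwise disjointness of the $D_j$: it is what makes $g_a$ well defined (each point of $X$ lies in at most one diagram) and what guarantees that no edge is shared between two of the $D_j$, so that imposing $c'\equiv 1$ on all of them simultaneously is consistent. Everything else is a routine transcription of the proof of Lemma \ref{arbol}, so I would not anticipate any further difficulty.
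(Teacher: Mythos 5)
Your proof is correct and follows essentially the same route as the paper: choose $\gamma_a=\gamma_j\gamma'_a$ with $\gamma'_a$ inside $D_j$, set $g_a=w_c(\gamma_a)$ (and $g_x=1$ off $\bigcup D_j$), and use the hypothesis to see that $\gamma_j\gamma'_{a_1}(a_1,a_2)(\gamma'_{a_2})^{-1}\gamma_j^{-1}$ is null in $\hh(X,x_0)$, hence has weight $1$ by admissibility. Your extra remarks on why disjointness makes the $g_a$ well defined are a welcome clarification but do not change the argument.
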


\begin{defi}
Let $X$ be a connected locally finite $T_0$-space and let $x_0\in X$. Choose for each $x\in X$ with $x\neq x_0$ an edge-path $\gamma _x$ from $x_0$ to $x$ and take $\gamma _{x_0}$ to be the trivial edge-path. The \textit{standard coloring} of $X$ is the $\hh (X,x_0)$-coloring  given by $c(x,y)=\langle \gamma _x (x,y) \gamma _y^{-1} \rangle$. Clearly $c$ is admissible and connected since the weight of a closed edge-path $\xi$ at $x_0$ is $\langle \xi \rangle$. If we take a different $\gamma _x$ for $x\neq x_0$, we obtain an equivalent coloring. Therefore, the standard coloring of $X$ is well defined up to equivalence.   
\end{defi}

The following is the main result of this section. Although at first sight its statement may seem technical, the examples below show that it can be easily applied to compute the fundamental group of posets. 

\begin{teo} \label{main3}
Let $X$ be a connected locally finite $T_0$-space and let $x_0\in X$. Let $D$ be a subdiagram of the Hasse diagram of $X$ which corresponds to a simply-connected space $A$. Let $\{e_\alpha \}_{\alpha \in \Lambda}$ be the subset of $\ee (X)$ of edges which are not in $D$. Let $G$ be the group generated by the $e_\alpha$'s with the relations given by admissibility. Concretely, for any two chains $$x=x_1\prec x_2\prec \ldots \prec x_r=y, $$ $$x=x_1'\prec x_2'\prec \ldots \prec x_s'=y $$ from any point $x$ to any point $y$, we put a relation $$\prod\limits_{(x_i,x_{i+1})\notin D} (x_i,x_{i+1}) = \prod\limits_{(x_i',x_{i+1}')\notin D} (x_i',x_{i+1}').$$ 
Suppose there is a subset $\Gamma\subseteq \Lambda$ such that the classes $\{\overline{e}_\alpha\}_{\alpha \in \Gamma}$ generate $G$ and such that for each $\alpha \in \Gamma$ there exists a closed edge-path $\omega_\alpha$ in $x_0$ which contains $e_\alpha$ exactly once and contains no other edge $e_\beta$ for $\beta \in \Lambda$. Then $\pi _1(X,x_0) \simeq G$.  
\end{teo}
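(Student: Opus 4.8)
The idea is to construct an admissible connected $G$-coloring $c$ of $X$ from the given data, apply Theorem~\ref{main1} to get a normal subgroup $N \triangleleft \pi_1(X,x_0)$ with $\pi_1(X,x_0)/N \simeq G$, and then show $N$ is trivial. First I would define the coloring: since $A$ is simply-connected, by Lemma~\ref{arbol} it suffices to work with colorings trivial on $D$; so I set $c(x,y)=1$ for $(x,y)\in\ee(A)$ and $c(e_\alpha)=\overline{e}_\alpha\in G$ for $\alpha\in\Lambda$ (the image of the generator $e_\alpha$ in $G$). The relations imposed in the definition of $G$ are exactly the admissibility relations for the monotonic edge-paths, so $c$ is admissible by construction; it is connected because $\{\overline{e}_\alpha\}_{\alpha\in\Gamma}$ generates $G$ and each $\overline{e}_\alpha$ is realized (as a weight) by the closed edge-path $\omega_\alpha$ at $x_0$ — indeed $w_c(\omega_\alpha)=\overline{e}_\alpha$ since $\omega_\alpha$ traverses $e_\alpha$ exactly once and no other non-$D$ edge, so all other factors are $1$.

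Now Theorem~\ref{main1} hands me the induced epimorphism $W_c:\pi_1(X,x_0)\twoheadrightarrow G$ with kernel $N$, giving $\pi_1(X,x_0)/N\simeq G$. To finish I must exhibit a splitting, i.e. a homomorphism $s:G\to\pi_1(X,x_0)$ with $W_c\circ s=\mathrm{id}_G$; then $W_c$ is an isomorphism and $N=1$. I would define $s$ on generators by $s(\overline{e}_\alpha)=\langle\omega_\alpha\rangle$ for $\alpha\in\Gamma$ (identifying $\pi_1(X,x_0)\cong\hh(X,x_0)$). The point is that $W_c(\langle\omega_\alpha\rangle)=w_c(\omega_\alpha)=\overline{e}_\alpha$, so once $s$ is well-defined we immediately get $W_c\circ s=\mathrm{id}$, hence $W_c$ injective, hence $N$ trivial and $\pi_1(X,x_0)\simeq G$.

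\textbf{The main obstacle} is checking that $s$ is well-defined, i.e. that the $\langle\omega_\alpha\rangle$ satisfy in $\hh(X,x_0)$ every defining relation of $G$. A defining relation of $G$ is a word $w(\overline{e}_\alpha)=1$ in the generators $\{\overline{e}_\alpha\}_{\alpha\in\Gamma}$ that holds in $G$; since $G$ is \emph{presented} by the generators $e_\alpha$ ($\alpha\in\Lambda$) modulo the admissibility relations, and then $\Gamma\subseteq\Lambda$ is a generating subset, I should argue as follows. First re-present $G$ on the generators $\{\overline{e}_\alpha\}_{\alpha\in\Gamma}$: each $\overline{e}_\beta$ ($\beta\in\Lambda$) equals some word $u_\beta$ in the $\Gamma$-generators, and the relations of $G$ become the admissibility relations with each $\overline{e}_\beta$ substituted by $u_\beta$. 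So I need: (i) for every monotonic relation $\prod_{(x_i,x_{i+1})\notin D}(x_i,x_{i+1})=\prod_{(x_j',x_{j+1}')\notin D}(x_j',x_{j+1}')$, the corresponding identity holds among the $\langle\omega_\alpha\rangle$; and (ii) the substitution words $u_\beta$ are respected. For (i), the key observation is that $W_c:\hh(X,x_0)\to G$ is an epimorphism and, crucially, is \emph{split by edge-paths in a strong sense}: any closed edge-path $\xi$ at $x_0$ with $w_c(\xi)=g$ can be rewritten, using admissibility (which is literally the statement that monotonic subpaths with fixed endpoints are interchangeable, and $D$-edges contribute trivially), as a product of the $\omega_\alpha^{\pm1}$ up to the relations that already hold in $G$. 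Concretely I would show that $\hh(X,x_0)$ itself is generated by $\{\langle\omega_\alpha\rangle : \alpha\in\Gamma\}$ (using Lemma~\ref{arbol}/the standard coloring to contract $D$, then expressing an arbitrary loop's class via its sequence of non-$D$ edges, and using that each such edge $e_\alpha$ can be ``routed'' through $\omega_\alpha$ since they share the value $\overline e_\alpha$ and $D$ is simply-connected so the routing-loops are null-homotopic), and that the relations among these generators in $\hh(X,x_0)$ are \emph{implied by} the admissibility relations — this last containment is the heart of the matter and is where one must be careful, since a priori $\hh(X,x_0)$ could have more relations; but the hypothesis that each $e_\alpha$ ($\alpha\in\Gamma$) sits in a loop $\omega_\alpha$ using no other non-$D$ edge is exactly what forces every relation in $\hh(X,x_0)$ to come from rearranging monotonic segments, i.e. from admissibility. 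I expect the cleanest writeup to set up a homomorphism $G\to\hh(X,x_0)$ and the epimorphism $W_c$ in the other direction and verify they are mutually inverse on generators, reducing everything to the two bookkeeping facts above; the routing argument in (i) via simple-connectedness of $A$ is the step most likely to need care.
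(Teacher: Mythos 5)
Your overall strategy (color $D$ trivially, give $e_\alpha$ the color $\overline e_\alpha$, check admissibility and connectedness, then build a map $G\to\pi_1(X,x_0)$ in the other direction) is the same as the paper's, and the first half of your argument is fine. But there are two genuine problems in the second half. First, a logical one: from a homomorphism $s:G\to\pi_1(X,x_0)$ with $W_c\circ s=\mathrm{id}_G$ you conclude ``hence $W_c$ injective, hence $N$ trivial.'' This does not follow: a retraction can admit a section and still have nontrivial kernel (e.g.\ the projection $\Z\times\Z\to\Z$). You would also need $s$ to be \emph{surjective}, i.e.\ that the classes $\langle\omega_\alpha\rangle$, $\alpha\in\Gamma$, generate $\hh(X,x_0)$ --- something you do mention later, but only inside an unexecuted sketch, and which is not obvious a priori (the natural generating set coming from connectedness of the coloring is indexed by all of $\Lambda$, not by $\Gamma$).

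Second, and more seriously, the well-definedness of $s$ on the $\Gamma$-generators is exactly the hard point, and you leave it unresolved: re-presenting $G$ on $\{\overline e_\alpha\}_{\alpha\in\Gamma}$ and verifying the substituted relations among the $\langle\omega_\alpha\rangle$ is not carried out, and you yourself flag the ``routing'' step as the one most likely to need care. The paper avoids this difficulty entirely by not defining the inverse map via the $\omega_\alpha$ at all. It takes the standard $\hh(X,x_0)$-coloring, replaces it (Lemma \ref{arbol}) by an equivalent coloring $c'$ that is trivial on $A$, and defines $\varphi:G\to\hh(X,x_0)$ on \emph{all} generators by $\varphi(\overline e_\alpha)=c'(e_\alpha)$, $\alpha\in\Lambda$. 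Well-definedness is then immediate (the defining relations of $G$ are precisely the admissibility identities, which the colors $c'(e_\alpha)$ satisfy because $c'$ is admissible and trivial on $D$), and surjectivity is immediate from connectedness of $c'$. The loops $\omega_\alpha$ are used only at the very end, to compute $W_{\hat c}W_{c'}^{-1}\varphi(\overline e_\alpha)=W_{\hat c}(\langle\omega_\alpha\rangle)=\overline e_\alpha$ for $\alpha\in\Gamma$; since $\Gamma$ generates, this forces $\varphi$ to be injective. Your $s$ is $W_{c'}^{-1}\circ\varphi$, and the only clean way to see it is well defined is to construct $\varphi$ first --- so you should reorganize the proof around a map defined on all of $\Lambda$ via a coloring, rather than on $\Gamma$ via the $\omega_\alpha$.
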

\begin{proof}
We construct first a $G$-coloring $\hat c$ of $X$. We color all the edges in $D$ with $1\in G$ and each edge $e_\alpha$ with $\overline{e}_\alpha$. This coloring is admissible by definition of $G$. Let $W_{\hat c}:\hh (X,x_0) \to G$ be the weight map induced by $\hat c$. Replacing, if necessary,  $\omega_\alpha$ by $\omega_\alpha ^{-1}$,  we have for every $\alpha \in \Gamma$, $$W_{\hat c}(\langle \omega _\alpha \rangle)=w(\omega _\alpha)=\overline{e}_\alpha.$$  

Let $c$ be the standard coloring of $X$. The induced weight $W_c:\hh (X,x_0) \to \hh(X, x_0)$ is the identity. By Lemma \ref{arbol}, there exists an $\hh (X,x_0)$-coloring $c'$ of $X$ equivalent to $c$ which is trivial in $A$. Since $c'\sim c$, the weight $W_{c'}$ induced by $c'$ is $W_c$ composed with an automorphism of $\hh (X,x_0)$. Hence $W_{c'}$ is an automorphism of $\hh (X,x_0)$.

Define $\varphi : G\to \hh (X,x_0)$ by $\varphi (\overline{e}_\alpha)=c'(e_\alpha)$ for every $\alpha \in \Lambda$. This homomorphism is well-defined since $c'$ is admissible and $\{\overline{e}_\alpha\}_{\alpha \in \Gamma}$ generates $G$. Moreover, since $c'$ is connected, $\hh (X,x_0)$ is generated by $\{c'(e_\alpha)\}_{\alpha \in \Lambda}$. Therefore, $\varphi$ is an epimorphism.

Let $\alpha \in \Gamma$. Since $\omega _\alpha$ passes through $e_\alpha$ only once and all the other edges in $\omega _\alpha$ have weight $1$ with respect to the coloring $c'$, then $W_{c'}(\langle \omega _\alpha \rangle)=c'(e_\alpha)$. Thus, $$W_{\hat c}W_{c'}^{-1}\varphi (\overline{e}_\alpha)=W_{\hat c}W_{c'}^{-1}(c'(e_\alpha))=W_{\hat c}(\langle \omega _\alpha \rangle)=\overline{e}_\alpha .$$
Then $W_{\hat c}W_{c'}^{-1}\varphi$ is the identity of $G$ and, in particular, $\varphi$ is injective. Therefore $\varphi$ is an isomorphism.       
\end{proof}

Therefore, in order to compute the fundamental group of a connected locally finite poset $X$, we choose a simply-connected subdiagram $D$ of $X$ satisfying the hypotheses of Theorem \ref{main3} (for example, a simply-connected subdiagram containing all the vertices of $X$). The generators of $\pi _1(X)$ are the edges which are not in $D$ and the relators are given by \textit{digons}. A digon in a poset $X$ is a subdiagram which is the union of two different monotonic edge-paths from a point $x$ to a point $y$. Moreover, in the presentation of the group it suffices to consider only the relations given by the \textit{simple digons}, i.e. digons in which the two chains have no vertex in common with the exception of $x$ and $y$. 

\medskip

This result can be applied to compute the fundamental group of any regular CW-complex by means of its face poset. Note that for any regular CW-complex $K$, $\x(K)$ is locally finite.

\begin{ej}
Consider the poset $X$ whose Hasse diagram is shown in Figure \ref{proy}. Its edges are the solid lines together with the dotted lines.
The subdiagram given by the solid lines corresponds to a simply-connected space $A$. It is easy to check that in fact $A$ is a contractible space (to prove this, we only have to show that it is possible to reduce the space $A$ to a point by removing beat points one by one). The group $G$ of Theorem \ref{main3} is then generated by the classes of the dotted edges $e_1, e_2, e_3, e_4, e_5$. There is a digon containing $e_2$ and $e_3$ which says that $\overline{e}_2=\overline{e}_3$ is one of the relations in the presentation of $G$. There is another digon which contains the edges $e_4$ and $e_1$ producing the relation $\overline{e}_4\overline{e}_1=1$. After checking all possible digons containing at least one dotted edge, we obtain the following admissibility relations: $\overline{e}_4\overline{e}_1=1$, $\overline{e}_2=\overline{e}_3$, $\overline{e}_2=\overline{e}_5$, $\overline{e}_1=\overline{e}_5$, $\overline{e}_3=\overline{e}_4$. Therefore, the group $G$ is isomorphic to the group $\mathbb{Z}_2$, 
generated by $\overline{e}_3$. By Theorem \ref{main3}, the fundamental group of $X$ is isomorphic to $\mathbb{Z}_2$.
\end{ej}

\begin{obs}
Consider the $G$-coloring $\hat{c}$ in the proof of Theorem \ref{main3}. The color of an edge $e$ is its class $\overline{e}$ in $G$. By the proof of Theorem \ref{main3}, the weight map $W_{\hat{c}}$ associated to this coloring is an isomorphism, which implies that this coloring corresponds to the trivial subgroup of $\pi_1 (X)$. Therefore its corresponding covering $E(\hat{c})$ is the universal covering $\tilde X$ of $X$. 
\end{obs}

\begin{obs} \label{siempre}
Given a connected locally finite poset $X$, it is always possible to find a subdiagram $D$ of the Hasse diagram of $X$ in such a way that the hypotheses of Theorem \ref{main3} are fulfilled. Namely, we can take $D$ such that its underlying undirected graph is a maximal tree of the underlying undirected graph of the Hasse diagram of $X$. It is easy to see that any closed edge-path in $D$ is equivalent to the trivial edge-path. Then $A$, the locally finite space corresponding to $D$, is simply-connected. Any edge of $X$ which is not in $D$ is contained in a closed edge-path with all the other edges in $D$. Therefore, it is possible to apply the theorem using $\Gamma =\Lambda$.

In fact, the locally finite space $A$ is contractible. When $A$ is finite this is clear using Proposition \ref{stong}. When $A$ is locally finite we can use Proposition \ref{stong} together with the standard idea of the proof of \cite[Proposition 1.A.1]{Hat} and the fact that $A$ has the weak topology with respect to its edges. Moreover, any subdiagram $Y$ of $X$ is contained in a subdiagram $\widetilde{Y}$ of $X$ which contains all the points of $X$ and such that the space $Y$ is a strong deformation retract of $\widetilde{Y}$. 
\end{obs}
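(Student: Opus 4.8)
\emph{Proof idea.} The plan is to prove the three assertions of the remark in turn: (a) one may take for $D$ a maximal tree of the undirected graph underlying the Hasse diagram of $X$, and then the hypotheses of Theorem \ref{main3} hold with $\Gamma=\Lambda$, so $\pi_1(X,x_0)\simeq G$; (b) the associated space $A$ is contractible, not merely simply-connected; (c) every subdiagram $Y$ of $X$ embeds in a subdiagram $\widetilde Y$ that contains all the vertices of $X$ and onto which $Y$ is a strong deformation retract. For (a): since $X$ is connected it is edge-path connected, so the undirected graph underlying its Hasse diagram is connected and, by Zorn's lemma, has a maximal (hence spanning) tree $T$; put $D=T$ with the orientations inherited from the Hasse diagram, so that $A$ is a connected locally finite $T_0$-space. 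As $\pi_1(A,x_0)\cong\hh(A,x_0)$, to see $A$ is simply-connected it suffices to check every closed edge-path $\xi$ of $A$ is equivalent to the empty one. Being a finite sequence, $\xi$ visits some vertex $v\ne x_0$ at maximal tree-distance from $x_0$; the vertices just before and just after that visit both lie at distance one less, hence, a tree vertex having a unique neighbour nearer $x_0$, they coincide, so $\xi$ contains a backtrack $(w,v)(v,w)$ or $(v,w)(w,v)$, which is elementary equivalent to the empty path. Removing it shortens $\xi$; induction on length finishes. Finally, since $D$ is spanning, each edge $e_\alpha\notin D$ occurs exactly once in the closed edge-path $\gamma_x e_\alpha \gamma_y^{-1}$, where $\gamma_x,\gamma_y$ are the unique $T$-paths from $x_0$ to the endpoints of $e_\alpha$, and this path uses no other edge outside $D$; so Theorem \ref{main3} applies with $\Gamma=\Lambda$.

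For (b) the local observation is that a degree-one vertex $v$ of a subdiagram is always a beat point of the associated locally finite space $B$: if its unique edge points up then $v$ is maximal in $B$ and $\hat U_v$ has a maximum (the other endpoint), dually if it points down; and since $v$ is extremal, the Hasse diagram of $B\smallsetminus\{v\}$ is the subdiagram minus $v$. When $A$ is finite, iterated pruning of leaves of $D$ reduces $A$ to a point, so $A$ is contractible by Proposition \ref{stong}. When $A$ is infinite I would build, by transfinite recursion, a well-ordering $v_0,v_1,\dots$ of the vertices of $D$ in which each $v_\lambda$ with $\lambda>0$ is joined to exactly one earlier vertex (possible because $T$ is a connected tree and at limit stages the union of the subtrees built so far is again a subtree), and set $A_\lambda$ to be the space of the subtree on $\{v_\mu:\mu\le\lambda\}$. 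By the argument of Proposition \ref{stong} each $A_{\lambda+1}$ strong deformation retracts onto $A_\lambda$ (the removed beat point being $v_\lambda$), and composing these retractions transfinitely gives a strong deformation retraction of $A=\bigcup_\lambda A_\lambda$ onto $\{v_0\}$; this last step is legitimate because $A$ carries the weak topology with respect to its edges and each edge is eventually fixed, exactly as in the proof of \cite[Proposition 1.A.1]{Hat}.

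For (c) I would run the same ``attach one leaf at a time'' procedure, but starting from $Y$ rather than from a point. Enumerate the vertices of $X$ outside $Y$ as $\{z_\lambda\}$ so that each $z_\lambda$ is joined by a single edge $e_\lambda$ of $X$ to a vertex already present in $Y\cup\{z_\mu:\mu<\lambda\}$; this is possible at each stage since the Hasse diagram of $X$ is connected, so whenever the current vertex set is proper some edge of $X$ leaves it. Let $\widetilde Y_\lambda$ be the subdiagram with vertices $Y\cup\{z_\mu:\mu\le\lambda\}$ and edges those of $Y$ together with the $e_\mu$, $\mu\le\lambda$. Each $z_\lambda$ has degree one in $\widetilde Y_\lambda$, hence is a beat point, so by the argument of Proposition \ref{stong} $\widetilde Y_\lambda$ strong deformation retracts onto the previous subdiagram; since each $z_\lambda$ stays a degree-one vertex in $\widetilde Y:=\bigcup_\lambda\widetilde Y_\lambda$, no chain of $\widetilde Y$ joining two vertices of $Y$ passes through a $z_\lambda$, and one checks from this that $Y$ is a subspace of $\widetilde Y$ with its original topology and order. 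Composing the retractions transfinitely, again using the weak topology of $\widetilde Y$ with respect to its edges, exhibits $Y$ as a strong deformation retract of $\widetilde Y$, and by construction $\widetilde Y$ contains every point of $X$.

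The elementary ingredients — connectedness of the undirected Hasse graph, existence of a spanning tree, the backtracking reduction, the beat-point test for a leaf, and the subspace check in (c) — are routine. \textbf{The real difficulty is the infinite case}: one must arrange the transfinite enumerations so that at limit stages the union is the intended subdiagram, and, above all, justify that the transfinite composite of the strong deformation retractions coming from Proposition \ref{stong} is continuous. This is precisely the point where the weak-topology property of locally finite posets with respect to their edges (the $A$-space analogue of the CW fact used in \cite[Proposition 1.A.1]{Hat}) must be invoked, and it has to be stated and applied with care, since otherwise the composed homotopy need not be continuous.
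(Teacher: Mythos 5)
Your strategy is exactly the one the paper intends for this remark: spanning tree plus backtrack reduction for simple connectivity, beat-point pruning for contractibility, and leaf-by-leaf attachment for extending $Y$ to a spanning $\widetilde Y$. Part (a) is complete and correct. But two steps in (b) and (c) do not work as written. First, in (c) the claim that ``each $z_\lambda$ stays a degree-one vertex in $\widetilde Y$'' is false: a later $z_\mu$ may be attached by its edge $e_\mu$ to $z_\lambda$, so the added vertices form a forest hanging off $Y$, not a set of isolated leaves. The conclusion you need (no monotonic edge-path of $\widetilde Y$ between two points of $Y$ passes through any $z_\lambda$, so that $Y$ keeps its original order and subspace topology) is still true, but the argument has to be: if such a path existed, take the largest index $\mu$ with $z_\mu$ on it; then all vertices of the path lie in $\widetilde Y_\mu$, where $z_\mu$ is incident to the single edge $e_\mu$, whereas an interior vertex of a monotonic path must be incident to two distinct edges.

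Second, and more seriously, the one-point-at-a-time transfinite composition does not by itself yield a homotopy parametrized by $[0,1]$ when the set of removed points is uncountable --- and a connected locally finite poset can be uncountable, since a point may be covered by infinitely many elements, so the Hasse diagram need not be locally finite as a graph. You flag this as ``the real difficulty'' but do not resolve it, and a transfinite chain of retractions of order type beyond $\omega$ cannot simply be reparametrized into the unit interval. The fix, which is what the proof of \cite[Proposition 1.A.1]{Hat} actually does and what the paper is pointing to, is to filter by tree-distance rather than one vertex at a time: put $A_0=\{x_0\}$ and let $A_{n+1}$ be $A_n$ together with all its neighbours in $D$. Since $D$ is a tree, no two vertices of $A_{n+1}\smallsetminus A_n$ are adjacent in $D$ and each has exactly one neighbour in $A_n$; hence each is extremal in $A_{n+1}$ and a beat point, and collapsing first all the new minimal vertices and then all the new maximal ones gives a strong deformation retraction $A_{n+1}\to A_n$ in two steps, each given by a retraction comparable with the identity. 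This reduces the whole composite to an $\omega$-indexed one, which is then glued on the intervals $[1/2^{n+1},1/2^n]$ using the weak topology of $A$ with respect to its edges, exactly as you describe. The same level-by-level device (measuring distance to $Y$ along the chosen forest) repairs the retraction of $\widetilde Y$ onto $Y$ in (c). With these two corrections your proof coincides with the paper's sketch.
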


\begin{ej}

Consider the poset $X_1$ of Figure \ref{simpcon}. The subdiagram $D$ given by the solid edges is a tree. The remaining edges are the generators for the presentation of $\pi _1(X_1)$ in the statement of Theorem \ref{main3}. 

\begin{figure}[h] 
\begin{center}
\includegraphics[scale=0.7]{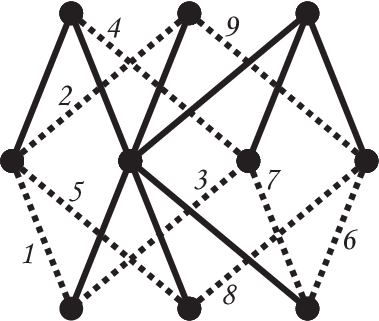}
\caption{A simply-connected space.}\label{simpcon}
\end{center}
\end{figure}

The dotted edge labeled with the number $1$ is contained in a digon with all the other edges in $D$. Therefore, the edge $1$ is the trivial element of $\pi _1(X_1)$. Edge $2$ is contained in a digon with all other edges representing the trivial element (two in $D$ and the other being edge $1$). In each step we can choose a new dotted edge contained in a digon whose edges are in $D$ or were already labeled. Therefore each dotted edge represents the trivial element and then $\pi _1(X_1)=0$.  

\bigskip

One last example $X_2$ appears in Figure \ref{piunoz}. As before, the subdiagram $D$ given by the solid edges is simply-connected and edges $1$ to $5$ represent the trivial element of $\pi _1(X_2)$.

\begin{figure}[h] 
\begin{center}
\includegraphics[scale=0.7]{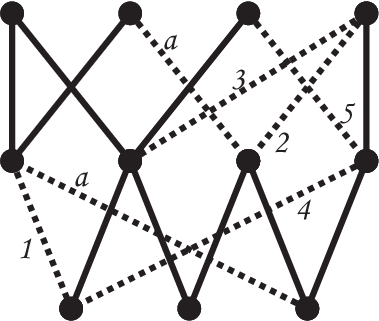}
\caption{A space with infinite cyclic fundamental group.}\label{piunoz}
\end{center}
\end{figure}

Two dotted edges remain after this process, labeled with the letter $a$. They are part of the same digon, and this relation says that they represent the same element of $\pi _1(X_2)$. None of these two edges is part of another digon, so $\pi _1(X_2)$ is the infinite cyclic group.

\end{ej}

\begin{obs}
There exists an analogue to Theorem \ref{main3} for simplicial complexes. Let $K$ be a simplicial complex. If $L$ is a simply-connected subcomplex containing all the vertices of $K$, the fundamental group of $K$ is isomorphic to the group generated by the ordered $1$-simplices of $K$ with the relations $e=1$ if $e$ is in $L$ and $e_0e_1e_2=1$ if $e_0+e_1+e_2$ is the boundary of a $2$-simplex of $K$. However by means of the poset $\x(K)$, our result allows one to manipulate the simplicial complex combinatorially (not only simplicially), resulting in better or more tractable presentations. The advantages of this discrete approach will be more clear later.
\end{obs}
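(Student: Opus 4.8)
The assertion of this remark is the classical edge-path (equivalently, one-vertex cellular) description of $\pi_1$ of a polyhedron, and the plan is to prove it by collapsing $L$. First I would reduce to a $2$-complex: the inclusion $K^{(2)}\hookrightarrow K$ is a $\pi_1$-isomorphism, and $L\cap K^{(2)}=L^{(2)}$ is again a simply-connected subcomplex containing every vertex of $K$ (the $2$-skeleton of a simply-connected complex is simply-connected, and $\pi_1$ only sees the $2$-skeleton), while the asserted presentation is literally the same for $(K,L)$ and for $(K^{(2)},L^{(2)})$; so I may assume $\dim K\le 2$. Since $L$ is simply-connected --- in particular nonempty and connected --- the CW-pair $(K,L)$ is a cofibration and van Kampen applied to $K/L\simeq K\cup_L CL$ gives an isomorphism $\pi_1(K)\cong\pi_1(K/L)$. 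Because $L$ contains all the vertices, $K/L$ has a single $0$-cell, one $1$-cell for each $1$-simplex of $K$ not in $L$, and one $2$-cell for each $2$-simplex of $K$ not in $L$, attached along the word obtained from its boundary $e_0+e_1+e_2$ by deleting the $e_i$ lying in $L$.

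The standard presentation of the fundamental group of a $2$-complex with one vertex then exhibits $\pi_1(K)$ as the group on the oriented $1$-simplices of $K$ not in $L$, with one relation $e_0e_1e_2=1$ per $2$-simplex of $K$, a letter $e_i\subseteq L$ being read as $1$ (so $2$-simplices inside $L$ contribute nothing). Adjoining a generator for each oriented $1$-simplex of $L$ together with the relation $e=1$ is a Tietze transformation, and this puts the presentation in exactly the form stated. This is the simplicial counterpart of Theorem \ref{main3}: alternatively one can run Theorem \ref{main3} directly on $X=\x(K)$, choosing $D$ to be the Hasse diagram of the open, simply-connected subspace $\x(L)\subseteq X$ enlarged by enough edges (a subdivided spanning tree of the $1$-skeleton of $K$, taken inside $L$) to make the space $A$ of $D$ simply-connected; the edges of $\ee(X)$ left out of $D$ then match, after a Tietze reduction, the oriented $1$-simplices of $K$ not in $L$, and the admissibility relations carried by the pairs of chains $v\prec m_{\{v,w\}}\prec b_\theta$ through the barycenter $b_\theta$ of a $2$-simplex $\theta$ assemble into its triangle word.

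I expect essentially no obstacle in the collapsing argument --- the one place simple-connectivity of $L$ is genuinely used (rather than, say, $L$ being only a maximal tree) is the step that $K\to K/L$ is a $\pi_1$-isomorphism. The real bookkeeping, and the main obstacle, appears only if one insists on the $\x(K)$ route favoured by the paper: the correspondence between digons of the Hasse diagram of $\x(K)$ and oriented $2$-simplices of $K$ is not a literal bijection, so one must verify that the presentation produced by Theorem \ref{main3} Tietze-reduces to the claimed one --- in particular that the generators coming from the subdivided tree are all eliminable --- and that $A$ is genuinely simply-connected, which for infinite $K$ needs the weak-topology argument of Remark \ref{siempre} on top of the beat-point reduction of Proposition \ref{stong}.
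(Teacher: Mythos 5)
Your argument is correct, but note that the paper offers no proof of this remark at all: it is stated as the classical edge-path presentation of $\pi_1$ of a polyhedron relative to a simply-connected spanning subcomplex (the result from \cite[Section 3.6]{Spa} already invoked in the Preliminaries), and the point of the remark is only to contrast it with Theorem \ref{main3}. Your route --- reduce to the $2$-skeleton, collapse $L$ via the cofibration $(K,L)$ and van Kampen on $K\cup_L CL$, read off the one-vertex CW presentation of $K/L$, and restore the generators of $L$ by a Tietze move --- is a clean, self-contained proof of the asserted fact and is sound; the only cosmetic gap is that the remark's relations, read literally, do not by themselves force $(v,w)(w,v)=1$ for an oriented $1$-simplex and its reverse, so one should either adopt the usual convention that reversing orientation inverts the generator or add the degenerate simplex relations as in Spanier, exactly as your one-generator-per-unoriented-cell bookkeeping implicitly does. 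Your sketch of the alternative derivation through $\x(K)$ and Theorem \ref{main3} is also essentially right, and you correctly identify where the real work lies on that route: the space $A$ of the chosen subdiagram must be verified simply-connected (Remark \ref{siempre} for infinite $K$), the hypothesis on the generating set $\Gamma$ and the loops $\omega_\alpha$ must be checked, and the digon relations through the three vertices of a $2$-simplex must be assembled and Tietze-reduced to the single triangle relation. Since the paper supplies no argument here, your proof is a strict addition rather than a variant of the paper's.
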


\begin{coro} \label{be}
Let $X$ be a connected locally finite poset and let $B$ be a subdiagram of the Hasse diagram of $X$ which corresponds to a simply-connected space and such that any maximal chain of $X$ has all its edges in $B$ except perhaps for one. Then the fundamental group of $X$ is free.
\end{coro}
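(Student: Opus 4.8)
The plan is to reduce everything to Theorem \ref{main3}. The first move is to replace $B$ by a \emph{spanning} subdiagram: by Remark \ref{siempre}, $B$ is contained in a subdiagram $\widetilde{B}$ of the Hasse diagram of $X$ which contains every point of $X$ and of which the space associated to $B$ is a strong deformation retract. In particular the space associated to $\widetilde{B}$ is simply-connected (and connected), so $D=\widetilde{B}$ is a legitimate choice in Theorem \ref{main3}; moreover, since $\widetilde{B}\supseteq B$, the edges of $X$ outside $\widetilde{B}$ form a subset of those outside $B$, so every maximal chain of $X$ still has at most one edge outside $\widetilde{B}$.

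Next I would verify the remaining hypothesis of Theorem \ref{main3} taking $\Gamma=\Lambda$, i.e. for every edge $e_\alpha=(u,v)$ of $X$ not in $\widetilde{B}$. Since $\widetilde{B}$ is connected and contains all points of $X$, one chooses edge-paths $\delta$ from $x_0$ to $u$ and $\delta'$ from $v$ to $x_0$ inside $\widetilde{B}$; then $\omega_\alpha=\delta\,(u,v)\,\delta'$ is a closed edge-path at $x_0$ containing $e_\alpha$ exactly once and no other edge outside $\widetilde{B}$. Theorem \ref{main3} then yields $\pi_1(X,x_0)\simeq G$, where $G$ is generated by the edges $e_\alpha\notin\widetilde{B}$, subject only to the admissibility relations attached to pairs of covering chains with equal endpoints.

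The crux is that $G$ is free. Any covering chain $x=x_1\prec x_2\prec\cdots\prec x_r=y$ extends, by local finiteness and Zorn's lemma, to a maximal chain of $X$, whose segment from $x$ to $y$ is exactly this chain (nothing lies strictly between consecutive $x_i\prec x_{i+1}$); by hypothesis this segment has at most one edge outside $B$, hence at most one outside $\widetilde{B}$. So in each relation $\prod_{(x_i,x_{i+1})\notin\widetilde{B}}(x_i,x_{i+1})=\prod_{(x'_i,x'_{i+1})\notin\widetilde{B}}(x'_i,x'_{i+1})$, each side is empty (hence $1$) or a single generator, and every relation reads $e_\alpha=e_\beta$, $e_\alpha=1$, or $1=1$. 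A presentation with only such relations presents a free group --- the relations generate an equivalence relation on the generators together with the symbol $1$, and $G$ is free on the classes distinct from that of $1$, via a sequence of Tietze transformations --- so $\pi_1(X,x_0)\cong G$ is free. The step I expect to be the main obstacle is this interface with Theorem \ref{main3}: one must know that enlarging $B$ to a spanning subdiagram is harmless (Remark \ref{siempre}) and that afterwards the loop-building hypothesis $\Gamma=\Lambda$ really holds --- which is precisely why one works with a spanning $\widetilde{B}$ rather than with $B$ itself, since for a non-spanning $B$ the loops $\omega_\alpha$ need not exist; the surrounding chain combinatorics is then routine.
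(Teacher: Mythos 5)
Your proposal is correct and follows essentially the same route as the paper: enlarge $B$ to a spanning simply-connected subdiagram via Remark \ref{siempre}, apply Theorem \ref{main3} with $\Gamma=\Lambda$, and observe that since every saturated chain sits inside a maximal chain, each admissibility relator involves at most one generator on each side, so the presentation is that of a free group. You merely make explicit some steps the paper leaves implicit (the existence of the loops $\omega_\alpha$, the extension of a covering chain to a maximal chain, and the Tietze argument), so there is nothing to correct.
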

\begin{proof}
We can assume that $B$ contains all the points of $X$ by Remark \ref{siempre} and then Theorem \ref{main3} applies. Since any monotonic edge-path in $X$ has at most one edge not in $B$ then the relators of the presentation of $\pi _1(X)$ either identify two generators or identify a generator with the trivial element. Therefore, $\pi _1 (X)$ is free.
\end{proof}

\begin{teo} \label{nuestrovk}
Let $X$ be a connected locally finite $T_0$-space. Let $A$ and $B$ be two connected subdiagrams of the Hasse diagram of $X$ such that every edge of $X$ is in $A$ or $B$. Suppose that the diagram $C=A\cap B$ of common vertices and common edges is connected. Let $x_0 \in C$ and let $i: C\to A$, $j:C\to B$ be the canonical inclusions. Let $N\leq \pi _1(A,x_0)*\pi _1(B,x_0)$ be the normal subgroup generated by the words $i_*([\gamma])j_*([\gamma])^{-1}$, for every $[\gamma] \in \pi _1 (C,x_0)$. Then there exists an epimorphism $(\pi _1(A,x_0)*\pi _1(B,x_0))/N \to \pi _1(X,x_0)$. Moreover, if each simple digon of $X$ is contained in $A$ or in $B$, then $\pi _1 (X,x_0)$ is isomorphic to $(\pi _1(A,x_0)*\pi _1(B,x_0))/N$.
\end{teo}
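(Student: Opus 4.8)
The plan is to derive the four fundamental groups $\pi_1(C,x_0)$, $\pi_1(A,x_0)$, $\pi_1(B,x_0)$ and $\pi_1(X,x_0)$ from the combinatorial presentation of Theorem \ref{main3}, using \emph{compatible} maximal trees for $C$, $A$, $B$ and $X$, and then to compare presentations. (We may assume $X$ has at least one edge, the one‑point case being trivial; then, since $X$ is connected and every edge lies in $A$ or $B$, every vertex of $X$ lies in $A$ or $B$, $\ee(X)=\ee(A)\cup\ee(B)$, and $C$ has vertex set and edge set equal to the intersections of those of $A$ and $B$.) First I would fix a maximal tree $T_C$ of the underlying graph of $C$ and extend it to maximal trees $T_A$ of $A$ and $T_B$ of $B$. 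Any such extension satisfies $T_A\cap\ee(C)=T_C=T_B\cap\ee(C)$, since an edge of $\ee(C)\smallsetminus T_C$ joins two points already connected within $T_C$ and so cannot lie on the tree $T_A$ (or $T_B$). Hence $T_A\cap T_B=T_C$, and $T_X:=T_A\cup T_B$ is a maximal tree of $X$: it is connected and spans $X$, and it is acyclic because a union of two subtrees meeting in a connected subtree is a tree — a reduced cycle in $T_A\cup T_B$ would contain a nonempty maximal run of edges of $T_B\smallsetminus T_C$, whose two endpoints, lying in both $A$ and $B$ hence in $C$, would be joined by a path in the tree $T_C$, and the uniqueness of paths in the tree $T_B$ would force the run to coincide with that path, a contradiction. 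It follows that $T_A=T_X\cap\ee(A)$ and $T_B=T_X\cap\ee(B)$. By Remark \ref{siempre} the spaces attached to these maximal trees are contractible, in particular simply connected, so Theorem \ref{main3} applies (with $\Gamma=\Lambda$) to each of $A$, $B$ and $X$.

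Write $\Lambda_A=\ee(A)\smallsetminus T_A$ and similarly $\Lambda_B$, $\Lambda_C$, $\Lambda_X$; the identities above give $\Lambda_X=\Lambda_A\cup\Lambda_B$ and $\Lambda_A\cap\Lambda_B=\Lambda_C$. By Theorem \ref{main3}, together with the observation following it that it suffices to impose the relations coming from the \emph{simple} digons, $\pi_1(A,x_0)$ is presented by the generating set $\Lambda_A$ and the relator set $R_A$ given by the simple digons of $A$, and likewise $\pi_1(B,x_0)$ by $(\Lambda_B,R_B)$ and $\pi_1(X,x_0)$ by $(\Lambda_X,R_X)$. Since all edges of a simple digon of $A$ (resp. of $B$) lie in $\ee(A)$ (resp. $\ee(B)$), such a digon is also a simple digon of $X$, and — using $T_A=T_X\cap\ee(A)$ (resp. $T_B=T_X\cap\ee(B)$) — the associated relator is literally the same word; hence $R_A\cup R_B\subseteq R_X$.

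Next I would identify $N$. Using the \emph{same} tree paths $\gamma_x\in T_C$ for the points $x\in C$ in all three presentations (legitimate since $T_C\subseteq T_A\cap T_B$ and a path between two given points in a tree is unique), one checks that $i_*$ and $j_*$ send the element of $\pi_1(C,x_0)$ represented by $\gamma_x e\gamma_y^{-1}$, for an edge $e=(x,y)\in\Lambda_C$, to the generator $e$ of $\pi_1(A,x_0)$, respectively of $\pi_1(B,x_0)$. As these elements generate $\pi_1(C,x_0)$, the subgroup $N$ is the normal closure in $\pi_1(A,x_0)*\pi_1(B,x_0)$ of the words $e^{(A)}(e^{(B)})^{-1}$, $e\in\Lambda_C$, where $e^{(A)},e^{(B)}$ denote the two copies of $e$; a Tietze transformation identifying these copies then shows that $(\pi_1(A,x_0)*\pi_1(B,x_0))/N$ is presented by the generating set $\Lambda_A\cup\Lambda_B=\Lambda_X$ and the relator set $R_A\cup R_B$.

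Finally I would compare the presentations $(\Lambda_X,R_A\cup R_B)$ of $(\pi_1(A,x_0)*\pi_1(B,x_0))/N$ and $(\Lambda_X,R_X)$ of $\pi_1(X,x_0)$. Since $R_A\cup R_B\subseteq R_X$, the identity on $\Lambda_X$ induces a well‑defined homomorphism $(\pi_1(A,x_0)*\pi_1(B,x_0))/N\to\pi_1(X,x_0)$ — which is exactly the map induced by the inclusions of $A$ and $B$ into $X$ — and it is onto since it hits every generator: this is the asserted epimorphism. If, in addition, every simple digon of $X$ lies in $A$ or in $B$, then each element of $R_X$ already belongs to $R_A\cup R_B$, so the identity on $\Lambda_X$ also induces a homomorphism $\pi_1(X,x_0)\to(\pi_1(A,x_0)*\pi_1(B,x_0))/N$; the two are mutually inverse, proving $\pi_1(X,x_0)\cong(\pi_1(A,x_0)*\pi_1(B,x_0))/N$. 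I expect the main obstacle to be the bookkeeping of the four spanning trees: one must choose them coherently and verify, in the locally finite setting, that $T_X=T_A\cup T_B$ is again a maximal tree and hence that the generating and relator sets split as $\Lambda_X=\Lambda_A\cup\Lambda_B$ and $R_A\cup R_B\subseteq R_X$ (with equality under the digon hypothesis); once this is arranged, the remaining steps are routine manipulations of group presentations.
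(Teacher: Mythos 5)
Your proof is correct and follows essentially the same route as the paper's: both apply Theorem \ref{main3} to $C$, $A$, $B$ and $X$ with compatibly chosen simply-connected spanning subdiagrams (you build nested maximal trees explicitly, the paper invokes Remark \ref{siempre} to extend $D_C$ to $D_A$ and $D_B$), identify $N$ so that $(\pi_1(A,x_0)*\pi_1(B,x_0))/N$ has presentation $\newlangle \Lambda_X \mid R_A\cup R_B\newrangle$, and compare with the presentation of $\pi_1(X,x_0)$, whose extra relators come only from simple digons lying in neither $A$ nor $B$. Your tree bookkeeping and Tietze-transformation step are just a more explicit rendering of the paper's pushout-of-presentations argument.
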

\begin{proof}
By Remark \ref{siempre} there exists a subdiagram $D_C$ of $C$ which is simply-connected and contains all the vertices of $C$. Moreover, there exist subdiagrams $D_A$ and $D_B$ of $A$ and $B$ containing all the vertices of $A$ and of $B$ respectively, which strong deformation retract into $D_C$. Therefore $D_A\cup D_B$ is a simply-connected subdiagram of $X$ and we can apply Theorem \ref{main3} to obtain presentations of $\pi _1 (C)$, $\pi _1 (A)$, $\pi _1 (B)$ and $\pi _1 (X)$. The presentation of $\pi _1(C)$ is  $<G_C|R_C>$, where $G_C$ is the set of edges in $C$ which are not in $D_C$, and there is a relator  for each digon in $C$. The presentations of $\pi_1 (A)$, $\pi _1 (B)$ and $\pi _1 (X)$ are $<G_C\cup G_A| R_C\cup R_A>$, $<G_C\cup G_B| R_C\cup R_B>$ and $<G_C\cup G_A \cup G_B| R_C\cup R_A \cup R_B \cup R_X>$ respectively. Here $G_A$ is the set of edges of $A$ which are not in $D_A\cup C$ and the relators $R_A$ are given by digons in $A$ which are not in $C$. $G_B$ and $R_B$ are defined similarly. The relators in $R_X$ are given by digons of $X$ which are neither in $A$ nor in $B$. Note that the following diagram

\begin{displaymath}
\xymatrix@C=23pt{ <G_C|R_C> \ar@{->}^{\alpha}[r]  \ar@{->}^{\beta}[d] & <G_C\cup G_A| R_C\cup R_A> \ar@{->}[d] \\
                  <G_C\cup G_B| R_C\cup R_B> \ar@{->}[r] & <G_C\cup G_A\cup G_B| R_C\cup R_A \cup R_B> }
\end{displaymath}
in which every homomorphism maps each generator to itself, is a pushout. Moreover, $W_{\hat{c}}i_*=\alpha W_{\hat{c}}$ and $W_{\hat{c}}j_*=\beta W_{\hat{c}}$, where $W_{\hat{c}}$ denotes the three isomorphisms $\hh (C, x_0) \to <G_C|R_C>$, $\hh (A, x_0) \to <G_C\cup G_A|R_C\cup R_A>$ and $\hh (B, x_0) \to <G_C\cup G_B|R_C\cup R_B>$ constructed in the proof of Theorem \ref{main3}. By Remark \ref{naturalidad}, $<G_C\cup G_A\cup G_B| R_C\cup R_A \cup R_B>$ is isomorphic to $(\pi _1(A,x_0)*\pi _1(B,x_0))/N$.

 If each simple digon of $X$ is contained in $A$ or $B$, $(\pi _1(A,x_0)*\pi _1(B,x_0))/N = <G_C\cup G_A\cup G_B| R_C\cup R_A \cup R_B> \to <G_C\cup G_A \cup G_B| R_C\cup R_A \cup R_B \cup R_X>=\pi _1(X,x_0)$ is an isomorphism.  
\end{proof}

The last result generalizes van Kampen's theorem. If $\{ A,B\}$ is an open covering of a locally finite $T_0$-space $X$, with $A,B$ and $A\cap B$ connected, then every digon of $X$ is contained in $A$ or $B$, so the result reduces to the classical van Kampen's theorem. Also if $K$ is a regular CW-complex covered by two connected subcomplexes $L, M$ with connected intersection, then $\x (L)$ and $\x (M)$ are open subspaces of $\x (K)$ and every digon of $\x (K)$ is in one of the subspaces. However, our result allows one to work also with  non-simplicial combinatorial decompositions $\{A,B\}$ of $\x (K)$, obtaining information on the fundamental group of $K$ from the fundamental groups of the ``discrete parts'' $A$ and $B$.

\bigskip

We finish this section with a result that characterizes posets with abelian fundamental group in terms of colorings. Given a $G$-coloring $c$ of $X$ we denote by $c^{-1}$ the $G$-coloring defined by $c^{-1}(x,y)=c(x,y)^{-1}$ for every edge $(x,y)\in \ee (X)$.

\begin{teo}
Let $X$ be a connected locally finite $T_0$-space and let $x_0\in X$. The following are equivalent:
\begin{itemize}
\item[(i)] $\pi _1 (X,x_0)$ is abelian.
\item[(ii)] For every group $G$ and every admissible connected $G$-coloring $c$ of $X$, $c^{-1}$ is an admissible and connected $G$-coloring of $X$.
\end{itemize}
\end{teo}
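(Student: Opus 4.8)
This is an equivalence; I would establish the two implications separately, the forward one being routine and the converse carrying the real content.

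\emph{(i)$\Rightarrow$(ii).} Let $c$ be an admissible connected $G$-coloring of $X$. By Corollary~\ref{colorepi} there is an epimorphism $\pi_1(X,x_0)\to G$, so if $\pi_1(X,x_0)$ is abelian then $G$ is abelian. In an abelian group inversion is a homomorphism, so for every edge-path $\xi=(x_0,x_1)\cdots(x_{n-1},x_n)$ one has
\[ w_{c^{-1}}(\xi)=\prod_{i=0}^{n-1}c(x_i,x_{i+1})^{-1}=\Bigl(\prod_{i=0}^{n-1}c(x_i,x_{i+1})\Bigr)^{-1}=w_c(\xi)^{-1}. \]
Hence if $\xi,\xi'$ are monotonic edge-paths with common endpoints then $w_c(\xi)=w_c(\xi')$ forces $w_{c^{-1}}(\xi)=w_{c^{-1}}(\xi')$, so $c^{-1}$ is admissible; and given $g\in G$, choosing a closed edge-path $\xi$ at $x_0$ with $w_c(\xi)=g^{-1}$ yields $w_{c^{-1}}(\xi)=g$, so $c^{-1}$ is connected.

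\emph{(ii)$\Rightarrow$(i).} I argue contrapositively: assuming $\pi_1(X,x_0)$ is not abelian, I exhibit a group $G$ and an admissible connected $G$-coloring $c$ of $X$ with $c^{-1}$ not admissible or not connected. The mechanism is that $w_{c^{-1}}(\xi)=\prod_i c(x_i,x_{i+1})^{-1}$ is the product of the inverted colors \emph{in the same order}, which in a non-abelian group differs from $\bigl(\prod_i c(x_i,x_{i+1})\bigr)^{-1}$, and one exploits this in one of two ways. (a) If $X$ carries a digon, i.e.\ two monotonic edge-paths $\xi,\xi'$ from some $x$ to some $y$, such that for the standard $\pi_1(X,x_0)$-coloring $c_0$ the two factorizations of the common weight $w_{c_0}(\xi)=w_{c_0}(\xi')$ fail the ``reversed'' identity, then $w_{c_0^{-1}}(\xi)\ne w_{c_0^{-1}}(\xi')$ and $c_0^{-1}$ is already not admissible. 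By Theorem~\ref{main3} (and Remark~\ref{siempre}) a poset with no digon relations has free fundamental group, so this case covers every $X$ with non-free $\pi_1$, once one checks the reversed identity genuinely fails there. (b) Otherwise — notably when $X$ has height $\le 1$, where admissibility is automatic and one must attack connectedness instead — one chooses a non-abelian quotient $G$ of $\pi_1(X,x_0)$ (for free non-abelian $\pi_1$ one may take $G=S_3$), realizes the corresponding regular covering by an admissible connected $G$-coloring (Theorems~\ref{main1} and~\ref{main2}), and then, using the freedom in the choice of the connecting edge-paths as in Lemma~\ref{arbol} and Remark~\ref{obspi2}, replaces it by an equivalent but \emph{twisted} coloring $c$ so arranged that the $c^{-1}$-weights $w_{c^{-1}}(\omega_\alpha)$ of the generating loops at $x_0$ all land in a fixed proper subgroup of $G$. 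Since the image of the weight map of $c^{-1}$ is generated by these elements, $c^{-1}$ is not connected, and (ii) fails.

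I expect the converse to be the genuine obstacle: one must ensure that for \emph{every} poset with non-abelian fundamental group at least one of mechanisms (a), (b) applies and can be carried out — in (a) exhibiting an honest non-commuting digon, and in (b) solving, inside a suitable non-abelian quotient, the equations that push the inverse-weights into a proper subgroup while keeping $c$ itself connected. The forward direction, by contrast, needs only the observation that inversion is multiplicative precisely for abelian groups together with the constraint of Corollary~\ref{colorepi} that the admissible coefficient groups are quotients of $\pi_1(X,x_0)$.
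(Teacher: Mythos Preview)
Your forward direction (i)$\Rightarrow$(ii) is correct and matches the paper's argument.

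For (ii)$\Rightarrow$(i) you have identified the right two-case split --- a digon exists or it does not (hence $\pi_1$ is free) --- and in the free case your choice of $S_3\simeq D_3$ as target group is exactly what the paper does. But your mechanism~(a) has a genuine gap. You propose to take the \emph{standard} coloring $c_0$ and find a digon on which $c_0^{-1}$ fails admissibility. There is no reason this should happen: for a given digon the two monotonic paths may well have $c_0$-weights that factor in a way where the ``reversed'' products agree (for instance, many edges could carry the identity). You never explain why non-abelianness of $\pi_1$ forces some digon to witness a failure for $c_0$, and your phrase ``once one checks the reversed identity genuinely fails there'' is precisely the step that is missing.

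The paper's idea in this case is different and sharper. Fix any simple digon $D$ once and for all. Given \emph{arbitrary} $g,h\in\pi_1(X,x_0)$, start from an admissible connected $\pi_1(X,x_0)$-coloring, use Lemma~\ref{arbol} to make it trivial on $D$, and then pass to an equivalent coloring $c''$ by choosing the constants $g_x$ of Definition~\ref{defequi} to be non-trivial only at two vertices of $D$, so that the two chains of $D$ carry weights $1\cdot g\cdot (hg)^{-1}$ and $(hg)^{-1}$ respectively. Admissibility of $c''$ is automatic (it is equivalent to the original), and admissibility of $(c'')^{-1}$ on this single digon yields $gh=hg$. Since $g,h$ were arbitrary, $\pi_1$ is abelian. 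The point is that one does not search for a ``bad'' digon for a fixed coloring; one \emph{plants} the desired non-commutation test into a fixed digon by exploiting the equivalence relation.

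For mechanism~(b) your plan is in the right spirit but is only a plan. The paper makes it concrete: locate two simple closed edge-paths $\xi,\xi'$ with an edge $e_0$ and an adjacent vertex $x$ of $\xi$ disjoint from $\xi'$, and an edge $e_0'$ of $\xi'$ not in $\xi$ and not touching $x$; color $e_0$ by $r$ or $r^2$, all other edges at $x$ by $sr^2$, $e_0'$ by $s$, everything else by $1$ in $D_3$. Connectedness of $c$ is checked from the weights of $\xi$ and $\widetilde{\xi'}$; then $c^{-1}$ differs from $c$ only on $e_0$, and one exhibits an equivalent coloring whose only non-trivial colors are $s$ on $e_0$ and $e_0'$, so its image lies in $\{1,s\}$ and $c^{-1}$ is not connected. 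Your ``push the inverse-weights into a proper subgroup'' is exactly this, but the combinatorial control over which edges to color (and the preliminary lemma extracting suitable $\xi,\xi'$ from two simple cycles) is the substance that your sketch omits.
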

\begin{proof}
If $\pi_1 (X,x_0)$ is abelian and $c$ is an admissible connected $G$-coloring of $X$, then $G$ is abelian by Corollary \ref{colorepi}. Then the inverse map $G\to G$ is a homomorphism and therefore $c^{-1}$ is equivalent to $c$. In particular it is admissible and connected.
Conversely, suppose that (ii) holds. We consider two cases: when $X$ has at least one digon or when $X$ has no digon. In the first case, let $D$ be a simple digon which is the union of the chains $x=x_0\prec x_1 \prec \ldots \prec x_k=y$ and $x=x_0' \prec x_1' \prec \ldots \prec x_l'=y$. Let $g,h \in \pi _1(X,x_0)$. Let $c$ be an admissible connected $\pi _1 (X,x_0)$-coloring of $X$. Since $D$ is the diagram of a simply-connected space, by Lemma \ref{arbol} there exists a coloring $c'$ equivalent to $c$ which is trivial in $D$. We consider the coloring $c''$ obtained from $c'$ when choosing, following the notations of Definition \ref{defequi}, $g_{x_{k-1}}=g$, $g_y=hg$, all the other $g_z=1$, and $\varphi =1_{\pi_1 (X,x_0)}$. This coloring is admissible and connected and then, by hypothesis, $(c'')^{-1}$ is also admissible. The admissibility of $(c'')^{-1}$ in the digon $D$ says that $gh=hg$. Thus, $\pi _1 (X,x_0)$ is abelian.

Assume now that $X$ has no digons. In this case, by Theorem \ref{main3}, $\pi _1(X,x_0)$ is a free group. Suppose $\pi _1 (X,x_0)$ is not abelian. Then it is a free group on at least two generators. We claim that there exist two closed edge-paths (not necessarily at $x_0$ nor at the same base point) $\xi=e_0e_1\ldots e_k$, $\xi'=e_0'e_1'\ldots e_l'$ which are simple (i.e. any vertex is in at most two edges of each path) and such that $e_0$ and $e_1$ are not edges of $\xi'$, with any orientation, and $e_0'$ is not an edge of $\xi$, with any orientation. Moreover $e_0 '$ is not adjacent to $x$, the common vertex of $e_0$ and $e_1$. Since $\pi _1(X,x_0)$ is not cyclic, the underlying undirected graph of the Hasse diagram of $X$ has at least two simple cycles $\xi$, $\xi '$. If there exists a vertex $x$ of $\xi$ which is not a vertex of $\xi '$, then we take $e_0$ and $e_1$ as its adjacent edges in $\xi$ and $e_0'$ as any edge of $\xi '$ which is not in $\xi$. In the case that $\xi$ and $\xi '$ have exactly the 
same set of vertices, take any edge $e$ of $\xi '$ not in $\xi$, then in the subgraph of edges $e, e_0, e_1, \ldots , e_k$ there are three simple cycles and at least two of them have different length. In the longest, there is a vertex which is not in the other and we can reason as above.

Consider now the Dihedral group $D_3=< s,r \ | \ s^2, r^3, (rs)^2 >$. We will show that there exists an admissible and connected $D_3$-coloring $c$ of $X$ such that $c^{-1}$ is not connected.
Let $x_1$ be the vertex of $e_0$ different from $x$. If $x_1\prec x$, color $e_0$ with color $r$, if $x\prec x_1$, color it with $r^2$. Color all the remaining edges adjacent to $x$ with color $sr^2$. Color $e_0'$ with $s$, and the rest of the edges of $X$ with the trivial color $1$. This coloring $c$ is admissible since $X$ has no digons. The weight of $\xi$ is $rsr^2=sr$. Take $\gamma$ the shortest edge-path from $x_1$ to $x_1 '$, the base vertex of $\xi'$, and define the closed edge-path at $x_1$, $\widetilde{\xi '}=\gamma \xi ' \gamma ^{-1}$. Then the weight of $\widetilde{\xi '}$ is $s$ or $r sr^2 s (rsr^2)^{-1}=sr^2$, depending on if $e_0$ is in $\gamma$ or not. In any case $\{sr, s\}$ and $\{sr, sr^2\}$ are generating sets of $D_3$, which proves that $c$ is connected. On the other hand, the coloring $c^{-1}$ is not connected. It coincides with $c$ in each edge of $X$ with exception of $e_0$. Taking $g_{x}=sr^2$ and all the other $g_z$ trivial, we obtain a coloring $c'$ equivalent to $c^{-1}$ 
such that $c'(e_0)=c'(e_0')=s$ while all the other edges of $X$ are colored with $1$. Then $c'$ is not connected, and therefore neither is $c^{-1}$.
\end{proof} 

\section{Colorings and $\pi_1$  on maps}\label{maps}

In this section we characterize, in terms of colorings, the maps of posets which induce sections, epimorphisms or the trivial map between the fundamental groups.
In some cases we prove first the result for inclusions and then we achieve the general result by considering a discrete analogue of the mapping cylinder.
If $f:X\to Y$ is a map between posets, then the \textit{non-Hausdorff mapping cylinder} $\widetilde{B}(f)$ is the poset whose underlying set is the disjoint union of $X$ and $Y$ keeping the given ordering within $X$ and $Y$,
and setting $y<x$ for $x\in X$ and $y\in Y$ if $y \le f(x)$. The map $r:\widetilde{B}(f)\to Y$ which maps $x$ to $f(x)$ for every $x\in X$ and $y$ to $y$ for each $y\in Y$ is a homotopy equivalence 
(if $j:Y \to \widetilde{B}(f)$ is the canonical inclusion, the homotopy which coincides with $jr$ for $t<1$ and with $1_{\widetilde{B}(f)}$ for $t=1$ is continuous). 
This allows us to replace $f_*$ by the map $i_*$ induced by the inclusion of $X$ in the cylinder.  In \cite{Bar2, BM} we considered a slightly different version $B(f)$ of the cylinder. In fact, $\tilde B(f)=B(f^{op})^{op}$. Note that if $f:X\to Y$ is a map between locally finite $T_0$-spaces, $\widetilde{B}(f)$ is locally finite.

\begin{lema} \label{extender}
Let $A$ be a connected space corresponding to a subdiagram $D$ of the Hasse diagram of a connected locally finite $T_0$-space $X$ and let $x_0\in A$. Let $G$ be a group and $c, c'$ two admissible $G$-colorings of $A$. If $c$ extends to an admissible $G$-coloring $\widetilde{c}$ of $X$, then $c'$ extends to an admissible $G$-coloring $\widetilde{c'}$ of $X$ which is equivalent to $\widetilde{c}$.
\end{lema}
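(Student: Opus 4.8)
The plan is to build $\widetilde{c'}$ from the given extension $\widetilde c$ by transporting it along the gauge transformation that relates $c'$ to $c$ on $A$, declared to be trivial off $A$. So the first step is to fix an automorphism $\varphi\colon G\to G$ and elements $g_a\in G$ for $a\in A$ with
\[
c'(a_1,a_2)=\varphi\bigl(g_{a_1}\,c(a_1,a_2)\,g_{a_2}^{-1}\bigr)\qquad\text{for every }(a_1,a_2)\in\ee(A),
\]
i.e.\ data witnessing that $c$ and $c'$ are equivalent as colorings of $A$. If one prefers to start only from the fact that $c$ and $c'$ induce the same homomorphism $\hh(A,x_0)\to G$, such data is produced as in the proof of Theorem \ref{main1}: choose edge-paths $\gamma_a$ in $A$ from $x_0$ to $a$, observe that for an edge $(a_1,a_2)\in\ee(A)$ one has $W_c(\langle\gamma_{a_1}(a_1,a_2)\gamma_{a_2}^{-1}\rangle)=w_c(\gamma_{a_1})\,c(a_1,a_2)\,w_c(\gamma_{a_2})^{-1}$ and likewise for $c'$, so both $c$ and $c'$ are equivalent (with $\varphi=1_G$) to the common standard coloring of $A$ determined by that homomorphism.

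Next I would put $g_x:=1_G$ for $x\in X\smallsetminus A$, keep the chosen $g_a$ for $a\in A$, and define $\widetilde{c'}\colon\ee(X)\to G$ by $\widetilde{c'}(x,y):=\varphi\bigl(g_x\,\widetilde c(x,y)\,g_y^{-1}\bigr)$. Then I would check the three required properties, all of them routine. First, $\widetilde{c'}$ is by construction obtained from $\widetilde c$ via the automorphism $\varphi$ and the family $(g_x)_{x\in X}$, hence $\widetilde{c'}\sim\widetilde c$ in the sense of Definition \ref{defequi}. Second, $\widetilde c$ is admissible by hypothesis and equivalence of colorings preserves admissibility (as noted after Definition \ref{defequi}), so $\widetilde{c'}$ is admissible. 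Third, for $(a_1,a_2)\in\ee(A)$ we have $\widetilde c(a_1,a_2)=c(a_1,a_2)$, whence $\widetilde{c'}(a_1,a_2)=\varphi\bigl(g_{a_1}c(a_1,a_2)g_{a_2}^{-1}\bigr)=c'(a_1,a_2)$; thus $\widetilde{c'}$ extends $c'$.

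I do not anticipate a serious obstacle: once the discrepancy between $c$ and $c'$ is encoded by a gauge transformation supported on $A$, extending that transformation by the identity automatically preserves the equivalence with $\widetilde c$ (it is built into the defining formula) and does not affect the restriction to $A$ (where the newly assigned values $g_x=1_G$ play no role). The only point that needs a word of justification is the reduction carried out in the first step — that the relation between $c$ and $c'$ lives on $A$ alone — which is either the hypothesis itself or the short standard-coloring computation indicated above.
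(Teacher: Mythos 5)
Your proof is correct and is essentially the paper's own argument: the paper likewise fixes $(\varphi,\{g_a\}_{a\in A})$ witnessing the equivalence of $c$ and $c'$ on $A$, extends the family by $h_x=1$ for $x\notin A$, and sets $\widetilde{c'}(x,y)=\varphi\bigl(h_x\,\widetilde{c}(x,y)\,h_y^{-1}\bigr)$, with the same three routine verifications. Note that both you and the paper rely on the hypothesis that $c\sim c'$, which is tacit in the statement as printed; your side remark that this equivalence can be recovered when the two colorings induce the same weight homomorphism on $\hh(A,x_0)$ is a correct (if not strictly needed) supplement.
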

\begin{proof}
Since $c$ and $c'$ are equivalent, there exist an automorphism $\varphi : G\to G$ and a family $\{g_x\}_{x\in A}$ of elements of $G$ such that $c'(x,y)=\varphi (g_x c(x,y) g_y^{-1})$ for every $(x,y)\in \ee (A)$. Define a $G$-coloring of $X$ by $\widetilde{c'}(x,y)=\varphi (h_x \widetilde{c}(x,y) h_y^{-1})$ for every $(x,y)\in \ee (X)$, where $h_x=g_x$ if $x\in A$ and $h_x=1$ otherwise. Then $\widetilde{c'}\sim \widetilde{c}$ and it extends $c'$.
\end{proof}

Given an admissible $G$-coloring $c$ of a locally finite poset $X$ and any two elements $x,x'\in X$ such that $x\le x'$, we will denote by $c(x,x')$ the weight of any monotonic edge-path from $x$ to $x'$. If $x=x'$, then $c(x,x')=1$, the identity of $G$.

\begin{teo} \label{teoseccion}
Let $f:X\to Y$ be a continuous map between connected locally finite $T_0$-spaces and let $x_0\in X$. Then the following are equivalent

\noindent (i) The homomorphism $f_*:\pi _1 (X,x_0) \to \pi _1(Y, f(x_0))$ is a section.

\noindent (ii) For every group $G$ and every admissible connected $G$-coloring $c$ of $X$, there exist an admissible $G$-coloring $\widetilde{c}$ of $Y$ and $g_x\in G$ for each $x\in X$ such that $$\widetilde{c}(f(x),f(x'))=g_xc(x,x')g_{x'}^{-1}$$ for every edge $(x,x')\in \ee(X)$.
\end{teo}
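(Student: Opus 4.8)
The plan is to prove both implications using the non-Hausdorff mapping cylinder $\widetilde{B}(f)$ to reduce everything to the case of an inclusion, and then exploiting the characterization of $\pi_1$ in terms of the group $\hh(-,-)$ and the dictionary between admissible connected colorings and epimorphisms onto groups (Corollary \ref{colorepi} and Theorem \ref{main1}).

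First I would reformulate (i) in group-theoretic terms: $f_*$ is a section precisely when there is a homomorphism $s:\pi_1(Y,f(x_0))\to\pi_1(X,x_0)$ with $s f_*=\mathrm{id}$, i.e. $f_*$ is a split monomorphism. Equivalently, writing $r:\widetilde{B}(f)\to Y$ for the homotopy equivalence and $i:X\hookrightarrow\widetilde{B}(f)$ for the inclusion (so $f_*=r_*i_*$ and $r_*$ is an isomorphism), condition (i) is equivalent to saying that $i_*:\hh(X,x_0)\to\hh(\widetilde{B}(f),x_0)$ is a split monomorphism. This lets me replace $f$ by the inclusion $i$ throughout, at the cost of carrying the identification of edges: an edge-path in $X$ and its image under $f$ differ by a concatenation of monotonic edge-paths, and the convention $c(x,x')=$ weight of a monotonic path makes the statement (ii) stable under this replacement (one checks $\widetilde c(f(x),f(x'))=\widetilde c|_{\widetilde B(f)}$ along the canonical monotonic path from $y\le f(x)$ pieces). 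So it suffices to prove: for an inclusion $i:X\hookrightarrow Z$ of the subdiagram-space $X$ into a connected locally finite $T_0$-space $Z$, $i_*$ is a split mono iff every admissible connected $G$-coloring of $X$ extends (up to the $g_x$-conjugation) to an admissible $G$-coloring of $Z$.

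For (i)$\Rightarrow$(ii): given an admissible connected $G$-coloring $c$ of $X$, it induces an epimorphism $W_c:\hh(X,x_0)\twoheadrightarrow G$ with kernel $N$. If $i_*$ is split by $s:\hh(Z,x_0)\to\hh(X,x_0)$, consider $W_c\circ s:\hh(Z,x_0)\to G$; this is an epimorphism since $W_c s i_* = W_c$ is onto. By Theorem \ref{main1} applied to $Z$ (the epimorphism $W_c s$ corresponds to some admissible connected coloring $\hat c$ of $Z$ with $\ker(W_{\hat c})=\ker(W_c s)$), or more directly by the explicit construction in the proof of Theorem \ref{main1}: choose edge-paths $\gamma_z$ from $x_0$ to each $z\in Z$ with $\gamma_x$ lying in $X$ for $x\in X$, and define $\widetilde c(z,z')=W_c s(\langle\gamma_z(z,z')\gamma_{z'}^{-1}\rangle)$. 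This is admissible by construction. Restricting to an edge $(x,x')\in\ee(X)$, since $\gamma_x,\gamma_{x'}\subseteq X$ and $s i_*=\mathrm{id}$, one computes $\widetilde c(x,x')=W_c(\langle\gamma_x(x,x')\gamma_{x'}^{-1}\rangle)=w_c(\gamma_x)\,c(x,x')\,w_c(\gamma_{x'})^{-1}$, so setting $g_x=w_c(\gamma_x)$ gives exactly the required identity; extending the convention to $c(x,x')$ for monotonic paths is automatic from multiplicativity of weights.

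For (ii)$\Rightarrow$(i): apply (ii) to the \emph{standard} $\hh(X,x_0)$-coloring $c$ of $X$, which is admissible and connected and whose induced weight $W_c:\hh(X,x_0)\to\hh(X,x_0)$ is the identity. By (ii) there is an admissible $\hh(X,x_0)$-coloring $\widetilde c$ of $Z$ and elements $g_x$ with $\widetilde c(x,x')=g_x c(x,x') g_{x'}^{-1}$. By Lemma \ref{extender} (or directly, conjugating $\widetilde c$ by $g_x$ on $X$ and by $1$ off $X$) we may assume $\widetilde c$ genuinely restricts to $c$ on $\ee(X)$. Then $\widetilde c$ induces $W_{\widetilde c}:\hh(Z,x_0)\to\hh(X,x_0)$, and for a closed edge-path $\xi$ at $x_0$ contained in $X$ we have $W_{\widetilde c}(i_*\langle\xi\rangle)=w_{\widetilde c}(\xi)=w_c(\xi)=W_c(\langle\xi\rangle)=\langle\xi\rangle$. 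Hence $W_{\widetilde c}\circ i_*=\mathrm{id}_{\hh(X,x_0)}$, so $i_*$ is a section; transporting back through $r_*$, $f_*$ is a section.

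The main obstacle I expect is purely bookkeeping: making the mapping-cylinder reduction compatible with the convention $c(x,x')=w(\text{monotonic path})$ when $f$ is not injective and not a diagram inclusion, and checking that ``admissible coloring of $Y$'' pulls back correctly along $r$ and along the collapse $\widetilde B(f)\to Y$ — i.e. that an admissible coloring on $\widetilde B(f)$ that is trivial on the ``cylinder'' edges $y<x$ descends to one on $Y$, and conversely. None of this is deep, but it is where the care is needed; the group-theoretic core (split mono $\leftrightarrow$ extendability of colorings) is immediate from Theorem \ref{main1} and the functoriality of $W_c$.
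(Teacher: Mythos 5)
Your overall strategy coincides with the paper's: reduce to the inclusion $i\colon X\hookrightarrow\widetilde{B}(f)$, run direction (i)$\Rightarrow$(ii) by defining $\widetilde{c}(z,z')=W_cs(\langle\gamma_z(z,z')\gamma_{z'}^{-1}\rangle)$ with $\gamma_x$ inside $X$ and invoking Lemma \ref{extender}, and run (ii)$\Rightarrow$(i) on the standard coloring so that a suitable weight map splits $i_*$. The one place where your write-up is not just bookkeeping, and where your description of the remaining work is actually off, is the converse direction: statement (ii) hands you a coloring $\widetilde{c}$ of $Y$, and $\ee(X)$ is not contained in $\ee(Y)$, so the phrase ``we may assume $\widetilde{c}$ genuinely restricts to $c$ on $\ee(X)$'' does not parse until you have built a coloring of $\widetilde{B}(f)$. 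The correct move (the paper's) is to glue: set $c'|_X=c$, $c'|_Y=\widetilde{c}$, and color each cylinder edge $(f(x),x)$ with $g_x$ --- \emph{not} trivially, contrary to your closing remark about colorings ``trivial on the cylinder edges'' descending to $Y$. Verifying that this glued $c'$ is admissible is the only point in the whole theorem where the hypothesis $\widetilde{c}(f(x),f(x'))=g_xc(x,x')g_{x'}^{-1}$ is actually used: one decomposes an arbitrary digon of $\widetilde{B}(f)$ with minimum in $Y$ and maximum in $X$ into a digon in $Y$, a digon in $X$, and two mixed digons through cylinder edges (the computation around Figure \ref{tresromb}), and the hypothesis is exactly what makes the mixed pieces match. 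Once $c'$ is admissible, $W_{c'}i_*=W_c=1_{\hh(X,x_0)}$ and you are done; so the idea is right, but the step you labeled ``not deep'' is the substantive verification and should be carried out rather than deferred.
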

\begin{proof}
Suppose first that the Hasse diagram of $X$ is a subdiagram of the Hasse diagram of $Y$. In this case, if the inclusion $i:X \hookrightarrow Y$ induces a section $i_* :\hh (X,x_0) \to \hh (Y,x_0)$, let $r: \hh (Y,x_0) \to \hh (X,x_0)$ be a homomorphism such that $ri_*=1_{\hh (X,x_0)}$. Let $c$ be an admissible connected $G$-coloring of $X$ and let $W:\hh (X,x_0) \to G$ be the weight map induced by $c$. Choose for each $y\in Y$ an edge path $\gamma _y$ from $x_0$ to $y$ in such a way that $\gamma _x$ is contained in $X$ for every $x\in X$. Define the $G$-coloring $\widetilde{c}$ by $\widetilde{c}(y,y')=Wr (\langle \gamma _y (y,y') \gamma _{y'} ^{-1} \rangle)$ for each $(y,y')\in \ee (Y)$. Then $\widetilde{c}$ is admissible. Moreover, if $(x,x')\in \ee (X)$, $\widetilde{c}(x,x')=Wr(\langle \gamma _x (x,x') \gamma _{x'} ^{-1} \rangle)= W(\langle \gamma _x (x,x') \gamma _{x'} ^{-1} \rangle)=w(\gamma _x)c(x,x') w(\gamma _{x'})^{-1}$. Therefore, $\widetilde{c}|_X$ is equivalent to $c$. Since $\widetilde{c}|_X$ 
extends to an admissible $G$-coloring of $Y$, by Lemma \ref{extender} so does $c$.

Suppose now $f:X\to Y$ is any continuous map between connected locally finite $T_0$-spaces such that $f_*: \pi _1 (X,x_0) \to \pi _1(Y, f(x_0))$ is a section. 
Then the inclusion $i: X \hookrightarrow \widetilde{B}(f)$ induces a section $i_*: \hh (X, x_0) \to \hh (\widetilde{B}(f), x_0)$. 
Given an admissible connected $G$-coloring $c$ of $X$, by the previous paragraph, this extends to an admissible $G$-coloring $\widetilde{c}$ of $\widetilde{B}(f)$. The restriction of this coloring to $Y$ is an admissible coloring. Let $g_x=\widetilde{c}(f(x),x)$ for every $x\in X$. The admissibility of $\widetilde{c}$ for a digon containing $f(x), f(x'), x$ and $x'$ determines the identity $$\widetilde{c}(f(x),f(x'))=g_xc(x,x')g_{x'}^{-1}.$$

Conversely, let $c$ be the standard coloring of $X$. By hypothesis there exist an admissible $\hh (X,x_0)$-coloring $\widetilde{c}$ of $Y$ and a family $\{g_x\}_{x\in X}$ satisfying the identity above.
This gives an $\hh (X,x_0)$-coloring $c'$ of $\widetilde{B}(f)$ which coincides with $c$ in $X$, with $\widetilde{c}$ in $Y$ and such that $c'(f(x),x)=g_x$ if $f(x)\prec x$. Note then that $c'(f(x),x)=g_x$ for every $x\in X$.
The coloring $c'$ is admissible since for a digon with minimum $y\in Y$, maximum $x\in X$ and containing the edges $(f(x'),x')$ and $(f(x''),x'')$ (see Figure \ref{tresromb} below) one has

$$c'(y,f(x'))c'(f(x'),x')c'(x',x)=\widetilde{c}(y,f(x'))g_{x'}c(x',x)=\widetilde{c}(y,f(x'))\widetilde{c}(f(x'), f(x))g_x=$$ $$=\widetilde{c}(y,f(x''))\widetilde{c}(f(x''), f(x))g_x=\widetilde{c}(y,f(x''))g_{x''}c(x'', x)=c'(y,f(x''))c'(f(x''), x'')c'(x'',x).$$

\begin{figure}[h] 
\begin{center}
\includegraphics[scale=0.4]{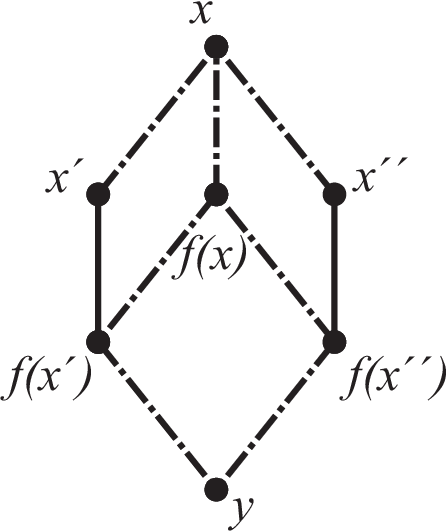}
\caption{A digon decomposed in three digons. The lines in the diagram represent monotonic paths.}\label{tresromb}
\end{center}
\end{figure}

Since the coloring $c$ extends to an admissible coloring $c'$ of $\widetilde{B}(f)$, the weight map $W_{c'}: \hh(\widetilde{B}(f),x_0) \to \hh (X,x_0)$ satisfies $W_{c'}i_*=W_c=1_{\hh (X,x_0)}$, which proves that $i_*$ is a section. Then $f_*$ is also a section.
\end{proof}

\begin{obs}
Note that if $A$ is a connected subdiagram of the Hasse diagram of a connected locally finite poset $X$, then the inclusion $A\hookrightarrow X$ induces a section in the fundamental groups if and only if for every group $G$, each admissible connected $G$-coloring of $A$ extends to an admissible $G$-coloring of $X$. This follows from Theorem \ref{teoseccion} and its proof.
\end{obs}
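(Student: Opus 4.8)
The plan is to deduce this Remark directly from Theorem~\ref{teoseccion} applied to the inclusion $i\colon A\hookrightarrow X$, supplemented by Lemma~\ref{extender}; essentially all the work has already been done in those two results, and what remains is bookkeeping. First I would check that the hypotheses of Theorem~\ref{teoseccion} are met by $i$: the subdiagram $D$ inherits its edge-orientations from the Hasse diagram of $X$, hence has no directed cycle, so the preorder it generates on the vertices of $D$ is antisymmetric and $A$ is a connected, locally finite $T_0$-space; moreover $i$ is continuous and $x_0\in A$. I would also record the elementary observation that $\ee(A)=D\subseteq\ee(X)$, so for any $G$-coloring $\widetilde c$ of $X$ the restriction $\widetilde c|_A$ makes sense, and it is admissible whenever $\widetilde c$ is, since every monotonic edge-path of $A$ is a monotonic edge-path of $X$.

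For the implication ``$\Leftarrow$'' I would argue as follows. Assume every admissible connected $G$-coloring of $A$ extends to an admissible $G$-coloring of $X$, and let $c$ be an admissible connected $G$-coloring of $A$, with extension $\widetilde c$ on $X$. Taking $g_a=1\in G$ for every $a\in A$ gives
$$\widetilde c(i(a),i(a'))=\widetilde c(a,a')=c(a,a')=g_a\,c(a,a')\,g_{a'}^{-1}\qquad\text{for all }(a,a')\in\ee(A),$$
which is precisely condition (ii) of Theorem~\ref{teoseccion} for $f=i$. Hence $i_*\colon\pi_1(A,x_0)\to\pi_1(X,x_0)$ is a section.

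For the implication ``$\Rightarrow$'' I would invoke the other direction of Theorem~\ref{teoseccion}: if $i_*$ is a section, then for every admissible connected $G$-coloring $c$ of $A$ there are an admissible $G$-coloring $\widetilde c$ of $X$ and elements $g_a\in G$ ($a\in A$) with $\widetilde c(a,a')=g_a\,c(a,a')\,g_{a'}^{-1}$ for every $(a,a')\in\ee(A)$. This says exactly that $\widetilde c|_A$ is an admissible $G$-coloring of $A$, equivalent to $c$ (via the automorphism $\mathrm{id}_G$ and the family $\{g_a\}$), which moreover extends to the admissible $G$-coloring $\widetilde c$ of $X$. Lemma~\ref{extender} then upgrades this to an admissible $G$-coloring of $X$ extending $c$ itself, which is what is required.

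The main obstacle: there is essentially none of substance, since this is a repackaging of Theorem~\ref{teoseccion} and Lemma~\ref{extender}. The only point requiring care is the mismatch between ``extends'' (restriction literally equal to $c$) and what condition (ii) of Theorem~\ref{teoseccion} literally delivers (restriction merely \emph{equivalent} to $c$); closing this gap is precisely the role of Lemma~\ref{extender}, which is in fact the same tool used for this same passage inside the proof of Theorem~\ref{teoseccion}. A minor secondary point, worth a sentence, is verifying that the space $A$ attached to the subdiagram $D$ is genuinely $T_0$ so that Theorem~\ref{teoseccion} is applicable; this is immediate from $D$ having no directed cycles.
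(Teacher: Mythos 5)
Your argument is correct and follows essentially the same route the paper intends: the direction ``extends $\Rightarrow$ section'' is condition (ii) of Theorem \ref{teoseccion} with all $g_a=1$, and the direction ``section $\Rightarrow$ extends'' is condition (ii) together with Lemma \ref{extender} to replace the coloring $\widetilde c|_A$, which is merely equivalent to $c$, by an extension of $c$ itself --- exactly the step carried out in the first paragraph of the proof of Theorem \ref{teoseccion}. Your side remarks (that $D$ is the Hasse diagram of a locally finite $T_0$-space and that restrictions of admissible colorings are admissible) are correct and consistent with what the paper already records before Lemma \ref{arbol}.
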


\begin{teo} \label{teoepi}
Let $f:X\to Y$ be a continuous map between connected locally finite $T_0$-spaces and let $x_0\in X$. Then the following are equivalent

\noindent (i) The homomorphism $f_*:\pi _1 (X,x_0) \to \pi _1(Y, f(x_0))$ is an epimorphism.

\noindent (ii) For every group $G$ and every admissible connected $G$-coloring $c$ of $Y$, the $G$-coloring of $X$ given by $$\widetilde{c}(x,x')=c(f(x),f(x'))$$ is connected.
\end{teo}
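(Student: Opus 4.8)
The plan is to recognize $\widetilde{c}$ as the pullback of $c$ along $f$ and to show that the weight homomorphism it induces factors as $W_{\widetilde{c}}=W_c\circ f_*$; granting this identity, both implications drop out immediately from the already-recorded equivalence ``$c$ admissible and connected $\iff$ $c$ admissible and $W_c$ an epimorphism''.

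First I would verify that $\widetilde{c}$ is admissible. Since $f$ is order-preserving, $x\prec x'$ forces $f(x)\le f(x')$, so $\widetilde{c}(x,x')=c(f(x),f(x'))$ makes sense for every $(x,x')\in\ee(X)$, using the convention fixed just before the statement that $c(a,b)$ denotes the common weight of the monotonic edge-paths from $a$ to $b$ when $a\le b$ (with $c(a,a)=1$). For a monotonic edge-path $\xi\colon x=x_1\prec\dots\prec x_r=x'$ in $X$ one then computes $w_{\widetilde{c}}(\xi)=\prod_i c(f(x_i),f(x_{i+1}))=c(f(x),f(x'))$, the last step because concatenating monotonic edge-paths from $f(x_i)$ to $f(x_{i+1})$ produces a monotonic edge-path from $f(x)$ to $f(x')$ and $c$ is admissible. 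Hence $w_{\widetilde{c}}(\xi)$ depends only on the endpoints, $\widetilde{c}$ is admissible, and it induces $W_{\widetilde{c}}\colon\hh(X,x_0)\to G$. The same bookkeeping, applied to a closed edge-path $\xi=(x_0,x_1)(x_1,x_2)\dots(x_{n-1},x_0)$ at $x_0$ whose image $f_*(\xi)$ is by definition a concatenation of monotonic edge-paths $\mu_i$ from $f(x_i)$ to $f(x_{i+1})$, gives $w_c(\mu_i)=c(f(x_i),f(x_{i+1}))=\widetilde{c}(x_i,x_{i+1})$ in both directions (using $c(a,b)=c(b,a)^{-1}$), so that $W_{\widetilde{c}}(\langle\xi\rangle)=w_{\widetilde{c}}(\xi)=\prod_i w_c(\mu_i)=w_c(f_*(\xi))=W_c(f_*(\langle\xi\rangle))$, i.e. $W_{\widetilde{c}}=W_c\circ f_*$ on $\hh(X,x_0)$.

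For (i)$\Rightarrow$(ii), given an admissible connected $G$-coloring $c$ of $Y$, the map $W_c\colon\hh(Y,f(x_0))\to G$ is onto; combined with surjectivity of $f_*$ on $\hh$ (equivalent to surjectivity on $\pi_1$ by the naturality of $\eta$ in Remark \ref{naturalidad}) we get that $W_{\widetilde{c}}=W_c\circ f_*$ is onto, and since $\widetilde{c}$ is admissible this says precisely that $\widetilde{c}$ is connected. For (ii)$\Rightarrow$(i), I would feed the hypothesis the standard $\hh(Y,f(x_0))$-coloring $c$ of $Y$: it is admissible, connected, and has $W_c=\mathrm{id}_{\hh(Y,f(x_0))}$, so its pullback $\widetilde{c}$ is admissible (Step 1) and connected (hypothesis), whence $W_{\widetilde{c}}\colon\hh(X,x_0)\to\hh(Y,f(x_0))$ is onto; but $W_{\widetilde{c}}=W_c\circ f_*=f_*$, so $f_*$ is onto on $\hh$, hence on $\pi_1$ by Remark \ref{naturalidad}.

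The only delicate point is Step 1: $f$ need not carry a covering relation to a covering relation — it may collapse an edge ($f(x)=f(x')$) or send it to a strictly longer monotonic chain — so the identification $\widetilde{c}(x,x')=c(f(x),f(x'))$ and the telescoping computations above genuinely rely on admissibility of $c$ (which is exactly why the ``weight of a monotonic path'' convention is introduced immediately before the statement). Everything else is the formal machinery of weight maps already set up in Theorems \ref{main1} and \ref{main3}.
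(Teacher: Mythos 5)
Your proof is correct and follows essentially the same route as the paper: the identity $W_{\widetilde{c}}=W_c\circ f_*$ is exactly the commutative triangle the paper draws, and both arguments conclude the converse direction by feeding in the standard coloring of $Y$, for which $W_c$ is the identity. You merely spell out the admissibility of the pullback coloring and the telescoping weight computation, which the paper dismisses as clear.
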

\begin{proof}
Given an admissible $G$-coloring $c$ of $Y$, the coloring of $X$ defined by the identity of (ii) is clearly admissible. Moreover, there is a commutative triangle

\begin{displaymath}
\xymatrix@C=23pt{ \hh (X,x_0) \ar@{->}^{W_{\widetilde{c}}}[dr] \ar@{->}^{f_*}[rr] & & \hh (Y, f(x_0)) \ar@{->}^{W_c}[dl] \\
								& G. \\ }
\end{displaymath}
%que lo chequee el lector
If $f_*$ is an epimorphism and $c$ is connected, then $W_c$ is an epimorphism and therefore so is $W_{\widetilde{c}}$, which shows that $\widetilde{c}$ is connected. Conversely, if (ii) holds, then for the standard coloring $c$ of $Y$ we have that $\widetilde{c}$ is connected and then $f_*=W_{\widetilde{c}}$ is an epimorphism.
\end{proof}

\begin{obs} \label{obsepi}
By the last result, if $A$ is a connected subdiagram of the Hasse diagram of a connected locally finite poset $X$, then the inclusion $A\hookrightarrow X$ induces an epimorphism in the fundamental groups if and only if for every group $G$, each admissible connected $G$-coloring of $X$ restricts to a connected $G$-coloring of $A$.
\end{obs}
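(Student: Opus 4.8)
The plan is to deduce this directly from Theorem \ref{teoepi} applied to the inclusion $i:A\hookrightarrow X$, viewed as a continuous map with source $A$ and target $X$. First I would check that $A$ meets the hypotheses of that theorem, i.e. that it is a connected locally finite $T_0$-space: connectedness is assumed, local finiteness is inherited from $X$, and antisymmetry of the order of $A$ holds because every edge of the Hasse diagram of $A$ is an edge of $X$, so a relation $a<b<a$ in $A$ would already occur in $X$. Fix any $x_0\in A$; since $A$ is connected, both the surjectivity of $i_*$ and the connectedness of a $G$-coloring of $A$ are independent of this choice.

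Next I would identify, for $f=i$, the coloring $\widetilde{c}$ appearing in condition (ii) of Theorem \ref{teoepi}. Because every edge $(a,a')\in\ee(A)$ is an edge of $X$, we have $\widetilde{c}(a,a')=c(i(a),i(a'))=c(a,a')$, so $\widetilde{c}$ is simply the restriction $c|_A$; moreover $c|_A$ is automatically admissible whenever $c$ is, since two monotonic edge-paths in $A$ with the same endpoints are in particular monotonic edge-paths in $X$ and hence carry equal $c$-weight. Consequently, condition (ii) of Theorem \ref{teoepi} becomes exactly: for every group $G$ and every admissible connected $G$-coloring $c$ of $X$, the restriction $c|_A$ is connected.

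Plugging these two observations into the equivalence of conditions (i) and (ii) of Theorem \ref{teoepi} — where condition (i) now asserts that $i_*:\pi_1(A,x_0)\to\pi_1(X,x_0)$ is an epimorphism — yields precisely the statement of the remark. I do not foresee any genuine difficulty: the only steps needing a line of justification are that $A$ is a connected locally finite $T_0$-space and that the induced coloring of (ii) is literally the restriction $c|_A$, after which the remark is a verbatim specialization of Theorem \ref{teoepi}.
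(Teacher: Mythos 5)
Your proposal is correct and is exactly the paper's (implicit) argument: the remark is stated as a direct specialization of Theorem \ref{teoepi} to the continuous inclusion $i:A\hookrightarrow X$, and your two supporting observations — that $A$ is a connected locally finite $T_0$-space and that the induced coloring $\widetilde{c}(a,a')=c(i(a),i(a'))$ is literally the restriction $c|_A$ because every edge of $\ee(A)$ is an edge of $\ee(X)$ — are precisely the points one must check. Nothing is missing.
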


\begin{teo} \label{teotrivial}
Let $f:X\to Y$ be a continuous map between connected locally finite $T_0$-spaces and let $x_0\in X$. Then the following are equivalent

\noindent (i) The homomorphism $f_*:\pi _1 (X,x_0) \to \pi _1(Y, f(x_0))$ is the trivial map $f_*=0$.

\noindent (ii) For every group $G$ and every admissible $G$-coloring $c$ of $Y$, there exist $g_x\in G$ for each $x\in X$ and a $G$-coloring $\widetilde{c}$ of $Y$, equivalent to $c$ such that $$\widetilde{c}(f(x),f(x'))=g_xg_{x'}^{-1}$$ for every $(x,x')\in \ee (X)$.
\end{teo}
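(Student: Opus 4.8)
The plan is to mimic the proof of Theorem~\ref{teoepi}, transporting colorings through $f$, and to work with the induced homomorphism $f_*:\hh(X,x_0)\to\hh(Y,f(x_0))$ throughout (it suffices, by the natural isomorphism of Remark~\ref{naturalidad}).

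For (i)$\Rightarrow$(ii), given a group $G$ and an admissible $G$-coloring $c$ of $Y$, I would pull it back to the $G$-coloring $\widetilde{c}_0$ of $X$ defined on edges by $\widetilde{c}_0(x,x')=c(f(x),f(x'))$. Since $c$ is admissible and $f$ is order-preserving, the weight of any monotonic $X$-edge-path from $x$ to $x'$ is $c(f(x),f(x'))$, so $\widetilde{c}_0$ is admissible, and exactly as in the proof of Theorem~\ref{teoepi} its induced weight map satisfies $W_{\widetilde{c}_0}=W_c f_*:\hh(X,x_0)\to G$. If $f_*$ is trivial, then $W_{\widetilde{c}_0}$ is trivial, and the key point is that an admissible coloring of a connected locally finite poset with trivial weight map is a ``coboundary'': choosing for each $x\in X$ an edge-path $\gamma_x$ in $X$ from $x_0$ to $x$ (with $\gamma_{x_0}$ trivial) and setting $g_x=w_{\widetilde{c}_0}(\gamma_x)^{-1}\in G$, the identity $W_{\widetilde{c}_0}(\langle\gamma_x(x,x')\gamma_{x'}^{-1}\rangle)=w_{\widetilde{c}_0}(\gamma_x)\,\widetilde{c}_0(x,x')\,w_{\widetilde{c}_0}(\gamma_{x'})^{-1}=1$ yields $c(f(x),f(x'))=\widetilde{c}_0(x,x')=g_xg_{x'}^{-1}$ for every $(x,x')\in\ee(X)$. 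Thus (ii) holds, even with $\widetilde{c}=c$ itself.

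For (ii)$\Rightarrow$(i), I would apply (ii) to the single instance $G=\hh(Y,f(x_0))$ with $c$ the standard coloring of $Y$ based at $f(x_0)$, for which $W_c=1_{\hh(Y,f(x_0))}$. This produces elements $g_x\in G$ and an admissible $G$-coloring $\widetilde{c}$ of $Y$, equivalent to $c$, with $\widetilde{c}(f(x),f(x'))=g_xg_{x'}^{-1}$ on every edge of $X$. Since $\widetilde{c}\sim c$ and $W_c$ is the identity, $W_{\widetilde{c}}$ is obtained from $W_c$ by composing with a conjugation and an automorphism of $G$, hence is itself an automorphism of $\hh(Y,f(x_0))$, in particular injective. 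Now, for a closed edge-path $\xi=(x_0,x_1)(x_1,x_2)\ldots(x_{n-1},x_n)$ at $x_0$ in $X$ (so $x_n=x_0$), take a closed edge-path $\xi'$ at $f(x_0)$ with $f_*(\langle\xi\rangle)=\langle\xi'\rangle$ obtained by replacing each $(x_i,x_{i+1})$ by a monotonic $Y$-edge-path between $f(x_i)$ and $f(x_{i+1})$; the $\widetilde{c}$-weight of the $i$-th segment equals $g_{x_i}g_{x_{i+1}}^{-1}$ regardless of the orientation of $(x_i,x_{i+1})$, so $W_{\widetilde{c}}(f_*(\langle\xi\rangle))=w_{\widetilde{c}}(\xi')=\prod_{i=0}^{n-1}g_{x_i}g_{x_{i+1}}^{-1}=g_{x_0}g_{x_n}^{-1}=1$. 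Injectivity of $W_{\widetilde{c}}$ then forces $f_*=0$.

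The main obstacle is purely bookkeeping: since $f$ sends an edge of $X$ to a monotonic path of $Y$ that may go \emph{up or down}, one must check that the weight contribution of each segment telescopes correctly in both cases (in the ``down'' case it is $\widetilde{c}(f(x_{i+1}),f(x_i))^{-1}=(g_{x_{i+1}}g_{x_i}^{-1})^{-1}=g_{x_i}g_{x_{i+1}}^{-1}$), and that the pulled-back coloring $\widetilde{c}_0$ is genuinely admissible with weight map $W_cf_*$. An equivalent alternative for (i)$\Rightarrow$(ii) would be to pass to the non-Hausdorff mapping cylinder $\widetilde{B}(f)$ and apply Lemma~\ref{arbol} to the inclusion $X\hookrightarrow\widetilde{B}(f)$ (which induces the trivial map since $r:\widetilde{B}(f)\to Y$ is a homotopy equivalence with $r|_X=f$), but this detour is not needed.
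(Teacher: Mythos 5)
Your proof is correct, but it takes a genuinely different route from the paper's, which runs both implications through the non-Hausdorff mapping cylinder $\widetilde{B}(f)$: for (i)$\Rightarrow$(ii) the paper pulls $c$ back to $\widetilde{B}(f)$ along the retraction $r$, applies Lemma \ref{arbol} to the inclusion $X\hookrightarrow\widetilde{B}(f)$ to make the coloring trivial on $X$, and reads off $g_x$ as the color $\widetilde{c}(f(x),x)$ of the vertical edge, using admissibility on the digon through $f(x),f(x'),x,x'$; for (ii)$\Rightarrow$(i) it extends a coloring of $Y$ to the cylinder via Lemma \ref{extender} and shows the standard coloring of $\widetilde{B}(f)$ is equivalent to one trivial on $X$, forcing $i_*=0$. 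You instead stay on $X$ and $Y$: you pull $c$ back along $f$ to the admissible coloring $\widetilde{c}_0=f^*c$ with $W_{\widetilde{c}_0}=W_cf_*$ (exactly as in Theorem \ref{teoepi}), observe that an admissible coloring with trivial weight map is a coboundary $g_xg_{x'}^{-1}$, and for the converse run a telescoping computation against the standard coloring of $Y$, whose weight map is an automorphism after passing to an equivalent coloring. Both arguments are sound; yours is shorter, avoids the cylinder and Lemmas \ref{arbol}/\ref{extender} entirely, and in the forward direction even proves the slightly stronger statement that one may take $\widetilde{c}=c$ itself (so the ``equivalent to $c$'' clause in (ii) is only needed for the converse). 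What the paper's version buys is uniformity: the same cylinder mechanism handles Theorems \ref{teoseccion}, \ref{teoepi} and \ref{teotrivial}, and it makes the geometric meaning of the $g_x$ (colors of the edges $(f(x),x)$) visible. You correctly flagged and resolved the one real bookkeeping point, namely that a descending segment contributes $\widetilde{c}(f(x_{i+1}),f(x_i))^{-1}=g_{x_i}g_{x_{i+1}}^{-1}$ so the product still telescopes; the degenerate case $f(x_i)=f(x_{i+1})$ also causes no trouble since then the hypothesis forces $g_{x_i}=g_{x_{i+1}}$.
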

\begin{proof}
%Lemma \ref{arbol}
Suppose $f_*=0$ and let $c$ be an admissible $G$-coloring of $Y$. We define a $G$-coloring of $\widetilde{B}(f)$ by $c'(z,z')=c(r(z),r(z'))$ where $r:\widetilde{B}(f)\to Y$ is the retraction of the non-Hausdorff mapping cylinder onto $Y$.
Clearly, $c'$ is admissible. Since $i_*:\hh (X, x_0)\to \hh (\widetilde{B}(f),x_0)$ is trivial, by Lemma \ref{arbol} there exists a $G$-coloring $\widetilde{c}$ of $\widetilde{B}(f)$ equivalent to $c'$ which is trivial in $X$.

Since $\widetilde{c}$ is admissible, for a digon containing an edge $(x,x')\in \ee (X)$, $f(x)$ and $f(x')$, we have $\widetilde{c}(f(x),x)\widetilde{c}(x,x')=\widetilde{c}(f(x),f(x'))\widetilde{c}(f(x'),x')$. Let $g_x=\widetilde{c}(f(x),x)$.
Since $\widetilde{c}$ is trivial in $X$, we have $g_xg_{x'}^{-1}=\widetilde{c}(f(x),f(x'))$. The restriction of $\widetilde{c}$ to $Y$ is equivalent to $c'|_Y=c$ and satisfies the required identity.

Conversely, assume now that condition (ii) holds. Let $c$ be the standard coloring of $\widetilde{B}(f)$. 
Then $c|_Y$ is equivalent to some coloring $\widetilde{c}$ such that $\widetilde{c}(f(x),f(x'))=g_xg_{x'}^{-1}$ for every $(x,x')\in \ee (X)$ and some family $\{g_x\}_{x\in X}$. 
By Lemma \ref{extender}, $\widetilde{c}$ extends to an $\hh (\widetilde{B}(f), x_0)$-coloring $\widetilde{C}$ of $\widetilde{B}(f)$ equivalent to $c$. 
Define for each $x\in X$, $h_x=\widetilde{C}(f(x),x)$. Since $\widetilde{C}$ is admissible, for every $(x,x')\in \ee (X)$ we have $$h_x\widetilde{C}(x,x')=\widetilde{C}(f(x),f(x'))h_{x'}=g_xg_{x'}^{-1}h_{x'}.$$
Thus, $\widetilde{C}(x,x')=h_x^{-1}g_xg_{x'}^{-1}h_{x'}$. Choosing for every $x\in X$, $k_x=g_x^{-1}h_x$, we obtain a coloring $c'$ of $\widetilde{B}(f)$, equivalent to $\widetilde{C}$, 
and such that $c'|_X$ is trivial. Therefore we obtain a coloring of $\widetilde{B}(f)$ equivalent to the standard coloring $c$ which is trivial in $X$. 
Then $1_{\hh (\widetilde{B}(f),x_0)}=W_{c}=\varphi W_{c'}$ for some $\varphi \in \textrm{Aut}(\hh (\widetilde{B}(f),x_0))$. Hence, if $\xi$ is a closed edge-path in $X$, 
the class $i_*(\langle \xi \rangle)$ of $\xi$ in $\hh (\widetilde{B}(f),x_0)$ is $\langle \xi \rangle=W_c(\langle \xi \rangle)= \varphi W_{c'} (\langle \xi \rangle)=1$. This says that $i_*=0$ and then $f_*=0$. 
\end{proof}

\begin{obs}
When $A$ is a connected subdiagram of a connected locally finite poset $X$, the inclusion $i:A\hookrightarrow X$ induces the trivial homomorphism between the fundamental groups if and only if for every group $G$, 
each admissible $G$-coloring of $X$ is equivalent to a coloring which is trivial in $A$. This follows directly from Lemma \ref{arbol} and the last theorem.
\end{obs}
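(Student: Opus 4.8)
The plan is to derive the two implications separately, each directly from one of the two results just proved. Before anything, fix a basepoint $x_0\in A$ and record that the space $A$ determined by the connected subdiagram $D$ is again a connected locally finite $T_0$-space: connectedness of $D$ as a graph makes $A$ path-connected, local finiteness was noted above, and $A$ is $T_0$ since a directed cycle in $D$ would force $x<x$ in $X$. Consequently Lemma~\ref{arbol} applies to the subdiagram $D$ and Theorem~\ref{teotrivial} applies with the inclusion $i\colon A\hookrightarrow X$ in the role of the map $f$ (so that the ``$Y$'' of that theorem is our $X$).

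For the implication ``$i_*=0$ $\Rightarrow$ every admissible coloring of $X$ is equivalent to one trivial on $A$'', I would simply quote Lemma~\ref{arbol}. Indeed $i_*\colon\pi_1(A,x_0)\to\pi_1(X,x_0)$ is trivial iff $i_*\colon\hh(A,x_0)\to\hh(X,x_0)$ is (by the natural isomorphisms $\eta$ and Remark~\ref{naturalidad}), and under that hypothesis Lemma~\ref{arbol} produces, for each admissible $G$-coloring $c$ of $X$, an equivalent coloring $c'$ with $c'(x,y)=1$ for every $(x,y)\in\ee(A)$ --- which is exactly what is claimed.

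For the converse I would feed the hypothesis into Theorem~\ref{teotrivial}. Fix a group $G$ and an admissible $G$-coloring $c$ of $X$; by hypothesis there is an equivalent coloring $\widetilde c$ of $X$ that is trivial on $A$. Taking $g_a=1\in G$ for every $a\in A$ gives $\widetilde c(i(a),i(a'))=1=g_ag_{a'}^{-1}$ for every $(a,a')\in\ee(A)$, so condition (ii) of Theorem~\ref{teotrivial} holds for $f=i$, and therefore $i_*=f_*=0$.

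I do not expect a genuine obstacle here: the argument is essentially a bookkeeping exercise. The only points needing a moment's attention are the standard passage between $\pi_1$ and $\hh$, the check that $A$ meets the hypotheses of both cited statements, and the observation that ``trivial on $A$'' is exactly the degenerate case $g_a\equiv 1$ of the family $\{g_x\}$ occurring in Theorem~\ref{teotrivial}(ii). Alternatively, the forward implication could also be extracted from Theorem~\ref{teotrivial} by composing the coloring it yields with the equivalence $a\mapsto g_a^{-1}$ on $A$ and $x\mapsto 1$ off $A$; but invoking Lemma~\ref{arbol} directly is shorter.
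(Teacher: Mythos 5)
Your proposal is correct and matches the paper's intended argument exactly: the paper's justification is precisely that the forward implication is Lemma~\ref{arbol} (transported from $\pi_1$ to $\hh$ via Remark~\ref{naturalidad}) and the converse is condition (ii) of Theorem~\ref{teotrivial} applied to $f=i$ with $g_a\equiv 1$. Your preliminary checks that $A$ is a connected locally finite $T_0$-space, so that both cited results apply, are the right (and only) points needing verification.
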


Recall that a digon is called simple if it consists of two monotonic edge-paths from a point $x$ to a point $y$ which have no common vertex other than $x$ and $y$. 

\begin{prop}
Let $Y$ be a connected locally finite $T_0$-space, $y_0\in Y$ and let $(a,b)\in \ee (Y)$. Let $X$ be the space corresponding to the subdiagram of $Y$ obtained when removing the edge $(a,b)$.
If $(a,b)$ is contained in a simple digon, the map $i_*: \pi _1(X,y_0) \to \pi _1 (Y,y_0) $ induced by the inclusion is an epimorphism. 
\end{prop}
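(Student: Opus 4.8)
The plan is to invoke the coloring characterization of Remark~\ref{obsepi}. Since $X$ is obtained from the Hasse diagram of $Y$ by deleting the single edge $(a,b)$, no vertices are lost, so $y_0\in X$; and because $(a,b)$ lies in a simple digon, the two halves of that digon furnish an edge-path from $a$ to $b$ avoiding the edge $(a,b)$, so the underlying graph of $X$ stays connected. Thus $X$ is a connected subdiagram of the Hasse diagram of the connected locally finite poset $Y$, and by Remark~\ref{obsepi} it suffices to prove: for every group $G$, every admissible connected $G$-coloring $c$ of $Y$ restricts to a connected $G$-coloring of $X$ (admissibility of $c|_X$ being automatic).

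The key construction is a ``detour'' around the digon. Let $D=\alpha\cup\beta$ be a simple digon containing $(a,b)$, where $\alpha$ and $\beta$ are monotonic edge-paths from a point $s$ to a point $t$ having no common vertex besides $s$ and $t$, and where $(a,b)$ occurs in $\alpha$; write $\alpha=\alpha_1\,(a,b)\,\alpha_2$ with $\alpha_1$ monotonic from $s$ to $a$ and $\alpha_2$ monotonic from $b$ to $t$, and set $\zeta=\alpha_1^{-1}\,\beta\,\alpha_2^{-1}$, an edge-path from $a$ to $b$. By simplicity of $D$, none of the edges occurring in $\alpha_1$, $\alpha_2$ or $\beta$ is the edge $(a,b)$, so $\zeta$ lies entirely in $X$. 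Applying admissibility of $c$ to the digon $D$ gives $w_c(\alpha_1)\,c(a,b)\,w_c(\alpha_2)=w_c(\alpha)=w_c(\beta)$, whence
\[
w_c(\zeta)=w_c(\alpha_1)^{-1}\,w_c(\beta)\,w_c(\alpha_2)^{-1}=c(a,b).
\]

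To finish, I would fix $g\in G$; since $c$ is connected on $Y$ there is a closed edge-path $\xi$ at $y_0$ in $Y$ with $w_c(\xi)=g$. Let $\xi'$ be obtained from $\xi$ by replacing each occurrence of the directed edge $(a,b)$ by $\zeta$ and each occurrence of $(b,a)$ by $\zeta^{-1}$. Then $\xi'$ is again a closed edge-path at $y_0$, it uses only edges of $X$, and since $w_c(\zeta)=c(a,b)=w_c\big((a,b)\big)$ and $w_c(\zeta^{-1})=w_c\big((b,a)\big)$ the weight is unchanged, $w_c(\xi')=g$. Hence $c|_X$ is connected, and Remark~\ref{obsepi} yields that $i_*$ is an epimorphism. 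The only real work is the bookkeeping in the middle paragraph: checking that $\zeta$ avoids the deleted edge (which is exactly where simplicity of the digon is used) and the one-line weight computation; everything else is formal. Alternatively one could bypass colorings and argue directly on edge-paths, using the Note preceding Remark~\ref{naturalidad} to see that $(a,b)$ and $\zeta$ represent the same class of edge-paths from $a$ to $b$ in $Y$, so that every closed edge-path of $Y$ at $y_0$ is equivalent to one lying in $X$; but the coloring argument is the one that fits the framework of this section.
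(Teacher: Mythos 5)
Your proof is correct and follows essentially the same route as the paper: invoke Remark \ref{obsepi}, then show any admissible connected $G$-coloring of $Y$ restricts to a connected coloring of $X$ by detouring around the deleted edge through the rest of the simple digon, with admissibility guaranteeing the detour has the same weight. The paper states this in two lines; you have merely supplied the bookkeeping it leaves implicit.
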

\begin{proof}
By Remark \ref{obsepi} it suffices to check that any admissible connected $G$-coloring $c$ of $Y$ restricts to a connected coloring of $X$.
Note that $X$ is connected. If $\xi$ is a closed edge-path at $y_0$ in $Y$, then there is another closed edge-path with the same weight as $\xi$ and which does not contain the edge $(a,b)$.
We can just avoid edge $(a,b)$ using the remaining edges of the simple digon. Thus $c|_X$ is also connected. 
\end{proof}

If $X$ is simply-connected and we add an edge which appears as part of a simple digon, the new space is also simply-connected. However, if the edge is not contained in a simple digon, the fundamental group of the new space is $\mathbb{Z}$.

\section{Boards on surfaces}

In the previous sections we used colorings to study problems of topological nature. In this section we exhibit an application in a different direction. Consider the following elementary combinatorial problem. Let $n$ and $m$ be positive integers and suppose we have an $n\times m$ rectangular board. The edges of the squares in the board are colored either with blue or with red, and one such coloring is called \textit{valid} if for each square of the board exactly $0$, $2$ or $4$ of its edges are colored with blue. A possible \textit{move} is to pick a vertex of the board and change the colors of all the (two, three or four) edges incident to that vertex, blue by red and red by blue. Prove that if $c$ and $c'$ are two valid colorings of the board, then it is possible to obtain $c'$ from $c$ by performing a finite sequence of moves.

\begin{figure}[h]
\begin{minipage}{6cm}
\vspace{.3cm}
\center{\includegraphics[scale=0.5]{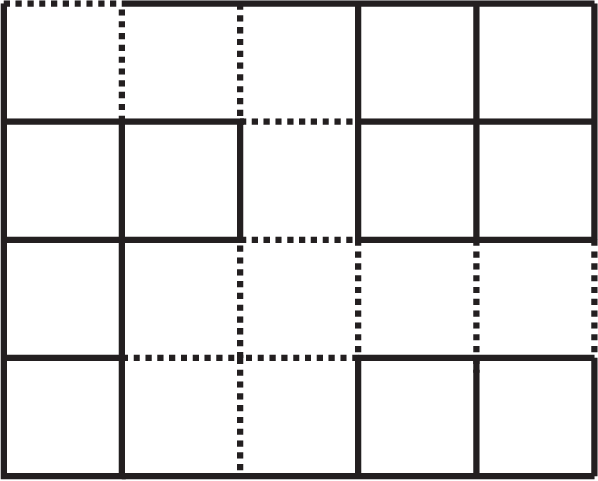}}
\end{minipage}
\begin{minipage}{6cm}
\center{\includegraphics[scale=0.5]{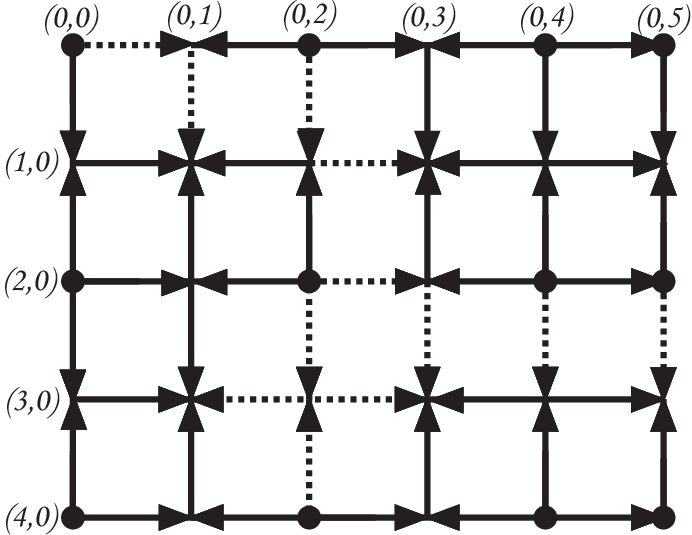}}
\end{minipage}
\caption{On the left, a $4\times 5$ board and a valid coloring. The solid edges represent color blue and the dotted edges red. On the right, the Hasse diagram of the poset $I_4\times I_5$ in which the orientation of the edges is indicated with an arrow. The corresponding admissible coloring is represented using solid edges for the identity of $\mathbb{Z}_2$ and dotted edges for the non-trivial element. \protect\label{figtablero}}
\end{figure}

We give a solution using the methods described in this paper. Let $I_n$ be the poset $0<1>2<3>\ldots n$. Then $I_n$ and $I_m$ are contractible and therefore, so is the product $I_n \times I_m$. In particular any two admissible $\mathbb{Z}_2$-colorings of $I_n\times I_m$ are equivalent. The Hasse diagram of $I_n\times I_m$ is an $n\times m$ board where the edges of the diagram coincide with edges of the squares (see Figure \ref{figtablero}). A $\mathbb{Z}_2$-coloring is a coloring of the edges with colors $0=$blue and $1=$red. The admissibility of the coloring is equivalent to the validity. Finally, the equivalence of $\mathbb{Z}_2$-colorings is the same as the existence of moves taking one coloring to the other.

Suppose now that we have a cylindrical board, obtained from the $n\times m$ board by identifying the top edge of each square in the first row with the bottom edge of the square in the last row and the same column. Note that the notions of valid colorings and moves still make sense. In this case there exist two valid colorings such that none of them can be obtained from the other by performing allowed moves. However, given any three valid colorings, there are two of them which are related by a sequence of moves.

To see this consider, when $n\ge 4$ is even, the poset $C_n$ which is obtained from $I_{n}$ by identifying $0$ and $n$. It is the poset $0<1>2<\ldots <n-1>0$ (see Figure \ref{figc89}).

\begin{figure}[h]
\begin{minipage}{8cm}
\center{\includegraphics[scale=0.6]{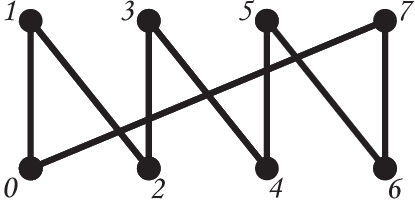}}
\end{minipage}
\begin{minipage}{5cm}
\center{\includegraphics[scale=0.6]{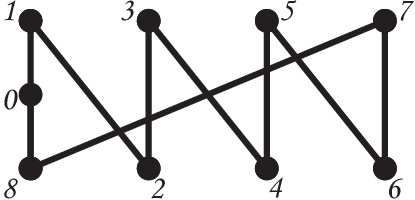}}
\end{minipage}
\caption{$C_8$ and $C_9$.\protect\label{figc89}}
\end{figure}
%\center{\raisebox{-0.5ex}{Fig. 2: Hasse diagram of $X$.}}
%\end{minipage}

The fundamental group of $C_n$, and also of $C_n\times I_m$ is infinite cyclic. As in the first case, the edges of the Hasse diagram of $C_n\times I_m$ are in correspondence with edges of squares in the board and admissibility equals validity of the coloring. Since $\mathbb{Z}_2$ is a quotient of $\mathbb{Z}$, by Theorem \ref{main1}, there exists an admissible and connected $\mathbb{Z}_2$-coloring $c$ of $C_n\times I_m$. The coloring $c$ is connected and therefore it cannot be equivalent to the trivial coloring. In this way we obtain non-equivalent colorings of the board. In the case that $n\ge 5$ is odd, we define $C_n$ again by identifying $0$ and $n$ in $I_n$. It is the poset $0<1>2<\ldots >n-1<0$. Now the height of $C_n$ is two but it still has fundamental group isomorphic to $\mathbb{Z}$. The vertices and edges in the Hasse diagram of $C_n\times I_m$ are still in correspondence with vertices and edges in the cylindrical board. It is still true that validity of a coloring is equivalent to admissibility 
although this is a little harder to see. Therefore, also when $n$ is odd, there are two colorings of the board where one cannot be obtained from the other and they correspond to a connected and a non-connected $\mathbb{Z}_2$-coloring of $C_n\times I_m$. Now, if $c$ is a non-connected admissible $\mathbb{Z}_2$-coloring of a poset $X$, it induces the trivial weight $W=W_c:\hh (X,x_0)\to \mathbb{Z}_2$ and, by the proof of Lemma \ref{arbol}, it is equivalent to the trivial coloring. Hence, two non-connected $\mathbb{Z}_2$-colorings of $C_n\times I_n$ are equivalent. On the other hand, there exists a unique normal subgroup $N\triangleleft \mathbb{Z}$ such that $\mathbb{Z}/ N$ is isomorphic to $\mathbb{Z}_2$. It follows from Theorem \ref{main1} that any two connected admissible $\mathbb{Z}_2$-colorings of $C_n\times I_m$ are equivalent. Finally we deduce that in any three valid colorings of the cylindrical board, there are two such that one can be obtained from the other by a sequence of allowed moves.

Of course these results can be applied in other examples. The analysis of the toric board, obtained by identifying left and right edges of the rectangular board as well as the top and the bottom, is similar to the cylindrical one but considering the poset $C_n\times C_m$. In this case it is not longer true that in any three valid colorings there are two equivalent since there are two different subgroups of $\mathbb{Z}\times \mathbb{Z}$ of index $2$.

\end{document}